\documentclass[11pt]{amsart}

\usepackage{amssymb, amsmath, amsthm}

 \newtheorem{thm}{Theorem}[section]
 \newtheorem{cor}[thm]{Corollary}
 \newtheorem{lem}[thm]{Lemma}
 \newtheorem{prop}[thm]{Proposition}

    \theoremstyle{definition}
 \newtheorem{defn}[thm]{Definition}

 \newtheorem{exam}[thm]{Example}
 \theoremstyle{remark}
 
\numberwithin{equation}{section}
\newcommand{\ldb}{\mathopen{\lbrack\!\lbrack}}
\newcommand{\rdb}{\mathclose{\rbrack\!\rbrack}}

\begin{document}

\title[BiHom-Poisson conformal superalgebras arising from related algebraic structures]{BiHom-Poisson conformal superalgebras arising from related algebraic structures}

\author{Bakolè LOUAMOU, Ibrahima BAKAYOKO and Kol Béatrice GAMOU}

\address{D\'epartement de Math\'ematiques, Université de N'Z\'er\'ekor\'e, BP : 50, N'Z\'er\'ekor\'e, Guinea}
\email{ibrahimabakayoko27@gmail.com}
\address{D\'epartement de Math\'ematiques, Institut Ouest Africain de Mathématiques, BP : 1147, Conakry, Guinea}
 \email{louamoubakole30@gmail.com}
\address{D\'epartement de Math\'ematiques, Université de N'Z\'er\'ekor\'e, BP : 50, N'Z\'er\'ekor\'e, Guinea}
\email{kolbeatrice18@gmail.com}
% \thanks{}

\subjclass[2020]{}

\keywords{BiHom-superalgebras, conformal algebras, Poisson algebras}

% \date{September 2026.}

%%%-------------------------------------------------------------------

\begin{abstract}
Inspired from some works on constructions of algebraic structures from other ones, we purpose to give some construction theorems on BiHom-Poisson conformal superalgebras which unify the notion of the $\mathbb{Z}_2$-graded Poisson algebras, the  BiHom-Poisson algebras and their conformal setting. So, we introduce  BiHom-Poisson conformal superalgebras and  show that they are closed under twisting. We then prouve that any regular BiHom-associative conformal algebra leads
to  BiHom-Poisson conformal algebra via the commutator bracket. Then, we point out that any  BiHom-Poisson conformal superalgebra together
with Rota-Baxter operator give rises to another  BiHom-Poisson conformal superalgebra. Next, we show that tensor product of any BiHom-associative superalgebra and any  BiHom-Poisson conformal superalgebra is also a  BiHom-Poisson conformal superalgebra. Contruction from Hom-post-Poisson conformal superalgebras is also given.
\end{abstract}

%%% ----------------------------------------------------------------------
\maketitle
%%% ----------------------------------------------------------------------

\section{Introduction}
A Poisson algebra is both an associative algebra and a Lie algebra together with a compatible relation called Leibniz rule. They are used in many fields in mathematics and physics. In mathematics they play a fundamental role in Poisson geometry, quantum groups, and deformation
of commutative associative algebras. In physics, they are a major part of deformation quantization, Hamiltonian mechanics, and topological field theories. Poisson-like structures are also used in the study of vertex operator algebras.

Hom-algebraic structures were appeared for the first time in the works of Aizawa N. and Sato H. as a generalization of Lie algebras. Interesting Hom-type algebraic structures of many classical structures were studied as G-Hom-associative algebras \cite{MA}, Hom-alternative algebras, Hom-Malcev algebras, Hom-Poisson algebras, constructions of n-Lie algebras, Hom-Nambu-Lie algebras and Hom-Jordan algebras. However, Post-Lie algebras first arise from the work of Bruno Vallettein 2007 through the purely operadic technique of Koszul dualization.

In 2015, G. Graziani, A. Makhlouf, C. Menini and F. Panaite introduced BiHom-algebraic structures  from a categorical approach as an extension of the class of Hom-algebras. Since then, other interesting BiHom-type algebraic structures of many Hom-algebraic structures has been intensively studied as BiHom-Lie algebras structures, Representations of BiHom-Lie algebras, BiHom-Leibniz algebras, BiHom pre-Lie algebras  \cite{LL}, infinitesimal Hom-bialgebras and the associative (Bi)Hom-Yang-Baxter equations, The construction and deformation of BiHom-Novikov algebras, On n-ary Generalization of BiHom-Lie algebras and BiHom-associative algebras, Rota-Baxter operators on BiHom-associative algebras and related structures \cite{L}.

BiHom-algebraic structures  have recently become popular, due in part to the prospect of having a
general framework in which one can produce many types of natural deformations of algebras. These algebras has been extended to the graded case, among which one can cite, in the case the abelian group giving the graduation is $\mathbb{Z}_2$,  Hom-Lie superalgebras, Hom-Lie admissible superalgebras, Rota-Baxter operator on pre-Lie superalgebras, Hom-Novikov superalgebras, Hom-alternative superalgebras, Hom-Malcev superalgebras, Hom-Jordan
superalgebras and beyond.

Among other results, H. Adimi and his coauthers introduce and give some construction results of BiHom-post-Lie algebras \cite{H} which are a generalization of both post-Lie-algebras and Hom-post-Lie algebras. Their conformal version are studied in \cite{LY}.
% \medskip
J. Liu, H. Zhou recall the notion of Poisson conformal algebras \cite{J}
and describe some constructions of them. Then, they study the formal distribution and introduce conformal formal deformations of commutative associative conformal algebras and show
that Poisson conformal algebras are the corresponding semi-classical limits. Then, they develop
the cohomology theory of noncommutative Poisson conformal algebras and use this cohomology
to study their deformations.
While
BiHom-
Poisson conformal algebra are introduced by S. Asif, Y. Wang in \cite{S} as generalization of both BiHom-associative conformal algebras and BiHom-Lie conformal algebras \cite{SG, T}.  They give some  constructions of these structures including
direct product and twisting. See \cite{SA} for Hom-Leibniz conformal algebras.

The aim of this paper is to give some constructions of BiHom-Poisson conformal superalgebras.
In section two, we recall Preliminaries and some results BiHom-associative conformal superalgebras, BiHom-Lie conformal superalgebras, BiHom-post-Lie conformal superalgebras and BiHom-tridendriform conformal superalgebras.
Section three contains the mains results, that is constructions of BiHom-Poisson conformal superalgebras from twisting, from regular BiHom-associative conformal algebra, from Rota-Baxter operator, from averaging operators, from tensor product of BiHom-associative conformal superalgebra and  BiHom-Poisson conformal superalgebra and from  Hom-Poisson conformal superalgebras.

\section{Preliminaries and some results}
\subsection{Generalities}
\begin{defn}
 A  BiHom  conformal superalgebra is a quadruple $(A, \circ_\lambda, \varphi, \psi)$, where $A$ is a $\bf Z_2$-graded vector space endowed with  a structure of $\mathbb{C}[\partial]$-module compatible with the parity, $\varphi, \psi : A\rightarrow A$ are two commuting even linear maps, $\circ_\lambda$ is a bilinear map $A\times A\rightarrow A[\lambda], x\otimes y\mapsto x\circ_\lambda y$ map satisfying the sesquilinearity relations :
 \begin{eqnarray}
\partial x\circ_\lambda y=-\lambda x\circ_\lambda y, \;
x\circ_\lambda\partial y=(\partial+\lambda)x\circ_\lambda y, \; \partial\varphi=\varphi\partial,\quad \partial\psi=\psi\partial,\; \forall x, y\in A.
 \end{eqnarray}
 The linear maps $\varphi$ and $\psi$ are called either twisted maps or structural maps.
 \end{defn}

A conformal BiHom-superalgebra is said to be commutative if
\begin{eqnarray}
 x\circ_\lambda y=(-1)^{|x||y|}y\circ_{-\partial-\lambda}x,\quad \forall x, y\in \mathcal{H}(A).
\end{eqnarray}

\begin{defn}
A morphism of BiHom  conformal superalgebras is an even linear $f : A\rightarrow A'$  map which commutes with $\varphi$ and $\psi$, and preserves the structures i.e.
\begin{eqnarray}
\varphi'\circ f=f\circ \varphi,\quad
\psi'\circ f=f\circ \psi,\quad
 f(x\circ_\lambda y)= f(x)\circ'_\lambda f(y), \quad \forall x, y\in A.
\end{eqnarray}
A morphism $f$ of  $A$ into itself is called an endomorphism.
A bijective endomorphism is said to  be an automorphism.
\end{defn}

\begin{defn}
 Given a BiHom  conformal superalgebra $(A, \circ_\lambda, \varphi, \psi)$, we say that
 \begin{enumerate}
  \item [i)] $A$ is multiplicative if the structure maps are endomorphims of $A$.
  \item [ii)] $A$ is regular if the structure maps are automorphisms.
 \end{enumerate}
 \end{defn}

 \begin{defn}
 Given a  BiHom  conformal superalgebra $(A, \circ_\lambda, \varphi, \psi)$, an even $\mathbb{C}[\partial]$-linear map $R : A\rightarrow A$ and $k\in\mathbb{K}$, we say that
$R$ is
a Rota-Baxter operator of weight $k\in\mathbb{K}$ on $A$ if
\begin{eqnarray}
\varphi R&=&R\varphi,\quad \psi R=R\psi,\\
 R(x)\circ_\lambda R(y)&=&R\Big(R(x)\circ_\lambda y+x\circ_\lambda R(y)+k x\circ_\lambda y\Big),
\end{eqnarray}
for all $x, y\in\mathcal{H}(A)$.
 \end{defn}
In this case, the tuple $(A, \circ_\lambda, \varphi, \psi, R)$ is said to be Rota-Baxter BiHom conformal superalgebra.

\begin{defn}
 A BiHom-associative conformal superalgebra is a  BiHom conformal superalgebra $(A, \circ_\lambda, \varphi, \psi)$ such that the following conformal BiHom-associativity holds :
 \begin{eqnarray}
\varphi(x)\circ_\lambda(y\circ_\mu z)=(x\circ_\lambda y)\circ_{\lambda+\mu}\psi(z), \quad \forall x, y, z \in A.
 \end{eqnarray}
\end{defn}

\begin{exam}
 Let $(A, \circ_\lambda)$ be an associative conformal superalgebra and $\varphi, \psi$ two endomorphims of $A$. Then, $(A, \circ_\lambda(\varphi\otimes\psi), \varphi, \psi)$ is a BiHom-associative conformal superalgebra.
\end{exam}

The below important result is proved by direct computation.
\begin{thm}\label{tsa}
 Let $A_{\varphi, \psi}=(A, \circ_\lambda, \varphi, \psi)$ be a BiHom conformal superalgebra and $R$ a Rota-Baxter operator of $A$. Let $\varphi'$ and $\psi'$ be two commuting endomorphims of $A$ that commute with $R$ and $\partial$. Then,
 $$A_{\varphi', \psi'}=(A, \bullet_\lambda=\circ_\lambda(\varphi'\otimes\psi'), \varphi\varphi', \psi\psi')$$
 is a BiHom conformal superalgebra and $R$ is a Rota-Baxter operator on $A_{\varphi', \psi'}$.
 Moreover, if $A_{\varphi, \psi}$ is BiHom-associative, then so is $A_{\varphi', \psi'}$.
\end{thm}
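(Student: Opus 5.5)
The statement has three parts, and I plan to verify each directly from the definitions: sesquilinearity of $\bullet_\lambda$ together with the commutation conditions on the new structure maps; the Rota-Baxter identity for $R$ with respect to $\bullet_\lambda$; and the BiHom-associativity of $\bullet_\lambda$. I will assume, as the phrase ``endomorphisms of $A$'' intends, that $\varphi'$ and $\psi'$ are even $\mathbb{C}[\partial]$-linear maps that are morphisms of $(A,\circ_\lambda)$ and commute with $\varphi$, $\psi$, $R$ and $\partial$. These commutation hypotheses are exactly what will be used.

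\textbf{Structure.} The map $\bullet_\lambda$ is bilinear, with $x\bullet_\lambda y=\varphi'(x)\circ_\lambda\psi'(y)$. Since $\varphi'\partial=\partial\varphi'$, sesquilinearity of $\circ_\lambda$ gives
\[
\partial x\bullet_\lambda y=\partial\varphi'(x)\circ_\lambda\psi'(y)=-\lambda\, x\bullet_\lambda y.
\]
The same argument with $\psi'$ gives $x\bullet_\lambda\partial y=(\partial+\lambda)\,x\bullet_\lambda y$. The maps $\varphi\varphi'$ and $\psi\psi'$ commute with $\partial$ because each factor does. They commute with each other by the pairwise commutation of $\varphi,\psi,\varphi',\psi'$. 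They are even as compositions of even maps.

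\textbf{Rota-Baxter.} The new structure maps commute with $R$ because each factor does. For the main identity I will use $\varphi'R=R\varphi'$ and $\psi'R=R\psi'$ to write
\[
R(x)\bullet_\lambda R(y)=R(\varphi'(x))\circ_\lambda R(\psi'(y)).
\]
Applying the Rota-Baxter identity in $A_{\varphi,\psi}$ to the pair $\varphi'(x),\psi'(y)$ gives
\[
R\bigl(R\varphi'(x)\circ_\lambda\psi'(y)+\varphi'(x)\circ_\lambda R\psi'(y)+k\,\varphi'(x)\circ_\lambda\psi'(y)\bigr).
\]
Commuting $R$ back inside turns this into $R\bigl(R(x)\bullet_\lambda y+x\bullet_\lambda R(y)+k\,x\bullet_\lambda y\bigr)$, as required.

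\textbf{BiHom-associativity.} This is the main step. I will expand both sides. For the left side:
\[
\varphi\varphi'(x)\bullet_\lambda(y\bullet_\mu z)=\varphi'\varphi\varphi'(x)\circ_\lambda\psi'\bigl(\varphi'(y)\circ_\mu\psi'(z)\bigr)=\varphi\varphi'^2(x)\circ_\lambda\bigl(\psi'\varphi'(y)\circ_\mu\psi'^2(z)\bigr).
\]
For the right side:
\[
(x\bullet_\lambda y)\bullet_{\lambda+\mu}\psi\psi'(z)=\bigl(\varphi'^2(x)\circ_\lambda\varphi'\psi'(y)\bigr)\circ_{\lambda+\mu}\psi\psi'^2(z).
\]
Each expansion uses that $\psi'$ (respectively $\varphi'$) is multiplicative, together with the commutation $\varphi'\varphi=\varphi\varphi'$ and $\psi'\psi=\psi\psi'$.

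Now set $X=\varphi'^2(x)$, $Y=\varphi'\psi'(y)$ and $Z=\psi'^2(z)$. Since $\varphi'$ and $\psi'$ commute, $\psi'\varphi'(y)=Y$. The two sides then read $\varphi(X)\circ_\lambda(Y\circ_\mu Z)$ and $(X\circ_\lambda Y)\circ_{\lambda+\mu}\psi(Z)$, which are equal by BiHom-associativity of $A_{\varphi,\psi}$.

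The main obstacle is bookkeeping. I must make sure multiplicativity of $\varphi'$ and $\psi'$ with respect to $\circ_\lambda$, including $\lambda$-compatibility, is really available. I must also check that all the commutations are invoked in the right order. No sign factors arise, since every map involved is even.
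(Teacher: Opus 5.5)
Your proposal is correct and follows the paper's proof essentially verbatim. The Rota--Baxter identity comes from applying the original identity to the pair $\varphi'(x),\psi'(y)$ and commuting $R$ back, and BiHom-associativity reduces, via multiplicativity of $\varphi',\psi'$ and their commutation with $\varphi,\psi$, to the original axiom evaluated at $\varphi'^2(x)$, $\varphi'\psi'(y)$, $\psi'^2(z)$.

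You are in fact more careful than the paper in three places. The paper uses the commutations $\varphi\varphi'=\varphi'\varphi$ and $\psi\psi'=\psi'\psi$ silently; they are built into its definition of endomorphism. It also omits the sesquilinearity check and the check that $R$ commutes with the new structure maps $\varphi\varphi'$ and $\psi\psi'$.
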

\begin{proof}
It is very easy to prove the sesquilinearity. Let us prove that $R$ is a Rota-Baxter operator on $A_{\varphi', \psi'}$.
We have for any $x, y, z \in \mathcal{H}(A)$,
\begin{eqnarray}
 R(x)\bullet_\lambda R(y)&=&\varphi'(R(x))\circ_\lambda \psi'(R(y))=
 R\varphi'(x)\circ_\lambda R\psi'(y)\nonumber\\
&=& R\Big(R\varphi'(x)\circ_\lambda\psi'(y)+\varphi'(x)\circ_\lambda R\psi'(y)+k\varphi'(x)\circ_\lambda\psi'(y)\Big)\nonumber\\
&=& R\Big(\varphi'R(x)\circ_\lambda\psi'(y)+\varphi'(x)\circ_\lambda \psi'R(y)+k\varphi'(x)\circ_\lambda\psi'(y)\Big)\nonumber\\
&=& R\Big(R(x)\bullet_\lambda y+x\bullet_\lambda R(y)+k x\bullet_\lambda y\Big)\nonumber.
\end{eqnarray}
Now, for the conformal BiHom-associativity, we have for any $x, y, z \in \mathcal{H}(A)$,
\begin{eqnarray}
 \varphi\varphi'(x)\bullet_\lambda(y\bullet_\mu z)
 &=&\varphi'^2\varphi(x)\circ_\lambda\psi'(\varphi'(y)\circ_\mu\psi'(z))
 =\varphi\varphi'^2(x)\circ_\lambda(\varphi'\psi'(y)\circ_\mu\psi'^2(z))\nonumber\\
&=&(\varphi'^2(x)\circ_\lambda\varphi'\psi'(y))\circ_{\lambda+\mu}\psi\psi'^2(z)
 =\varphi'(\varphi'(x)\circ_\lambda\psi'(y))\circ_{\lambda+\mu}\psi'(\psi\psi'(z))\nonumber\\
&=&(x\bullet_\lambda y)\bullet_{\lambda+\mu}\psi\psi'(z)\nonumber.
\end{eqnarray}
This finishes the proof.
\end{proof}

\begin{prop}\label{rac}
Let $A_\circ=(A, \circ_\lambda, \varphi, \psi, R)$ be a Rota-Baxter BiHom-associative conformal superalgebra. Let us define the even bilinear map, for any $x, y\in \mathcal{H}(A)$,
\begin{eqnarray}
 x\bullet_\lambda y=R(x)\circ_\lambda y+x\circ_\lambda R(y)+k x\circ_\lambda y.
\end{eqnarray}
Then, $A_\bullet=(A, \bullet_\lambda, \varphi, \psi, R)$ is also a Rota-Baxter BiHom-associative conformal superalgebra and $R :A_\bullet\rightarrow A_\circ$ is a morphism.
\end{prop}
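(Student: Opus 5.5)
The proof has four parts, to be done in this order: sesquilinearity of $\bullet_\lambda$, that $R$ is a morphism $A_\bullet\to A_\circ$, that $R$ is a Rota-Baxter operator of weight $k$ on $A_\bullet$, and finally BiHom-associativity of $\bullet_\lambda$. Throughout I use that $R$ is even, $\mathbb{C}[\partial]$-linear and commutes with $\varphi,\psi$.

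\emph{Sesquilinearity.} Since $R\partial=\partial R$, each of the three terms $R(x)\circ_\lambda y$, $x\circ_\lambda R(y)$, $kx\circ_\lambda y$ is sesquilinear in $(x,y)$. For example, $R(\partial x)\circ_\lambda y=\partial R(x)\circ_\lambda y=-\lambda R(x)\circ_\lambda y$. Hence $\bullet_\lambda$ is sesquilinear, and $\varphi,\psi$ still commute with $\partial$ and with each other.

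\emph{Morphism.} The Rota-Baxter identity reads exactly $R(x\bullet_\lambda y)=R(x)\circ_\lambda R(y)$. Together with $R\varphi=\varphi R$ and $R\psi=\psi R$, this says $R$ is a morphism $A_\bullet\to A_\circ$.

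\emph{Rota-Baxter property on $A_\bullet$.} I need
\[
R(x)\bullet_\lambda R(y)=R\big(R(x)\bullet_\lambda y+x\bullet_\lambda R(y)+k\,x\bullet_\lambda y\big).
\]
Apply $\bullet$ with the morphism property to get $R(x)\bullet_\lambda R(y)=R(R(x))\circ_\lambda R(y)+R(x)\circ_\lambda R(R(y))+kR(x)\circ_\lambda R(y)$. Each term is a $\circ$-product of two elements in the image of $R$, so the Rota-Baxter identity for $\circ$ rewrites each as $R$ of something. Using $R(a\bullet_\lambda b)=R(a)\circ_\lambda R(b)$, this gives
\[
R(x)\bullet_\lambda R(y)=R\big(R(x)\bullet_\lambda y+x\bullet_\lambda R(y)+k\,x\bullet_\lambda y\big).
\]

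\emph{BiHom-associativity.} This is the main computation and the main obstacle. I expand
\[
\varphi(x)\bullet_\lambda(y\bullet_\mu z)=R\varphi(x)\circ_\lambda(y\bullet_\mu z)+\varphi(x)\circ_\lambda R(y\bullet_\mu z)+k\,\varphi(x)\circ_\lambda(y\bullet_\mu z).
\]
In the middle term I replace $R(y\bullet_\mu z)$ by $R(y)\circ_\mu R(z)$. In the other two terms I expand $y\bullet_\mu z$ into its three pieces. This yields $1+3+3=7$ terms of the form $\varphi(a)\circ_\lambda(b\circ_\mu c)$, where $a\in\{x,R(x)\}$ and $R\varphi=\varphi R$ puts $\varphi$ outside.

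Symmetrically, I expand $(x\bullet_\lambda y)\bullet_{\lambda+\mu}\psi(z)$. I write $R(x\bullet_\lambda y)=R(x)\circ_\lambda R(y)$ and $R\psi(z)=\psi R(z)$, obtaining 7 terms of the form $(a\circ_\lambda b)\circ_{\lambda+\mu}\psi(c)$.

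Applying BiHom-associativity of $\circ$ to each left-hand term turns both sides into sums of the same shape. I then match them by the pattern of $R$'s and powers of $k$:
\begin{itemize}
\item $R(x),R(y),R(z)$ appears once on each side;
\item each pattern with exactly two $R$'s appears once on each side;
\item each pattern with exactly one $R$ carries a factor $k$ and appears once on each side;
\item $x,y,z$ with no $R$ appears with $k^2$ on each side.
\end{itemize}
The real work is the bookkeeping that these multisets of terms coincide. Signs never enter, since all maps involved are even.
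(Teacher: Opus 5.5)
Your proof is correct and follows essentially the same route as the paper. You expand $\varphi(x)\bullet_\lambda(y\bullet_\mu z)$, replace $R(y\bullet_\mu z)$ and $R(x\bullet_\lambda y)$ by $R(y)\circ_\mu R(z)$ and $R(x)\circ_\lambda R(y)$ via the Rota-Baxter identity, rewrite everything in the form $(a\circ_\lambda b)\circ_{\lambda+\mu}\psi(c)$ using BiHom-associativity of $\circ_\lambda$, and match against $(x\bullet_\lambda y)\bullet_{\lambda+\mu}\psi(z)$. You also explicitly check that $R$ is a Rota-Baxter operator for $\bullet_\lambda$, which the paper leaves implicit.

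There is one small slip in your bookkeeping: no term has $R$ on all three of $x,y,z$, on either side, so your first bullet should be dropped. The seven terms are exactly the three two-$R$ patterns, the three one-$R$ patterns carrying a factor $k$, and the $k^2$ term, which agrees with your own count $1+3+3=7$.
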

\begin{proof}
For any $x, y, z \in \mathcal{H}(A)$, we have
\begin{eqnarray}
 &&\hspace{3cm}\varphi(x)\bullet_\lambda(y\bullet_\mu z)=\nonumber\\
 &&=R\varphi(x)\circ_\lambda(y\bullet_\mu z)+\varphi(x)\circ_\lambda R(y\bullet_\mu z)+\varphi(x)\circ_\lambda(y\bullet_\mu z)\nonumber\\
&&= \varphi(x)\circ_\lambda\Big(R(y)\circ_\mu z+y\circ_\mu R(z)+k y\circ_\mu z\Big)+\varphi(x)\circ_\lambda R\Big((R(y)\circ_\mu z+y\circ_\mu R(z)+k y\circ_\mu z)\Big)\nonumber\\
&&\quad+k\varphi(x)\circ_\lambda\Big(R(y)\circ_\mu z+y\circ_\mu R(z)+k y\circ_\mu z\Big)\nonumber\\
&&=(R(x)\circ_\lambda R(y))\circ_{\lambda+\mu}\psi(z)+(R(x)\circ_\lambda y)\circ_{\lambda+\mu}\psi R(z)+k (R(x)\circ_\lambda y)\circ_{\lambda+\mu}\psi(z)\nonumber\\
 &&\qquad+(x\circ_\lambda R(y))\circ_{\lambda+\mu}\psi R(z))+(x\circ_\lambda y)\circ_{\lambda+\mu}\psi R(z)+k (x\circ_\lambda y)\circ_{\lambda+\mu}\psi(z)\nonumber\\
&&\qquad\qquad+k(x\circ_\lambda R(y))\circ_{\lambda+\mu}\psi(z)+k(x\circ_\lambda y)\circ_{\lambda+\mu}\psi R(z)+k^{2}(x\circ_\lambda y)\circ_{\lambda+\mu}\psi(z)\nonumber\\
&&=R(x\bullet_\lambda y)\circ_{\lambda+\mu}\psi(z)+\Big(R(x)\circ_\lambda y+x\circ_\lambda R(y)+k(x\circ_\lambda y)\Big)\circ_{\lambda+\mu}\psi R(z)\nonumber\\
&&\qquad+k\Big(R(x)\circ_\lambda y+ x\circ_\lambda R(y) +x\circ_\lambda y\Big)\circ_{\lambda+\mu}\psi(z)\nonumber\\
&&=R(x\bullet_\lambda y)\circ_{\lambda+\mu}\psi(z)+(x\bullet_\lambda y)\circ_{\lambda+\mu}R\psi(z)+k(x\bullet_\lambda y)\circ_{\lambda+\mu}\psi(z)\nonumber\\
&&=(x\bullet_\lambda y)\bullet_{\lambda+\mu}\psi(z).\nonumber
 \end{eqnarray}
 The second part is a direct consequence of the definition of the new product.
\end{proof}

\begin{defn}
 A BiHom-Lie conformal superalgebra is a BiHom conformal superalgebra $(L, [-_\lambda-], \varphi, \psi)$ such the bracket $[-_\lambda-]$ is skew-symmetric i.e.
  \begin{eqnarray}
{[\psi(x)\circ_\lambda\varphi(y)]=-(-1)^{|x||y|}[\psi(y)_{-\partial-\lambda}\varphi(x)]},\label{sks}
 \end{eqnarray}
 and satisfies the conformal BiHom-Jacobi identity :
  \begin{eqnarray}
(-1)^{|x||z|}[\psi^{2}(x)_\lambda[\psi(y)_\mu\varphi(z)]]
+(-1)^{|x||y|}[\psi^{2}(y)_\mu[\psi(z)_{-\partial-\lambda}\varphi(x)]]\label{bhj}\\
+(-1)^{|y||z|}[\psi^{2}(z)_{-\partial-\lambda-\mu}[\psi(x)_\lambda\varphi(y)]]=0\nonumber,\hspace{2cm}
 \end{eqnarray}
for any homogeneous elements $x, y, z\in \mathcal{H}(A)$,
\end{defn}

A BiHom-Lie conformal superalgebra $(A, [-_\lambda,-], \varphi, \psi)$ is said to be multiplicative if $\varphi$ and $\psi$ are endomorphims of $A$. Whenever $\varphi=\psi$, we recover Hom-Lie conformal superalgebras.

% \begin{exam}
%  arxiv: 2009.03760v1, Example 2.1-2.4.
% \end{exam}

\begin{prop}(\cite{T}, Proposition 2.2)\\
 Let $(A, [-_\lambda-], \varphi, \psi)$ be a BiHom-Lie conformal superalgebra and $\varphi'$ and $\psi'$ be two commuting endomorphims of $A$ which commute also with the structural maps $\varphi$ and $\psi$. Then,
 $$(A, \ldb-_\lambda-\rdb=[-_\lambda-](\varphi'\otimes\psi'), \varphi\varphi', \psi\psi')$$ is also a BiHom-Lie conformal superalgebra.
\end{prop}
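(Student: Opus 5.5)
We verify the axioms directly, exactly as in the proof of Theorem \ref{tsa}, using $\ldb x_\lambda y\rdb=[\varphi'(x)_\lambda\psi'(y)]$. We tacitly assume, as there, that $\varphi'$ and $\psi'$ commute with $\partial$; being endomorphisms of $A$, they also preserve the bracket, $\varphi'[x_\lambda y]=[\varphi'(x)_\lambda\varphi'(y)]$ and likewise for $\psi'$.

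\emph{Step 1: BiHom conformal structure.} Sesquilinearity of $\ldb-_\lambda-\rdb$ follows from that of $[-_\lambda-]$ together with $\varphi'\partial=\partial\varphi'$ and $\psi'\partial=\partial\psi'$. The new structural maps $\varphi\varphi'$ and $\psi\psi'$ commute with each other and with $\partial$, because all four maps pairwise commute. Parity is preserved since $\varphi'$ and $\psi'$ are even.

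\emph{Step 2: skew-symmetry.} We compute
$$\ldb \psi\psi'(x)_\lambda\varphi\varphi'(y)\rdb=[\psi(\varphi'\psi'(x))_\lambda\varphi(\varphi'\psi'(y))].$$
Apply (\ref{sks}) to the elements $\varphi'\psi'(x)$ and $\varphi'\psi'(y)$, which have the same parities as $x$ and $y$. This gives
$$-(-1)^{|x||y|}[\psi\varphi'\psi'(y)_{-\partial-\lambda}\varphi\varphi'\psi'(x)],$$
and reading the same identity backwards shows this equals $-(-1)^{|x||y|}\ldb\psi\psi'(y)_{-\partial-\lambda}\varphi\varphi'(x)\rdb$.

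\emph{Step 3: Jacobi identity (the main bookkeeping obstacle).} First expand the typical term:
$$\ldb(\psi\psi')^2(x)_\lambda\ldb\psi\psi'(y)_\mu\varphi\varphi'(z)\rdb\rdb=[\varphi'\psi^2\psi'^2(x)_\lambda\psi'[\varphi'\psi\psi'(y)_\mu\varphi\varphi'\psi'(z)]].$$
Next push $\psi'$ inside the inner bracket using multiplicativity. The inner bracket becomes $[\psi(\varphi'\psi'^2 y)_\mu\varphi(\varphi'\psi'^2 z)]$, and the outer left entry is $\psi^2(\varphi'\psi'^2 x)$.

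Setting $X=\varphi'\psi'^2(x)$, $Y=\varphi'\psi'^2(y)$ and $Z=\varphi'\psi'^2(z)$, the term becomes $[\psi^2(X)_\lambda[\psi(Y)_\mu\varphi(Z)]]$. The same substitution, with the same $X,Y,Z$, handles the cyclic terms. When the second spectral parameter is $-\partial-\lambda$ or $-\partial-\lambda-\mu$, commuting $\varphi'$ and $\psi'$ past $\partial$ is exactly what keeps the substitution valid. Hence the twisted Jacobi sum equals the original Jacobi sum (\ref{bhj}) evaluated at $X,Y,Z$, which vanishes.

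The only delicate point is checking that the twisting maps land symmetrically on all three arguments, so that a single triple $(X,Y,Z)$ works for every cyclic term. This is where commutation of $\varphi',\psi'$ with $\varphi$, $\psi$ and $\partial$ is used.
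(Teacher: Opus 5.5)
Your proposal is correct and matches the approach the paper uses for its twisting results. The paper only cites \cite{T} for this statement, but its proofs of Theorem~\ref{tsa} and Theorem~\ref{tw1} are the same direct verification. Skew-symmetry reduces to the original identity at $\varphi'\psi'(x),\varphi'\psi'(y)$, and the twisted Jacobi sum is the original one at $\varphi'\psi'^2(x),\varphi'\psi'^2(y),\varphi'\psi'^2(z)$.

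You were right to make explicit that $\varphi',\psi'$ must commute with $\partial$. That is needed for the new structure maps to commute with $\partial$, and for pushing $\psi'$ through the brackets with spectral parameter $-\partial-\lambda$. Note also that the Jacobi step only uses that $\psi'$ preserves the bracket.
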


\begin{prop}(\cite{T}, Proposition 2.3)\\
 Let $(A, [-_\lambda,-], \varphi, \psi)$ and $(A', [-_\lambda,-]', \varphi', \psi')$ be two BiHom-Lie conformal superalgebras. Then,
 $A\oplus A'$ is a BiHom-Lie conformal superalgebra with :
 \begin{eqnarray}
 (\varphi+\varphi')(x+x')&:=&\varphi(x)+\varphi'(x'),\\
 (\psi+\psi')(x+x')&:=&\psi(x)+\psi'(x'),\\
  \ldb(x+x')_\lambda(y+y')\rdb&:=&[x_\lambda y]+[x'_\lambda y']',
 \end{eqnarray}
for any $x, y\in A, x', y'\in A$.
\end{prop}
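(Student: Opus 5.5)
The direct sum $A\oplus A'$ is a $\mathbb{Z}_2$-graded $\mathbb{C}[\partial]$-module with the componentwise action $\partial(x+x')=\partial x+\partial x'$ and grading $(A\oplus A')_i=A_i\oplus A'_i$. The plan is to check each axiom of a BiHom-Lie conformal superalgebra componentwise. The key observation is that the new bracket, applied to two elements, never mixes components: $\ldb(x+x')_\lambda(y+y')\rdb=[x_\lambda y]+[x'_\lambda y']'$, and the structure maps $\varphi+\varphi'$ and $\psi+\psi'$ preserve each summand. Every identity on $A\oplus A'$ should therefore split into the identity on $A$ plus the identity on $A'$. Homogeneous elements of $A\oplus A'$ are sums $x+x'$ with $|x|=|x'|$, so the sign factors $(-1)^{|x||y|}$ agree in both components.

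First, I verify the basic structure. The maps $\varphi+\varphi'$ and $\psi+\psi'$ are even and linear, they commute with each other because $\varphi\psi=\psi\varphi$ and $\varphi'\psi'=\psi'\varphi'$, and they commute with $\partial$ componentwise. Sesquilinearity also holds componentwise:
\[
\ldb\partial(x+x')_\lambda(y+y')\rdb=[\partial x_\lambda y]+[\partial x'_\lambda y']'=-\lambda\ldb(x+x')_\lambda(y+y')\rdb,
\]
and the right sesquilinearity is handled the same way, using that $\partial$ acts diagonally on $A\oplus A'$.

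Next comes skew-symmetry (\ref{sks}):
\[
\ldb(\psi+\psi')(x+x')_\lambda(\varphi+\varphi')(y+y')\rdb=[\psi(x)_\lambda\varphi(y)]+[\psi'(x')_\lambda\varphi'(y')]'.
\]
Applying (\ref{sks}) in $A$ and in $A'$ separately gives $-(-1)^{|x||y|}$ times the corresponding expression with $-\partial-\lambda$. Here $\partial$ acts componentwise, so the two pieces recombine into the bracket on the direct sum.

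Finally, for the BiHom-Jacobi identity (\ref{bhj}), I expand each nested term, for example
\[
\ldb(\psi+\psi')^2(x+x')_\lambda\ldb(\psi+\psi')(y+y')_\mu(\varphi+\varphi')(z+z')\rdb\rdb=[\psi^2(x)_\lambda[\psi(y)_\mu\varphi(z)]]+[\psi'^2(x')_\lambda[\psi'(y')_\mu\varphi'(z')]']'.
\]
The cyclic sum then splits as the Jacobi sum in $A$ plus the Jacobi sum in $A'$, and each vanishes by hypothesis.

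The only point needing care is bookkeeping rather than a real obstacle. One must make sure the $-\partial-\lambda$ substitutions in the skew-symmetry and Jacobi identities are interpreted with $\partial$ acting diagonally on $A\oplus A'$. One must also restrict to homogeneous $x+x'$ so that a single parity can be used in the signs. With these two points fixed, the verification is purely componentwise, and the result follows.
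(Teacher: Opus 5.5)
Your proposal is correct. The paper gives no proof of its own here (the result is simply quoted as Proposition 2.3 of the cited work), and your componentwise verification is the standard argument. That verification takes $\partial$ acting diagonally and the grading $(A\oplus A')_i=A_i\oplus A'_i$, so that homogeneous $x+x'$ have $|x|=|x'|$, and then splits each axiom into the corresponding axiom in $A$ and in $A'$. It is also the argument implicitly behind the paper's later direct-sum theorem for BiHom-Poisson conformal superalgebras.
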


\begin{prop}(\cite{T}, Proposition 2.4)\label{alcsa}\\
If $(A, \circ_\lambda, \varphi, \psi)$ is a regular BiHom-associative conformal superalgebra, then
$$(A, [-_\lambda -], \varphi, \psi)$$
is a regular BiHom-Lie conformal superalgebra, where
\begin{eqnarray}
 \ldb x_\lambda y\rdb=x\circ_\lambda y-(-1)^{|x||y|}\varphi^{-1}\psi(y)\circ_{-\lambda-\partial}\varphi\psi^{-1}(x),\nonumber
\end{eqnarray}
for all $x, y\in\mathcal{H}(A)$.
\end{prop}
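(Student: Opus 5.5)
We verify the BiHom-Lie conformal axioms for $\ldb x_\lambda y\rdb=x\circ_\lambda y-(-1)^{|x||y|}\varphi^{-1}\psi(y)\circ_{-\lambda-\partial}\varphi\psi^{-1}(x)$ one at a time: sesquilinearity and compatibility of $\varphi,\psi$, then skew-symmetry (\ref{sks}), then the conformal BiHom-Jacobi identity (\ref{bhj}).

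\emph{Sesquilinearity.} The first term inherits sesquilinearity from $\circ_\lambda$. For the second term we use that $\varphi^{\pm1},\psi^{\pm1}$ commute with $\partial$ together with the sesquilinearity of $\circ$ after the substitution $\lambda\mapsto-\lambda-\partial$. For instance, with $\partial x$ in the first slot the second term becomes $\varphi^{-1}\psi(y)\circ_{-\lambda-\partial}\partial\varphi\psi^{-1}(x)$. Using $a\circ_\nu\partial b=(\partial+\nu)a\circ_\nu b$ with $\nu=-\lambda-\partial$, this gives the factor $-\lambda$, as required. Regularity also shows that $\varphi,\psi$ remain automorphisms for the bracket, since they commute with each other, with $\partial$ and with $\circ$.

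\emph{Skew-symmetry.} We substitute $\psi(x),\varphi(y)$:
$$\ldb\psi(x)_\lambda\varphi(y)\rdb=\psi(x)\circ_\lambda\varphi(y)-(-1)^{|x||y|}\psi(y)\circ_{-\lambda-\partial}\varphi(x).$$
Replacing $\lambda$ by $-\lambda-\partial$ and exchanging $x,y$ produces exactly the negative of this expression, with the sign $(-1)^{|x||y|}$. This is the standard argument, using that $\partial$ acting on the output of a $\lambda$-product is interpreted consistently.

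\emph{BiHom-Jacobi identity.} This is the main step. We write $X=\psi^2(x)$, $Y=\psi(y)$, $Z=\varphi(z)$ and expand each of the three cyclic terms of (\ref{bhj}) into four $\circ$-products, giving twelve terms in total. In each term we use $\varphi\psi=\psi\varphi$ and the invertibility of the maps to rewrite the arguments so that every triple product has the shape $\varphi(a)\circ_\lambda(b\circ_\mu c)$ or $(a\circ_\lambda b)\circ_{\lambda+\mu}\psi(c)$. BiHom-associativity then converts every left-nested product into a right-nested one, and the twelve terms should cancel in pairs: six pairs, each consisting of the same element ordering of $x,y,z$ and matching signs. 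For example, the term $\psi^2(x)\circ_\lambda(\psi(y)\circ_\mu\varphi(z))$ cancels against $(\psi(x)\circ_\lambda\varphi(y))\circ_{\lambda+\mu}\psi\varphi\psi^{-1}\cdots$, which comes from the third cyclic term after applying associativity.

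The main obstacle is the bookkeeping in this step. The spectral parameters take the forms $-\partial-\lambda$ and $-\partial-\lambda-\mu$, and moving $\partial$ through nested products needs the identities
$$a\circ_\lambda(b\circ_{-\partial-\mu}c)=a\circ_\lambda(b\circ_{-\lambda-\partial-\mu}c)$$
(with $\partial$ now acting on the whole result) and the corresponding identity for the outer products. We would first check the identity in the non-super case with $\varphi=\psi=\mathrm{id}$ to pin down the pairing. We would then insert the twisting maps, and finally track the Koszul signs using $(-1)^{|x||z|}$ and similar factors. Alternatively, one could reduce to the classical case by untwisting via the known associative/Lie correspondence, but the direct pairing computation is the plan.
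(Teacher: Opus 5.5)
Your plan is correct and is the standard proof. The paper gives no argument of its own here; it only quotes \cite{T}. Your checks of sesquilinearity, of the automorphism property of $\varphi,\psi$, and of skew-symmetry, together with the expand-into-twelve-terms-and-cancel-in-six-pairs argument for the BiHom-Jacobi identity, follow the same strategy the paper uses for Theorem \ref{cla}, of which this proposition is the zero-bracket special case.

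Your sample pairing is misstated, however. The partner of $(-1)^{|x||z|}\psi^2(x)\circ_\lambda(\psi(y)\circ_\mu\varphi(z))$ is $-(-1)^{|x||z|}\bigl(\varphi^{-1}\psi^2(x)\circ_\lambda\psi(y)\bigr)\circ_{\lambda+\mu}\varphi\psi(z)$. It comes from the summand $\varphi^{-1}\psi(\cdot)\circ_{\lambda+\mu}\varphi\psi(z)$ of the third cyclic term, using multiplicativity of $\varphi^{-1}\psi$ and the substitution $-(-\partial-\lambda-\mu)-\partial=\lambda+\mu$. BiHom-associativity then identifies the two terms, so they cancel.
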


\begin{prop}\label{rlc}
 Let $(L, [-_\lambda-], \varphi, \psi )$ be a BiHom-Lie conformal superalgebra and $R : L\rightarrow L$ be a Rota-Baxter operator of weight
 $k\in\mathbb{K}$ on $L$. Then,
 $$L_{k, R}=(L, \ldb-_\lambda-\rdb, \varphi, \psi) $$ is again a  BiHom-Lie conformal superalgebra with
\begin{eqnarray}
\ldb x_\lambda y\rdb = [R(x)_\lambda y] + [x_\lambda R(y)] + k[x_\lambda y],
\end{eqnarray}
for all $x, y\in\mathcal{H}(L)$.\\
Moreover, $R$ is a morphism of  $(L, \ldb-_\lambda-\rdb, \varphi, \psi )$
 into $(L, [-_\lambda-], \varphi, \psi )$.
\end{prop}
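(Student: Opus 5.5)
We prove Proposition \ref{rlc} by direct computation, in the spirit of Proposition \ref{rac}, using only that $R$ is even, $\mathbb{C}[\partial]$-linear, commutes with $\varphi,\psi$, and satisfies the Rota-Baxter identity for the bracket:
\[
[R(x)_\lambda R(y)]=R\big([R(x)_\lambda y]+[x_\lambda R(y)]+k[x_\lambda y]\big).
\]
In the new notation this identity reads $[R(x)_\lambda R(y)]=R\ldb x_\lambda y\rdb$, which is exactly the statement that $R:(L,\ldb-_\lambda-\rdb,\varphi,\psi)\to(L,[-_\lambda-],\varphi,\psi)$ preserves brackets. Since $R$ is even and commutes with $\varphi,\psi$, this already gives the "moreover" part, and we record it first because it drives the Jacobi computation.

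Sesquilinearity of $\ldb-_\lambda-\rdb$ is immediate: each of the three terms is sesquilinear, because $R$ commutes with $\partial$. For skew-symmetry we write out
\[
\ldb\psi(x)_\lambda\varphi(y)\rdb=[R\psi(x)_\lambda\varphi(y)]+[\psi(x)_\lambda R\varphi(y)]+k[\psi(x)_\lambda\varphi(y)].
\]
We then use $R\psi=\psi R$ and $R\varphi=\varphi R$, and apply \eqref{sks} termwise, to elements $R(x),y$, then $x,R(y)$, then $x,y$; parity is unchanged because $R$ is even. Summing the three results gives $-(-1)^{|x||y|}\ldb\psi(y)_{-\partial-\lambda}\varphi(x)\rdb$.

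The main work is the BiHom-Jacobi identity \eqref{bhj} for $\ldb-_\lambda-\rdb$. We expand the inner bracket first:
\[
\ldb\psi(y)_\mu\varphi(z)\rdb=[\psi R(y)_\mu\varphi(z)]+[\psi(y)_\mu\varphi R(z)]+k[\psi(y)_\mu\varphi(z)].
\]
Then we expand the outer bracket, using $R\ldb u_\mu v\rdb=[R(u)_\mu R(v)]$ on the term $[\psi^2(x)_\lambda R\ldb\cdots\rdb]$. For each cyclic term this produces:
\begin{itemize}
\item one term with all three arguments carrying $R$, namely $[\psi^2R(x)_\lambda[\psi R(y)_\mu\varphi R(z)]]$;
\item terms with exactly two $R$'s, each coming with a factor $1$ or $k$;
\item terms with exactly one $R$, with factor $k$ or $k^2$;
\item terms with no $R$, with factor $k^2$.
\end{itemize}
Concretely, $\ldb\psi^2(x)_\lambda\ldb\psi(y)_\mu\varphi(z)\rdb\rdb$ equals
\[
[\psi^2R x_\lambda\ldb\cdot\rdb]+[\psi^2x_\lambda[\psi Ry_\mu\varphi Rz]]+k[\psi^2x_\lambda\ldb\cdot\rdb].
\]
We then group the three cyclic summands of \eqref{bhj} by the pattern of which of $x,y,z$ carry $R$. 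Each group is exactly the original Jacobi identity \eqref{bhj} applied to a triple such as $(R x,R y,z)$, $(R x,y,R z)$ or $(x,y,z)$, multiplied by the appropriate power of $k$. Hence each group vanishes.

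The bookkeeping is the main obstacle: every group must contain one term from each cyclic position, with consistent signs and $\lambda,\mu,-\partial-\lambda-\mu$ arguments. We check this by organizing by $R$-pattern:
\begin{itemize}
\item the pattern $(R,R,R)$ has coefficient $1$;
\item each two-$R$ pattern has coefficient $1$, and $k$ arises only through $k\cdot(\text{two-}R)$ from the first and third outer terms;
\item each one-$R$ pattern has coefficient $2k$ overall;
\item the no-$R$ pattern has coefficient $k^2$.
\end{itemize}
A careful count verifies that the coefficients agree across the three cyclic positions. The signs $(-1)^{|x||z|}$ and so on are unaffected since $R$ is even, and $R$ passes through $\psi,\varphi$ freely. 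This completes the proof.
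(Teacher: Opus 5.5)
Your route is the same as the paper's. You check sesquilinearity and skew-symmetry termwise, and you read the Rota--Baxter identity as $R\ldb x_\lambda y\rdb=[R(x)_\lambda R(y)]$, which gives the morphism claim. For \eqref{bhj} you expand, apply that identity to the middle term, and regroup into Jacobi identities of $[-_\lambda-]$. Those parts are fine.

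The problem is the bookkeeping that you yourself single out as the main obstacle: the table you state is wrong.
\begin{itemize}
\item There is no $(R,R,R)$ term. The outer $R$ on $\psi^2(x)$ only meets $\ldb\psi(y)_\mu\varphi(z)\rdb$, which carries at most one $R$. The middle term $[\psi^2(x)_\lambda[\psi R(y)_\mu\varphi R(z)]]$ has no $R$ on $x$.
\item There are no ``$k\cdot(\text{two-}R)$'' terms.
\item Each one-$R$ pattern has coefficient $k$, not $2k$.
\end{itemize}
The correct expansion has seven terms:
\begin{align*}
\ldb\psi^2(x)_\lambda\ldb\psi(y)_\mu\varphi(z)\rdb\rdb
&=[\psi^2R(x)_\lambda[\psi R(y)_\mu\varphi(z)]]+[\psi^2R(x)_\lambda[\psi(y)_\mu\varphi R(z)]]\\
&\quad+[\psi^2(x)_\lambda[\psi R(y)_\mu\varphi R(z)]]+k[\psi^2R(x)_\lambda[\psi(y)_\mu\varphi(z)]]\\
&\quad+k[\psi^2(x)_\lambda[\psi R(y)_\mu\varphi(z)]]+k[\psi^2(x)_\lambda[\psi(y)_\mu\varphi R(z)]]\\
&\quad+k^2[\psi^2(x)_\lambda[\psi(y)_\mu\varphi(z)]].
\end{align*}
So your ``careful count'', as written, asserts a false identity for each cyclic term.

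The strategy still goes through. What must actually be checked is that each pattern's coefficient is invariant under cyclic permutation of the pattern. It is: $1$ on the three two-$R$ patterns, $k$ on the three one-$R$ patterns, and $k^2$ on the empty one. Write $J(a,b,c)$ for the left-hand side of \eqref{bhj} for $[-_\lambda-]$, and use $R\varphi=\varphi R$, $R\psi=\psi R$ and $|R(a)|=|a|$. Then the cyclic sum for $\ldb-_\lambda-\rdb$ equals
\begin{align*}
&J(R(x),R(y),z)+J(R(x),y,R(z))+J(x,R(y),R(z))\\
&\quad+k\big(J(R(x),y,z)+J(x,R(y),z)+J(x,y,R(z))\big)+k^2J(x,y,z),
\end{align*}
which vanishes. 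Your error does not reach the conclusion only because a spurious $(R,R,R)$ group would itself be $J(R(x),R(y),R(z))=0$. You should still state the expansion correctly, since this count is the whole content of the Jacobi step.
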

\begin{proof}
We omit the proof of conformal sesquilinearity. For the conformal skew-symmetry, we have :
\begin{eqnarray}
 \ldb \psi(x)_\lambda \varphi(y)\rdb &=&[R\psi(x)_\lambda \varphi(y)]+[\psi(x)_\lambda R\varphi(y)]+k[\psi(x)_\lambda \varphi(y)]\nonumber\\
 &=&[\psi R(x)_\lambda \varphi(y)]+[\psi(x)_\lambda \varphi R(y)]+k[\psi(x)_\lambda \varphi(y)]\nonumber\\
 &=&-(-1)^{|x||y|}\Big([\psi (y)_{-\lambda-\partial} \varphi R(x)]+[\psi R(y)_{-\lambda-\partial} \varphi (x)]+k[\psi(y)_{-\lambda-\partial} \varphi(x)]\Big)\nonumber\\
  &=&-(-1)^{|x||y|}\Big([\psi (y)_{-\lambda-\partial} R\varphi(x)]+[R\psi (y)_{-\lambda-\partial} \varphi (x)]+k[\psi(y)_{-\lambda-\partial} \varphi(x)]\Big)\nonumber\\
  &=&-(-1)^{|x||y|}\ldb \psi(y)_{-\lambda-\partial} \varphi(x)\rdb.\nonumber
\end{eqnarray}

For the conformal BiHom-Jacobi identity, we have for any $x, y, z\in\mathcal{H}(L)$,
\begin{eqnarray}
 &&\hspace{3cm}\ldb \psi^2(x)_\lambda \ldb\psi(y)_\mu \varphi(z)\rdb\rdb=\nonumber\\
 &&= \ldb \psi^2(x)_\lambda [R\psi(y)_\mu \varphi(z)]+[\psi(y)_\mu R\varphi(z)]+k[\psi(y)_\mu \varphi(z)]\rdb\nonumber\\
 &&= [ R\psi^2(x)_\lambda [R\psi(y)_\mu \varphi(z)]+[\psi(y)_\mu R\varphi(z)]+k[\psi(y)_\mu \varphi(z)]]\nonumber\\
 &&\quad+[\psi^2(x)_\lambda R\Big([R\psi(y)_\mu \varphi(z)]+[\psi(y)_\mu R\varphi(z)]+k[\psi(y)_\mu \varphi(z)]\Big)]\nonumber\\
 &&\qquad+k[\psi^2(x)_\lambda [R\psi(y)_\mu \varphi(z)]
 +[\psi(y)_\mu R\varphi(z)]+k[\psi(y)_\mu \varphi(z)]]\nonumber\\
 &&=[ \psi^2R(x)_\lambda [\psi R(y)_\mu \varphi(z)]]+[ \psi^2R(x)_\lambda [\psi(y)_\mu \varphi R(z)]]+k[ \psi^2R(x)_\lambda [\psi(y)_\mu \varphi(z)]]\nonumber\\
 &&\quad+[\psi^2(x)_\lambda [\psi R(y)_\mu \varphi R(z)]]\nonumber\\
 &&\qquad+k[\psi^2(x)_\lambda [\psi R(y)_\mu \varphi(z)]]
 +k[\psi^2(x)_\lambda [\psi(y)_\mu \varphi R(z)]]+k^2k[\psi^2(x)_\lambda [\psi(y)_\mu \varphi(z)]]\nonumber.
\end{eqnarray}
Similarly,
\begin{eqnarray}
 &&\hspace{3cm}\ldb \psi^2(y)_\mu \ldb\psi(z)_{-\lambda-\partial} \varphi(x)\rdb\rdb=\nonumber\\
 &&=[ \psi^2R(y)_\mu [\psi R(z)_{-\lambda-\partial} \varphi(x)]]+[ \psi^2R(y)_\mu [\psi(z)_{-\lambda-\partial} \varphi R(x)]]+k[ \psi^2R(y)_\mu [\psi(z)_{-\lambda-\partial} \varphi(x)]]\nonumber\\
 &&\quad+[\psi^2(y)_\mu [\psi R(z)_{-\lambda-\partial} \varphi R(x)]]\nonumber\\
 &&\qquad+k[\psi^2(y)_\mu [\psi R(z)_{-\lambda-\partial} \varphi(x)]]
 +k[\psi^2(y)_\mu [\psi(z)_{-\lambda-\partial} \varphi R(x)]]+k^2[\psi^2(y)_\mu [\psi(z)_{-\lambda-\partial} \varphi(x)]].\nonumber
\end{eqnarray}
and
\begin{eqnarray}
 &&\hspace{3cm}\ldb \psi^2(z)_{-\lambda-\mu-\partial} \ldb\psi(x)_{\lambda} \varphi(y)\rdb\rdb=\nonumber\\
 &&=[ \psi^2R(z)_{-\lambda-\mu-\partial} [\psi R(x)_{\lambda} \varphi(y)]]+[ \psi^2R(z)_{-\lambda-\mu-\partial} [\psi(x)_{\lambda} \varphi R(y)]]+k[ \psi^2R(z)_{-\lambda-\mu-\partial} [\psi(x)_{\lambda} \varphi(y)]]\nonumber\\
 &&\quad+[\psi^2(z)_{-\lambda-\mu-\partial} [\psi R(x)_{\lambda} \varphi R(y)]]\nonumber\\
 &&\qquad+k[\psi^2(z)_{-\lambda-\mu-\partial} [\psi R(x)_{\lambda} \varphi(y)]]
 +k[\psi^2(z)_{-\lambda-\mu-\partial} [\psi(x)_{\lambda} \varphi R(y)]]+k^2[\psi^2(z)_{-\lambda-\mu-\partial} [\psi(x)_{\lambda} \varphi(y)]]\nonumber
\end{eqnarray}
Thus, the conformal BiHom-Jacobi identity of the bracket $\ldb-_\lambda-\rdb$ follows from that of $[-_\lambda-]$.
 \end{proof}

% \begin{defn}
%  Left, right BiHom Lie
% \end{defn}

\subsection{BiHom-post-Lie conformal superalgebras}
 \begin{defn}\label{hplad}
 A BiHom-post-Lie conformal superalgebra $(L, [-_\lambda -], \circ_\lambda, \varphi, \psi)$ is a multiplicative BiHom-Lie conformal  superalgebra  $(L, [-_\lambda -], \varphi, \psi)$ and a multiplicative BiHom conformal superalgebra $(L, \circ_\lambda, \varphi, \psi)$ such that
\begin{eqnarray}
 &&\quad\varphi\psi(x)\circ_\lambda [y_\mu z]-[(\psi(x)\circ_\lambda y)_{\lambda+\mu} \psi(z)]-(-1)^{|x||y|}[\psi(y)_\mu (\varphi(x)\circ_\lambda z)]=0, \label{pl4}\\
&&as_{\varphi, \psi}(\psi(x), \varphi(y), z)-(-1)^{|x||y|}as_{\varphi, \psi}(\psi(y), \varphi(x), z)=[\psi(x)_\lambda\varphi(y)]\circ_{\lambda+\mu}\psi(z),\label{pl3}
\end{eqnarray}
where $as_{\varphi, \psi}(x, y, z)=\varphi(x)\circ_\lambda(y\circ_\mu z)-(x\circ_\lambda y)\circ_{\lambda+\mu} \psi(z)$,
for any $x, y, z\in\mathcal{H}(L)$.
\end{defn}

\begin{defn}
A morphism of BiHom-post-Lie  conformal superalgebras is an even $\mathbb{C}$-linear map $f : A\rightarrow A'$  such that, for all $x, y\in A$ :
\begin{eqnarray}
\varphi'\circ f&=&f\circ \varphi,\quad
\psi'\circ f=f\circ \psi,\\
 f([x_\lambda y])&=& [f(x)_\lambda f(y)]',\;
 f(x\circ_\lambda y)= f(x)\circ'_\lambda f(y).
\end{eqnarray}
A morphism $f$ of  $A$ into itself is called an endomorphism.
A bijective endomorphism is said to be an automorphism.
\end{defn}

\begin{exam}
  Let $(L, [-_\lambda -], \circ_\lambda)$ be a post-Lie conformal superalgebra and $\varphi, \psi$ two commuting endomorphims of $L$.
  Then, $(L, \ldb-\lambda-\rdb=[-_\lambda-](\varphi\otimes\psi), \bullet_\lambda(\varphi\otimes\psi), \varphi, \psi)$ is a BiHom-post-Lie conformal superalgebra.
\end{exam}

\begin{prop}
 Let $(L, [-_\lambda -], \circ_\lambda, \varphi, \psi)$ be a BiHom-post-Lie conformal superalgebra. Let us define the $\mathbb{C}$-bilinear maps $\bullet_\lambda : L\times L\rightarrow L[\lambda]$
and $\ldb-_\lambda -\rdb : L\times L\rightarrow L[\lambda]$ as
\begin{eqnarray}
x\bullet_\lambda y:=x\circ_\lambda y+[x_\lambda y],\quad \ldb x_\lambda y\rdb:=-[x_\lambda y].\nonumber
\end{eqnarray}
Then, $(L, \ldb -_\lambda -\rdb, \bullet_\lambda, \varphi, \psi)$ is again a BiHom-post-Lie conformal superalgebra.
\end{prop}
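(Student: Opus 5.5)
We verify the axioms of Definition \ref{hplad} for the new pair $(\ldb-_\lambda-\rdb,\bullet_\lambda)$ directly from those of $([-_\lambda-],\circ_\lambda)$. The routine parts come first. Sesquilinearity of $\bullet_\lambda$ and $\ldb-_\lambda-\rdb$ holds because both original operations are sesquilinear and the new ones are linear combinations of them. Multiplicativity is equally direct: $\varphi,\psi$ are endomorphisms for both $\circ_\lambda$ and $[-_\lambda-]$, hence for $\bullet_\lambda$ and $\ldb-_\lambda-\rdb$. The bracket $\ldb x_\lambda y\rdb=-[x_\lambda y]$ is a BiHom-Lie conformal superalgebra bracket, since \eqref{sks} and \eqref{bhj} are homogeneous (linear, resp. quadratic) in the bracket. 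Under $[\,]\mapsto-[\,]$ both sides of \eqref{sks} change sign, and each term of \eqref{bhj} picks up $(-1)^2=1$.

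Next we check \eqref{pl4} for the new structure. We expand
$$\varphi\psi(x)\bullet_\lambda\ldb y_\mu z\rdb=-\varphi\psi(x)\circ_\lambda[y_\mu z]-[\varphi\psi(x)_\lambda[y_\mu z]]$$
and similarly
$$\ldb(\psi(x)\bullet_\lambda y)_{\lambda+\mu}\psi(z)\rdb=-[(\psi(x)\circ_\lambda y)_{\lambda+\mu}\psi(z)]-[[\psi(x)_\lambda y]_{\lambda+\mu}\psi(z)]$$
together with the analogous term in $y$ and $\varphi(x)\circ_\lambda z$. The $\circ$-terms cancel by \eqref{pl4} for the old structure. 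What remains is
$$[\varphi\psi(x)_\lambda[y_\mu z]]-[[\psi(x)_\lambda y]_{\lambda+\mu}\psi(z)]-(-1)^{|x||y|}[\psi(y)_\mu[\varphi(x)_\lambda z]]=0,$$
the Leibniz-type form of the BiHom-Jacobi identity. Deriving it from \eqref{bhj} and \eqref{sks} needs care. We substitute $y\mapsto\psi(y')$, $z\mapsto\varphi(z')$ where needed, which presupposes regularity or that we may work on the image. We then rewrite $[[\psi(x)_\lambda y]_{\lambda+\mu}\psi(z)]$ using skew-symmetry \eqref{sks} as $-(-1)^{\cdot}[\psi(z)_{-\partial-\lambda-\mu}[\psi(x)_\lambda y]]$ with appropriate twisting. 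After this rewriting the identity becomes exactly \eqref{bhj}.

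Then we check \eqref{pl3}. We write $as^\bullet_{\varphi,\psi}$ for the BiHom-associator of $\bullet_\lambda$ and expand it as a sum of four kinds of terms. These are the $\circ\circ$ associator $as_{\varphi,\psi}$, the pure bracket associator
$$[\varphi(x)_\lambda[y_\mu z]]-[[x_\lambda y]_{\lambda+\mu}\psi(z)],$$
and the two mixed terms $\varphi(x)\circ_\lambda[y_\mu z]-(x\circ_\lambda y)\ldots$ and $[\varphi(x)_\lambda(y\circ_\mu z)]-[x_\lambda y]\circ_{\lambda+\mu}\psi(z)$. We apply this with $(\psi(x),\varphi(y),z)$ and antisymmetrize in $x,y$. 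The $\circ\circ$ part gives $[\psi(x)_\lambda\varphi(y)]\circ_{\lambda+\mu}\psi(z)$ by \eqref{pl3}. The mixed terms $\varphi\psi(x)\circ_\lambda[\varphi(y)_\mu z]$ are rewritten by \eqref{pl4} into brackets involving $\psi(x)\circ_\lambda\varphi(y)$, and these combine with the antisymmetrized $[(\psi(x)\circ_\lambda\varphi(y))_{\lambda+\mu}\psi(z)]$ contributions. The antisymmetrized pure bracket terms reduce by the Jacobi identity, in the form obtained in the previous step, to $[[\psi(x)_\lambda\varphi(y)]_{\lambda+\mu}\psi(z)]$. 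Collecting everything should yield
$$[\psi(x)_\lambda\varphi(y)]\circ_{\lambda+\mu}\psi(z)+[[\psi(x)_\lambda\varphi(y)]_{\lambda+\mu}\psi(z)]+\ldots=\ldb(\psi(x)\bullet_\lambda\varphi(y))-(-1)^{|x||y|}(\psi(y)\bullet\varphi(x))\ldots$$
This must match the required right-hand side $\ldb\psi(x)_\lambda\varphi(y)\rdb\bullet_{\lambda+\mu}\psi(z)=-[\psi(x)_\lambda\varphi(y)]\circ_{\lambda+\mu}\psi(z)-[[\psi(x)_\lambda\varphi(y)]_{\lambda+\mu}\psi(z)]$. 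Achieving this match, with the signs fixed by the sign change of the bracket, is exactly the bookkeeping to verify.

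The main obstacle is this last step. The signs, the super signs $(-1)^{|x||y|}$, and the placement of $\varphi,\psi$ in the mixed terms must line up so that \eqref{pl4} can be applied verbatim; in particular the term $[\psi(y)_\mu(\varphi\psi(x)\circ z)]$ appears with a twist pattern that differs slightly from \eqref{pl4}. I would first try to arrange all terms with arguments $\psi(x),\varphi(y)$ so that \eqref{pl4} applies with $x$ replaced by $x$ and $(y,z)\mapsto(\varphi(y),z)$. If the twists do not match, I would then use multiplicativity to move $\varphi,\psi$ through the brackets.
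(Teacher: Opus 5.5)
\textbf{The gap: the Leibniz form of the Jacobi identity needs surjectivity, and the statement fails without it.} The paper gives no proof of this proposition. The step you mention only in passing is exactly where the statement breaks. Both new axioms rest on the Leibniz form of the Jacobi identity
\[
[\varphi\psi(x)_\lambda[y_\mu z]]=[[\psi(x)_\lambda y]_{\lambda+\mu}\psi(z)]+(-1)^{|x||y|}[\psi(y)_\mu[\varphi(x)_\lambda z]].
\]
This is precisely what \eqref{pl4} for $(\ldb-_\lambda-\rdb,\bullet_\lambda)$ reduces to, and you need it again at $(x,\varphi(y),z)$ for \eqref{pl3}.

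From \eqref{sks}, \eqref{bhj} and multiplicativity you get it only for $x=\psi(x')$, $y=\psi(y')$, $z=\varphi(z')$: apply \eqref{bhj} to $(\varphi(x'),y',z')$ and use \eqref{sks} on the two terms on the right. ``Working on the image'' is not an option, since the axioms must hold for all $x,y,z\in L$.

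No other argument can rescue this: without surjectivity of $\varphi,\psi$ the proposition is false. Take $\mathfrak g=\mathbb{C}v\oplus\mathbb{C}w$ (even) with $[v,w]=[w,w]=w$ and $[v,v]=[w,v]=0$. Let $\varphi=\psi=p$ with $p(v)=v$, $p(w)=0$, and let $\circ=0$. Pass to the current conformal algebra $L=\mathbb{C}[\partial]\otimes\mathfrak g$, with $[a_\lambda b]=[a,b]$ for $a,b\in\mathfrak g$.
\begin{itemize}
\item All brackets lie in $\mathbb{C}[\partial]w=\ker p$, and the bracket vanishes on $p(L)=\mathbb{C}[\partial]v$.
\item Hence multiplicativity, \eqref{sks} and \eqref{bhj} hold trivially, and \eqref{pl4}, \eqref{pl3} hold because $\circ_\lambda=0$.
\item For the new structure, $\bullet_\lambda=[-_\lambda-]$ and $\ldb-_\lambda-\rdb=-[-_\lambda-]$. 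With $x=v$, $y=z=w$, the left side of \eqref{pl4} equals $-[v_\lambda[w_\mu w]]=-w\neq0$.
\end{itemize}
So a regularity hypothesis must be added; surjectivity of $\varphi$ and $\psi$ suffices.

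\textbf{Under that hypothesis your plan works, and the \eqref{pl3} step closes cleanly.} There is no twist mismatch: the term $[\psi(y)_\mu(\varphi\psi(x)\circ z)]$ you worried about never appears.

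First, the mixed terms cancel. In $as^\bullet(\psi(x),\varphi(y),z)$, apply \eqref{pl4} to $(x,\varphi(y),z)$. This turns $\varphi\psi(x)\circ_\lambda[\varphi(y)_\mu z]-[(\psi(x)\circ_\lambda\varphi(y))_{\lambda+\mu}\psi(z)]$ into $(-1)^{|x||y|}[\varphi\psi(y)_\mu(\varphi(x)\circ_\lambda z)]$. Together with $[\varphi\psi(x)_\lambda(\varphi(y)\circ_\mu z)]$, this is invariant under $(x,\lambda)\leftrightarrow(y,\mu)$ up to $(-1)^{|x||y|}$, so it cancels in the antisymmetrization.

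Second, the bracket terms double. For this you must take the second associator in \eqref{pl3} with $\mu$ on $y$ and $\lambda$ on $x$; the displayed definition leaves the indices unswapped. Then \eqref{sks} and sesquilinearity give
\[
(-1)^{|x||y|}[\psi(y)_\mu\varphi(x)]\circ_{\lambda+\mu}\psi(z)=-[\psi(x)_\lambda\varphi(y)]\circ_{\lambda+\mu}\psi(z),
\]
and similarly with an outer bracket. So the terms $-[\psi(x)_\lambda\varphi(y)]\circ_{\lambda+\mu}\psi(z)$ and $-[[\psi(x)_\lambda\varphi(y)]_{\lambda+\mu}\psi(z)]$ from the two associators double.

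Finally, collect the contributions. The old \eqref{pl3} contributes $[\psi(x)_\lambda\varphi(y)]\circ_{\lambda+\mu}\psi(z)$. The Leibniz identity at $(x,\varphi(y),z)$ contributes $[[\psi(x)_\lambda\varphi(y)]_{\lambda+\mu}\psi(z)]$. The total is
\[
-[\psi(x)_\lambda\varphi(y)]\circ_{\lambda+\mu}\psi(z)-[[\psi(x)_\lambda\varphi(y)]_{\lambda+\mu}\psi(z)]=\ldb\psi(x)_\lambda\varphi(y)\rdb\bullet_{\lambda+\mu}\psi(z),
\]
as required.
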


Now, we have the following theorem.
\begin{thm}\label{cla}
 Let $(L, [-_\lambda -], \circ_\lambda, \varphi, \psi)$ be a regular BiHom-post-Lie conformal superalgebra.
Let us define the even $\mathbb{C}$-bilinear map $\ldb-_\lambda -\rdb : L\times L\rightarrow L[\lambda]$ by
\begin{eqnarray}
 \ldb x_\lambda y\rdb:=x\circ_\lambda y-(-1)^{|x||y|} \varphi^{-1}\psi(y)\circ_{-\lambda-\partial}\varphi\psi^{-1}(x)+[x_\lambda y].\nonumber
\end{eqnarray}
Then, $(L, \ldb-_\lambda -\rdb, \varphi, \psi)$ is a BiHom-Lie conformal  superalgebra.
\end{thm}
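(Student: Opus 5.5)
The plan is to follow the classical pattern: a post-Lie algebra gives a Lie algebra through $x\bullet y - y\bullet x + [x,y]$. Here the twists $\varphi^{-1}\psi$ and $\varphi\psi^{-1}$, which come from Proposition \ref{alcsa}, have to be tracked throughout. Write $\ldb x_\lambda y\rdb = \{x_\lambda y\} + [x_\lambda y]$, where $\{x_\lambda y\}=x\circ_\lambda y-(-1)^{|x||y|}\varphi^{-1}\psi(y)\circ_{-\lambda-\partial}\varphi\psi^{-1}(x)$ is the BiHom-commutator of $\circ_\lambda$. Three properties must be checked: conformal sesquilinearity, the skew-symmetry (\ref{sks}), and the BiHom-Jacobi identity (\ref{bhj}).

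\textbf{Sesquilinearity.} This follows term by term from the sesquilinearity of $\circ_\lambda$ and of $[-_\lambda-]$, together with $\partial\varphi=\varphi\partial$ and $\partial\psi=\psi\partial$. The only point needing care is the substitution $\lambda\mapsto-\lambda-\partial$ in the second term, which is handled exactly as in the ordinary conformal commutator.

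\textbf{Skew-symmetry.} The bracket $[-_\lambda-]$ already satisfies (\ref{sks}). For the commutator part, compute $\{\psi(x)_\lambda\varphi(y)\}=\psi(x)\circ_\lambda\varphi(y)-(-1)^{|x||y|}\psi(y)\circ_{-\lambda-\partial}\varphi(x)$, using that $\varphi$ and $\psi$ commute. This expression is visibly antisymmetric under $x\leftrightarrow y$, $\lambda\leftrightarrow-\lambda-\partial$.

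\textbf{BiHom-Jacobi identity.} Expand $\ldb\psi^2(x)_\lambda\ldb\psi(y)_\mu\varphi(z)\rdb\rdb$ into four kinds of terms:
\begin{enumerate}
\item[(a)] terms $\{\{\}\}$, involving only $\circ$;
\item[(b)] terms $[[\,]]$, involving only the Lie bracket;
\item[(c)] mixed terms $\{[\,]\}$;
\item[(d)] mixed terms $[\{\}]$.
\end{enumerate}
Then take the cyclic sum with signs. The (b) terms vanish by the BiHom-Jacobi identity of $[-_\lambda-]$.

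In (a), each $\{\}$ produces products of the form $a\circ(b\circ c)$ and $(a\circ b)\circ c$. After using multiplicativity and regularity to move the $\varphi^{\pm1}$ and $\psi^{\pm1}$ into standard position, these products regroup into $as_{\varphi,\psi}$ expressions, exactly as in the proof of Proposition \ref{alcsa}. The difference from that proof is that $\circ$ is not BiHom-associative here. By (\ref{pl3}), the cyclic sum of the associators therefore does not vanish: it leaves a residue of terms $[\psi(x)_\lambda\varphi(y)]\circ_{\lambda+\mu}\psi(z)$, suitably twisted and cyclically permuted.

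The mixed terms (c) and (d) are terms of the form $a\circ[b_\mu c]$, $[(a\circ b)_\mu c]$ and $[b_\mu(a\circ c)]$, together with terms $[a]\circ c$ coming from $\{u_\lambda[\,]\}$. Relation (\ref{pl4}) converts each $\varphi\psi(a)\circ_\lambda[b_\mu c]$ into two Lie brackets, so most of the mixed terms cancel against one another. The remaining terms of the form $[\cdot]\circ\psi(\cdot)$ should cancel exactly the residue left from the (a) terms.

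The main obstacle is the bookkeeping in this last step. The arguments must be rewritten so that (\ref{pl4}) and (\ref{pl3}) apply literally, that is, with $\varphi\psi(x)$, $\psi(y)$ and $\varphi(x)$ in the right slots. This forces repeated insertion of $\varphi^{-1}\psi$ and $\varphi\psi^{-1}$ and the use of regularity, as well as sesquilinearity shifts such as $\lambda\mapsto-\lambda-\mu-\partial$ when converting terms in which $\partial$ acts on an inner bracket. My first attempt would be to substitute $x\to\varphi^{-2}x$-type normalizations at the outset, so that all three cyclic terms have the shape $\psi^2(\cdot)$ outside and $\psi(\cdot),\varphi(\cdot)$ inside. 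I would then group the twelve mixed terms by which element sits outside the Lie bracket, and apply (\ref{pl4}) once per element, tracking the signs $(-1)^{|x||y|}$ and similar carefully.
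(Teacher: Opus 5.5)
Your proposal is correct and follows essentially the same route as the paper. You expand the three cyclic double brackets and kill the pure $[-_\lambda-]$ terms by (\ref{bhj}). Via (\ref{pl3}), you collapse the iterated $\circ_\lambda$ terms into a residue of the form $[\cdot_\lambda\cdot]\circ_{\lambda+\mu}\cdot$, which cancels the bracket-then-product terms, and via (\ref{pl4}) you cancel the terms $\cdot\circ_\lambda[\cdot_\mu\cdot]$ against those of the form $[\cdot_\lambda(\cdot\circ_\mu\cdot)]$. Your plan is slightly more complete than the paper's, which only writes out the Jacobi expansion. When you do the bookkeeping, note that one of the two brackets produced by (\ref{pl4}) matches its partner only after applying the skew-symmetry (\ref{sks}) of $[-_\lambda-]$.
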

\begin{proof}
 We have to prove the conformal sesquilinearity, the conformal skew-symmetry and the conformal BiHom-Jacobi identity. But, we only prove the last one.
 \begin{eqnarray}
  \ldb \psi^2(x)_\lambda \ldb \psi(y)_\mu \varphi(z)\rdb\rdb
  &=&\ldb \psi^2(x)_\lambda\Big(\psi(y)\circ_{\mu}\varphi(z)-(-1)^{|y||z|}\psi(z)\circ_{-\mu-\partial}\varphi(y)+[\psi(y)_\mu \varphi(z)]\Big)\rdb\nonumber\\
  &=&\psi^2(x)\circ_\lambda\Big(\psi(y)\circ_{\mu}\varphi(z)-(-1)^{|y||z|}\psi(z)\circ_{-\mu-\partial}\varphi(y)+[\psi(y)_\mu \varphi(z)]\Big)\nonumber\\
  &&-(-1)^{|x|(|y|+|z|)}\varphi^{-1}\psi\Big(\psi(y)\circ_{\mu}\varphi(z)-(-1)^{|y||z|}\psi(z)\circ_{-\mu-\partial}\varphi(y)\nonumber\\
  &&+[\psi(y)_\mu \varphi(z)]\Big)\circ_{-\lambda-\partial}\varphi\psi(x)+[\psi^2(x)_\lambda\Big(\psi(y)\circ_{\mu}\varphi(z)\nonumber\\
  &&-(-1)^{|y||z|}\psi(z)\circ_{-\mu-\partial}\varphi(y)+[\psi(y)_\mu \varphi(z)]\Big)]\nonumber\\
  &=&\psi^2(x)\circ_\lambda(\psi(y)\circ_{\mu}\varphi(z))-(-1)^{|y||z|}\psi^2(x)\circ_\lambda(\psi(z)\circ_{-\mu-\partial}\varphi(y))\nonumber\\
  &&+\psi^2(x)\circ_\lambda([\psi(y)_\mu \varphi(z)])\nonumber\\
  &&-(-1)^{|x|(|y|+|z|)}(\varphi^{-1}\psi^2(y)\circ_{\mu}\psi(z))\circ_{-\lambda-\partial}\varphi\psi(x)\nonumber\\
  &&-(-1)^{|y||z|+|x|(|y|+|z|)}(\varphi^{-1}\psi^2(z)\circ_{-\mu-\partial}\psi(y))\circ_{-\lambda-\partial}\varphi\psi(x)\nonumber\\
  &&+(-1)^{|x|(|y|+|z|)}[\varphi^{-1}\psi^2(y)_\mu \psi(z)]\circ_{-\lambda-\partial}\varphi\psi(x)+[\psi^2(x)_\lambda(\psi(y)\circ_{\mu}\varphi(z))]\nonumber\\
  &&-(-1)^{|y||z|}[\psi^2(x)_\lambda(\psi(z)\circ_{-\mu-\partial}\varphi(y))]+[\psi^2(x)_\lambda[\psi(y)_\mu \varphi(z)]].\nonumber
 \end{eqnarray}
 Next,
 \begin{eqnarray}
  &&\hspace{2cm}\ldb \psi^2(y)_\mu \ldb \psi(z)_{-\lambda-\partial} \varphi(x)\rdb\rdb=\nonumber\\
   &&=\psi^2(y)\circ_\mu(\psi(z)\circ_{-\lambda-\partial}\varphi(x))-(-1)^{|x||y|}\psi^2(y)\circ_\mu(\psi(x)\circ_{\lambda}\varphi(z))\nonumber\\
  &&+\psi^2(y)\circ_\mu([\psi(z)_{-\lambda-\partial} \varphi(x)])-(-1)^{|y|(|x|+|z|)}(\varphi^{-1}\psi^2(z)\circ_{-\lambda-\partial}\psi(x))\circ_{-\mu-\partial}\varphi\psi(y)\nonumber\\
  &&-(-1)^{|x||z|+|y|(|x|+|z|)}(\varphi^{-1}\psi^2(x)\circ_{\lambda}\psi(z))\circ_{-\mu-\partial}\varphi\psi(y)\nonumber\\
  &&+(-1)^{|y|(|x|+|z|)}[\varphi^{-1}\psi^2(z)_{-\lambda-\partial} \psi(x)]\circ_{-\mu-\partial}\varphi\psi(y)\nonumber\\
  &&+[\psi^2(y)_\mu(\psi(z)\circ_{-\lambda-\partial}\varphi(x))]-(-1)^{|x||z|}[\psi^2(y)_\mu(\psi(x)\circ_{\lambda}\varphi(z))]+[\psi^2(y)_\mu[\psi(z)_{-\lambda-\partial} \varphi(x)]],\nonumber
  \end{eqnarray}
  and
  \begin{eqnarray}
 &&\hspace{2cm} \ldb \psi^2(z)_{-\lambda-\mu-\partial} \ldb \psi(x)_\lambda \varphi(y)\rdb\rdb=\nonumber\\
  &&=\psi^2(z)\circ_{-\lambda-\mu-\partial}(\psi(x)\circ_{\lambda}\varphi(y))-(-1)^{|x||y|}\psi^2(z)\circ_{-\lambda-\mu-\partial}(\psi(y)\circ_{-\lambda-\partial}\varphi(x))\nonumber\\
  &&+\psi^2(z)\circ_{-\lambda-\mu-\partial}([\psi(x)_\lambda \varphi(y)])-(-1)^{|z|(|x|+|y|)}(\varphi^{-1}\psi^2(x)\circ_{\lambda}\psi(y))\circ_{\lambda+\mu}\varphi\psi(z)\nonumber\\
  &&-(-1)^{|x||y|+|z|(|x|+|y|)}(\varphi^{-1}\psi^2(y)\circ_{-\lambda-\partial}\psi(x))\circ_{\lambda+\mu}\varphi\psi(z)\nonumber\\
  &&+(-1)^{|z|(|x|+|y|)}[\varphi^{-1}\psi^2(x)_\lambda \psi(y)]\circ_{\lambda+\mu}\varphi\psi(z)+[\psi^2(z)_{-\lambda-\mu-\partial}(\psi(x)\circ_{\lambda}\varphi(y))]\nonumber\\
  &&-(-1)^{|x||y|}[\psi^2(z)_{-\lambda-\mu-\partial}(\psi(y)\circ_{-\lambda-\partial}\varphi(x))]+[\psi^2(z)_{-\lambda-\mu-\partial}[\psi(x)_\lambda \varphi(y)]].\nonumber
  \end{eqnarray}
  The conclusion follows by adding these three equations (with their coefficients) and using axioms of BiHom-post-Lie algebra.
\end{proof}

The proof of the next proposition is similar to the above Theorem \ref{cla}.
 \begin{prop}
 Let $(L, [-_\lambda -], \circ_\lambda, \varphi, \psi)$ be a conformal BiHom-post-Lie superalgebra. With the product
$$x\bullet_\lambda y=x\circ_\lambda y+\frac{1}{2}[x_\lambda y],$$
 $(L, \bullet_\lambda, \varphi, \psi)$ becomes a BiHom-Lie admissible conformal superalgebra i.e. the map $\ast_\lambda : L\times L\rightarrow L[\lambda]$ satisfies the conformal
BiHom-Jacobi identity.
\end{prop}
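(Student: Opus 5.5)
We must show that the product $x\bullet_\lambda y=x\circ_\lambda y+\frac12[x_\lambda y]$ is BiHom-Lie admissible. In the regular case this means that the commutator bracket
\[
\ast_\lambda:\;(x,y)\mapsto x\bullet_\lambda y-(-1)^{|x||y|}\varphi^{-1}\psi(y)\bullet_{-\lambda-\partial}\varphi\psi^{-1}(x)
\]
satisfies the conformal BiHom-Jacobi identity; as in Theorem \ref{cla}, we assume regularity so that $\varphi^{-1},\psi^{-1}$ exist. The guiding idea is to reduce the claim to Theorem \ref{cla} rather than expand everything from scratch.

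First, expand $x\ast_\lambda y$ using the definition of $\bullet_\lambda$. This gives
\[
x\ast_\lambda y=x\circ_\lambda y-(-1)^{|x||y|}\varphi^{-1}\psi(y)\circ_{-\lambda-\partial}\varphi\psi^{-1}(x)+\tfrac12\Big([x_\lambda y]-(-1)^{|x||y|}[\varphi^{-1}\psi(y)_{-\lambda-\partial}\varphi\psi^{-1}(x)]\Big).
\]
Write $x=\psi(x')$ and $y=\varphi(y')$, and note that $\varphi^{-1}\psi(y)=\psi(y')$ and $\varphi\psi^{-1}(x)=\varphi(x')$. The conformal skew-symmetry \eqref{sks} of $[-_\lambda-]$ then gives
\[
-(-1)^{|x||y|}[\psi(y')_{-\lambda-\partial}\varphi(x')]=[\psi(x')_\lambda\varphi(y')]=[x_\lambda y].
\]
Hence the bracketed term equals $[x_\lambda y]$, and $x\ast_\lambda y$ coincides exactly with the bracket $\ldb x_\lambda y\rdb$ of Theorem \ref{cla}. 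Sesquilinearity of $\ast_\lambda$ follows from that of $\circ_\lambda$ and $[-_\lambda-]$, using that $\varphi$ and $\psi$ commute with $\partial$.

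Second, invoke Theorem \ref{cla}: for a regular BiHom-post-Lie conformal superalgebra, $\ldb-_\lambda-\rdb$ satisfies the conformal BiHom-Jacobi identity \eqref{bhj}. It follows that $\ast_\lambda$ does too, which is precisely BiHom-Lie admissibility of $(L,\bullet_\lambda,\varphi,\psi)$.

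The main obstacle is making sure the skew-symmetry step is applied with the correct twisting. Identity \eqref{sks} is stated only for arguments of the form $\psi(x),\varphi(y)$, so regularity is genuinely needed to rewrite arbitrary elements in that form, and one must check that the $\lambda$-shift $-\lambda-\partial$ is handled consistently, with $\partial$ acting on the whole expression. If Theorem \ref{cla} is considered insufficiently proved, the fallback is a direct verification: expand the three cyclic terms of \eqref{bhj} for $\ast_\lambda$, exactly as in the computation in the proof of Theorem \ref{cla}, and cancel the purely $\circ$-terms using \eqref{pl3}, the mixed terms using \eqref{pl4}, and the pure bracket terms using \eqref{bhj} for $[-_\lambda-]$.
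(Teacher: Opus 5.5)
Your argument is correct, but it is organized differently from the paper's. The paper gives no real proof here. It only says the argument is ``similar to'' that of Theorem \ref{cla}: one is meant to expand the three cyclic terms of \eqref{bhj} for the commutator of $\bullet_\lambda$ and cancel them using \eqref{pl4}, \eqref{pl3} and the Jacobi identity of $[-_\lambda-]$, this time also carrying the extra $\frac12$-bracket terms.

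You instead write $x=\psi(x')$ and $y=\varphi(y')$, and observe that the skew-symmetry \eqref{sks} makes the two half-bracket terms equal. The BiHom-commutator of $\bullet_\lambda$ is then literally the bracket $\ldb-_\lambda-\rdb$ of Theorem \ref{cla}, so the proposition becomes an immediate corollary with no new computation.

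Your route avoids repeating the computation and explains the factor $\frac12$. It is the BiHom analogue of the classical fact that the commutator of $x\circ y+\frac12[x,y]$ is $x\circ y-y\circ x+[x,y]$. The cost is that you inherit whatever rigor Theorem \ref{cla} has, and its proof in the paper is itself only sketched. The paper's suggested route rests on that same computation, though, so nothing is lost.

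Your added regularity hypothesis is not a real restriction. The twisted commutator that defines BiHom-Lie admissibility needs $\varphi^{-1}$ and $\psi^{-1}$, and Theorem \ref{cla} assumes regularity too. Your reading of the undefined map $\ast_\lambda$ as this twisted commutator is the one consistent with the paper's reference to Theorem \ref{cla}.
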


\begin{thm}\label{tw1p}
 Let $(L, [-_\lambda -], \cdot_\lambda, \varphi, \psi)$ be a  BiHom-post-Lie conformal superalgebra and $\varphi', \psi' : P\rightarrow P$ two even morphisms of
BiHom-post-Lie conformal superalgebras such that the maps $\varphi, \varphi', \psi, \psi'$ commute pairewise. Then,
$$L_{(\varphi', \psi')}=(L, \ldb-_\lambda-\rdb:=[-_\lambda-](\varphi\otimes\psi), \bullet_\lambda:=\circ_\lambda(\varphi\otimes\psi), \varphi\varphi', \psi\psi')$$
is a  BiHom-post-Lie conformal algebra.
\end{thm}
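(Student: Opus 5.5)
I read the statement as the usual Yau-type twisting: the new operations are $\ldb x_\lambda y\rdb=[\varphi'(x)_\lambda\psi'(y)]$ and $x\bullet_\lambda y=\varphi'(x)\circ_\lambda\psi'(y)$, with structure maps $\varphi\varphi'$ and $\psi\psi'$. The $(\varphi\otimes\psi)$ written in the statement is presumably a misprint for $(\varphi'\otimes\psi')$, and similarly $P$ should be $L$. The proof then splits into three parts.

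\textbf{Part 1 (BiHom-Lie structure).} Since $\varphi',\psi'$ are commuting endomorphisms of the BiHom-Lie conformal superalgebra $(L,[-_\lambda-],\varphi,\psi)$ that commute with $\varphi,\psi$, Proposition 2.2 of \cite{T} shows that $(L,\ldb-_\lambda-\rdb,\varphi\varphi',\psi\psi')$ is a BiHom-Lie conformal superalgebra.

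\textbf{Part 2 (conformal BiHom structure and multiplicativity).} Sesquilinearity of $\bullet_\lambda$ follows as in Theorem \ref{tsa}, because $\varphi',\psi'$ commute with $\partial$. Multiplicativity of both new operations with respect to $\varphi\varphi'$ and $\psi\psi'$ follows from two facts: all four maps commute pairwise, and each of them is a morphism of both $[-_\lambda-]$ and $\circ_\lambda$. For example,
$$\varphi\varphi'(x\bullet_\lambda y)=\varphi'(\varphi\varphi'(x))\circ_\lambda\psi'(\varphi\varphi'(y))=\varphi\varphi'(x)\bullet_\lambda\varphi\varphi'(y).$$

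\textbf{Part 3 (the compatibility axioms).} This is the main point: verify (\ref{pl4}) and (\ref{pl3}) for the new operations. The strategy is to push $\varphi',\psi'$ outward using the morphism property, so that each term becomes a term of the original axiom evaluated at twisted arguments.

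For (\ref{pl4}), put $X=\varphi'^2\psi'(x)$, $Y=\varphi'\psi'(y)$ and $Z=\psi'^2(z)$. A direct expansion gives
\begin{eqnarray}
\varphi\varphi'\psi\psi'(x)\bullet_\lambda\ldb y_\mu z\rdb&=&\varphi\psi(X)\circ_\lambda[Y_\mu Z],\nonumber\\
\ldb(\psi\psi'(x)\bullet_\lambda y)_{\lambda+\mu}\psi\psi'(z)\rdb&=&[(\psi(X)\circ_\lambda Y)_{\lambda+\mu}\psi(Z)],\nonumber\\
\ldb\psi\psi'(y)_\mu(\varphi\varphi'(x)\bullet_\lambda z)\rdb&=&[\psi(Y)_\mu(\varphi(X)\circ_\lambda Z)].\nonumber
\end{eqnarray}
The parity signs are unchanged because all maps are even. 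Hence the new (\ref{pl4}) is exactly the old (\ref{pl4}) evaluated at $(X,Y,Z)$.

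For (\ref{pl3}), I first compute the twisted associator:
$$as'(a,b,c)=as_{\varphi,\psi}\big(\varphi'^2(a),\varphi'\psi'(b),\psi'^2(c)\big).$$
Substituting $a=\psi\psi'(x)$, $b=\varphi\varphi'(y)$, $c=z$, and the same with $x$ and $y$ exchanged, shows that the left side of (\ref{pl3}) becomes the old left side evaluated at $X'=\varphi'^2\psi'(x)$, $Y'=\varphi'^2\psi'(y)$, $Z'=\psi'^2(z)$. The right side $\ldb\psi\psi'(x)_\lambda\varphi\varphi'(y)\rdb\bullet_{\lambda+\mu}\psi\psi'(z)$ becomes $[\psi(X')_\lambda\varphi(Y')]\circ_{\lambda+\mu}\psi(Z')$, so the identity follows from the original (\ref{pl3}).

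\textbf{Main obstacle.} I expect the only real difficulty to be the bookkeeping in (\ref{pl3}). One must check that the two associators, with $x$ and $y$ swapped, both receive the same pattern of twists on $x$ and on $y$, so that a single substitution $(X',Y',Z')$ works for every term. This is where pairwise commutativity and the morphism property of $\varphi',\psi'$ for both products are essential. If the exponents fail to match, I would repeat the verification term by term, applying $\varphi'$ or $\psi'$ through each product.
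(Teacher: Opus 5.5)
Your proposal is correct and follows the paper's route: the paper's proof only says to repeat the Yau-twist computation from the second part of Theorem \ref{tsa}, i.e.\ push $\varphi',\psi'$ outward through both operations using the morphism property and pairwise commutativity. Your substitutions $(\varphi'^2\psi'(x),\varphi'\psi'(y),\psi'^2(z))$ and $(\varphi'^2\psi'(x),\varphi'^2\psi'(y),\psi'^2(z))$ carry this out correctly, and your reading of $\varphi\otimes\psi$ as $\varphi'\otimes\psi'$ is the intended one, matching Theorem \ref{tsa}. You should state one hypothesis explicitly: that $\varphi',\psi'$ commute with $\partial$. The paper's definition of morphism only asks for $\mathbb{C}$-linearity, but this assumption, made in Theorem \ref{tsa}, is needed for sesquilinearity and for $\varphi\varphi',\psi\psi'$ to commute with $\partial$.
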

\begin{proof}
 It uses the same thcnics as in the second part of Theorem \ref{tsa}.
\end{proof}

\begin{defn}
An even $\mathbb{C}[\partial]$-linear map $R : L\rightarrow L$ on a BiHom-Lie conformal algebra $(L, [-_\lambda -], \varphi, \psi)$ is said to be a
Rota-Baxter operator of weight $k\in\mathbb{K}$ if $R\varphi=\varphi R$ and $R\psi=\psi R$, and
\begin{eqnarray}
 [R(x)_\lambda R(y)]&=&R\Big([R(x)_\lambda y]+ [x_\lambda R(y)]+k [x_\lambda y]\Big),
\end{eqnarray}
for any $x, y\in\mathcal{H}(T)$.\\
In this case, the tuple $(T, \dashv_\lambda, \vdash_\lambda, \circ_\lambda, \varphi, \psi, R)$ is said to be {\it Rota-Baxter BiHom-tridendriform      conformal superalgebra of weight $k$}.\\
Whenever the product $''\circ_\lambda''$ is trivial, we get BiHom-dendriform superalgebra.
\end{defn}

\begin{prop}
 Let $(L, [-_\lambda -], \varphi, \psi)$ be a regular BiHom-Lie  conformal superalgebra and $R\rightarrow R$ be a Rota-Baxter operator of weight $k$ on $L$. Then, $(L, \ldb-_\lambda -\rdb, \circ_\lambda, \varphi, \psi)$
 is a  BiHom-post-Lie conformal superalgebra, where
$$\ldb x_\lambda y\rdb=k[x_\lambda y],\quad   x\circ_\lambda y=[R(x)_\lambda y],$$ for any $x, y\in\mathcal{H}(L)$.
\end{prop}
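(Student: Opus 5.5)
We check the axioms of Definition \ref{hplad} directly for $\ldb x_\lambda y\rdb=k[x_\lambda y]$ and $x\circ_\lambda y=[R(x)_\lambda y]$. We take the statement with its regularity hypothesis and read the BiHom-Jacobi identity \eqref{bhj} of $L$ as holding for arbitrary $x,y,z$; since $\varphi,\psi$ are bijective, this is equivalent to the untwisted form in which $\psi^2(x),\psi(y),\varphi(z)$ are replaced by arbitrary elements. We also assume, as the phrase ``multiplicative'' in Definition \ref{hplad} requires, that $\varphi$ and $\psi$ are endomorphisms of $L$.

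\textbf{Structural axioms.} Sesquilinearity of $\circ_\lambda$ follows from that of $[-_\lambda-]$ and the $\mathbb{C}[\partial]$-linearity of $R$. Multiplicativity of $\circ_\lambda$ uses $R\varphi=\varphi R$ and $R\psi=\psi R$:
\[
\varphi(x)\circ_\lambda\varphi(y)=[\varphi R(x)_\lambda \varphi(y)]=\varphi([R(x)_\lambda y]).
\]
The bracket $k[-_\lambda-]$ is a multiplicative BiHom-Lie conformal superalgebra, because skew-symmetry and Jacobi are homogeneous (Jacobi acquires the factor $k^2$).

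\textbf{Identity \eqref{pl4}.} After substitution it reads
\[
k[R\varphi\psi(x)_\lambda[y_\mu z]]-k[[R\psi(x)_\lambda y]_{\lambda+\mu}\psi(z)]-k(-1)^{|x||y|}[\psi(y)_\mu[R\varphi(x)_\lambda z]]=0.
\]
Set $a=R(x)$. We use the skew-symmetry \eqref{sks} to rewrite $[[\psi(a)_\lambda y]_{\lambda+\mu}\psi(z)]$ as $\pm[\psi(z)_{-\lambda-\mu-\partial}[\psi(a)_\lambda y]]$, after choosing preimages under $\varphi$ via regularity. The identity then becomes one instance of the BiHom-Jacobi identity \eqref{bhj} applied to the triple $(\varphi^{-1}\psi^{-1}\cdot\,)$-shifted versions of $a,y,z$. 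This is the main bookkeeping obstacle: \eqref{bhj} uses the twists $\psi^2,\psi,\varphi$, while \eqref{pl4} uses $\varphi\psi,\psi,\psi$. We therefore first rewrite the Jacobi identity in the regular case in the equivalent ``BiHom-Leibniz'' form
\[
[\varphi\psi(a)_\lambda[y_\mu z]]=[[\psi(a)_\lambda y]_{\lambda+\mu}\psi(z)]+(-1)^{|a||y|}[\psi(y)_\mu[\varphi(a)_\lambda z]],
\]
which is obtained by substituting $x\mapsto\varphi\psi^{-1}(a)$, $y\mapsto\psi^{-1}(y)$, $z\mapsto\varphi^{-1}(z)$ and applying skew-symmetry to one term. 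Multiplying by $k$ then gives \eqref{pl4} at once, with no Rota-Baxter property needed.

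\textbf{Identity \eqref{pl3}.} We expand
\[
as_{\varphi,\psi}(\psi(x),\varphi(y),z)=[R\varphi\psi(x)_\lambda[R\varphi(y)_\mu z]]-[[R\psi(x)_\lambda \varphi(y)]_{\lambda+\mu}... ]
\]
more carefully as $[\varphi R\psi(x)_\lambda[R\varphi(y)_\mu z]]-[R([R\psi(x)_\lambda\varphi(y)])_{\lambda+\mu}\psi(z)]$. Antisymmetrising in $x,y$ and applying the Leibniz form above with $a=R\psi(x)$ and $b=R\varphi(y)$ (suitably twisted), the double brackets combine into $[[R\psi(x)_\lambda R\varphi(y)]_{\lambda+\mu}\psi(z)]$. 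The $R(\cdot)$ terms combine, using skew-symmetry, into $-[R([R\psi(x)_\lambda\varphi(y)]+[\psi(x)_\lambda R\varphi(y)])_{\lambda+\mu}\psi(z)]$. The Rota-Baxter identity then collapses the difference to $[R(k[\psi(x)_\lambda\varphi(y)])_{\lambda+\mu}\psi(z)]=\ldb\psi(x)_\lambda\varphi(y)\rdb\circ_{\lambda+\mu}\psi(z)$, which is the required right-hand side. The delicate points are tracking the signs $(-1)^{|x||y|}$ and converting $[\psi(y)_{-\lambda-\partial}\ldots]$ via \eqref{sks}, which we handle exactly as in the skew-symmetry step of Proposition \ref{rlc}.
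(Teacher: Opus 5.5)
Your proof is correct and follows the same route as the paper. You get \eqref{pl4} from the BiHom-Leibniz form of the conformal BiHom-Jacobi identity applied to $R(x)$, keeping the overall factor $k$ that the paper silently drops. For \eqref{pl3}, you use that Leibniz form on the two double brackets and skew-symmetry on the $R(\cdot)$ terms, which reduces it to the Rota--Baxter identity.

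One small correction: obtaining the Leibniz form from \eqref{bhj} by your substitution needs skew-symmetry on two terms, not one. These are the inner bracket of the second term and the outer bracket of the third, and the step also uses multiplicativity of $\psi\varphi^{-1}$.
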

\begin{proof}
 For any $x, y, z\in\mathcal{H}(L)$, we have :
\begin{eqnarray}
 &&\varphi\psi(x)\circ_\lambda(\varphi(y)\circ_\mu z)-(\psi(x)\circ_\lambda\varphi(y))\circ_{\lambda+\mu} \psi(z)-(-1)^{|x||y|}\varphi\psi(y) \circ_\lambda(\varphi(x)\circ_\mu z)\nonumber\\
 &&\hspace{2cm}+(-1)^{|x||y|}(\psi(y)\circ_\lambda\varphi(x))\circ_{\lambda+\mu}\psi(z)-
\ldb\psi(x)_\lambda\varphi(y)\rdb\circ_{\lambda+\mu}\psi(z)=\nonumber\\
&&=[R(\varphi\psi(x)_\lambda [R\varphi(y)_\mu z]]-[R([R\psi(x)_\lambda\varphi(y)])_{\lambda+\mu} \psi(z)]-(-1)^{|x||y|}[R(\varphi\psi(y))_\lambda [R\varphi(x)_\mu z]]\nonumber\\
&&\hspace{2cm}+(-1)^{|x||y|}[R([R\psi(y)_\lambda\varphi(x)])_{\lambda+\mu} \psi(z)]-k[R([\psi(x)_\lambda\varphi(y)])_{\lambda+\mu} \psi(z)]\nonumber\\
&&=[[\psi R(x)_\lambda R\varphi(y)]_{\lambda+\mu}\psi(z)]
+(-1)^{|x||y|}[\psi\varphi R(y)_\mu[\varphi R(x)_\lambda z]]-[R([R\psi(x)_\lambda\varphi(y)])_{\lambda+\mu} \psi(z)]\nonumber\\
&&\hspace{1cm}-(-1)^{|x||y|}\Big([[\psi R(y))_\lambda R\varphi(x)]_{\lambda+\mu}\psi(z)]]
+(-1)^{|x||y|}[\psi\varphi R(x)_\mu[\varphi R(y)_\lambda z]] \Big)\nonumber\\
&&\hspace{2cm}+(-1)^{|x||y|}[R([R\psi(y)_\lambda\varphi(x)])_{\lambda+\mu} \psi(z)]-k[R([\psi(x)_\lambda\varphi(y)])_{\lambda+\mu} \psi(z)]\nonumber\\
&&=[[\psi R(x)_\lambda R\varphi(y)]_{\lambda+\mu}\psi(z)]
-[R([R\psi(x)_\lambda\varphi(y)])_{\lambda+\mu} \psi(z)]\nonumber\\
&&\hspace{2cm}+(-1)^{|x||y|}[R([R\psi(y)_\lambda\varphi(x)])_{\lambda+\mu} \psi(z)]-k[R([\psi(x)_\lambda\varphi(y)])_{\lambda+\mu} \psi(z)]\nonumber\\
&&=[\Big([\psi R(x)_\lambda R\varphi(y)]
-R([R\psi(x)_\lambda\varphi(y)]-[R\psi(x)_\lambda\varphi(y)]-k[\psi(x)_\lambda\varphi(y)]\Big)_{\lambda+\mu} \psi(z)].\nonumber
\end{eqnarray}
The last line vanishes by Rota-Baxter identity. Now,
 \begin{eqnarray}
&&\varphi\psi(x)\circ_\lambda[y_\mu z]-[(\psi(x)\circ_\lambda y)_{\lambda+\mu} \psi(z)]- (-1)^{|x||y|}[\psi(y)_\mu(\psi(x)\circ_\lambda z)]=\nonumber\\
&&= [R\varphi\psi(x)_\lambda[y_\mu z]]-[[R\psi(x)_\lambda  y]_{\lambda+\mu} \psi(z)]- (-1)^{|x||y|}[\psi(y)_\mu [R\psi(x)_\lambda z]]\nonumber.
 \end{eqnarray}
The conformal BiHom-Jacobi identity of the bracket $[-_\lambda-]$ ends the proof.
\end{proof}

\begin{thm}
 Let $(L, [-_\lambda -], \circ_\lambda, \varphi, \psi)$ be a BiHom-post-Lie conformal superalgebra. Let us define the new bracket
 \begin{eqnarray}
  \ldb(a, x)_\lambda (b, y)\rdb&=&\Big(a\circ_\lambda b-(-1)^{|a||b|} \varphi^{-1}\psi(b)\circ_{-\lambda-\partial}\varphi\psi^{-1}(a)+[a_\lambda b],\nonumber\\
  &&a\circ_\lambda y-(-1)^{|x||y|} \varphi^{-1}\psi(b)\circ_{-\lambda-\partial}\varphi\psi^{-1}(x)+[x_\lambda y]\Big),\nonumber
 \end{eqnarray}
for all $a, b, x, y\in \mathcal{H}(L)$. Then, $(L, \ldb-_\lambda-\rdb, \varphi^{\times 2}, \psi^{\times 2})$ is a BiHom-Lie conformal superalgebra.
\end{thm}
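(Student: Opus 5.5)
The plan is to recognize the bracket in the statement as the one produced by Theorem \ref{cla}, applied to a suitable BiHom-post-Lie conformal superalgebra structure on $L\oplus L$. Throughout, I will assume (as the formula itself does, since it uses $\varphi^{-1}$ and $\psi^{-1}$) that $L$ is regular. I will call a pair $(a,x)$ homogeneous when $|a|=|x|$, so that signs such as $(-1)^{|a||b|}$ and $(-1)^{|x||y|}$ agree.

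On $L\oplus L$, with $\partial$ acting componentwise, I would introduce the structure
\begin{eqnarray}
(a,x)\diamond_\lambda(b,y)=(a\circ_\lambda b,\ a\circ_\lambda y),\qquad
\{(a,x)_\lambda(b,y)\}=([a_\lambda b],[x_\lambda y]),\nonumber
\end{eqnarray}
with structure maps $\varphi^{\times 2}$ and $\psi^{\times 2}$, which are regular.

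First I would check that this is the right structure. The recipe of Theorem \ref{cla} is: twisted commutator of $\diamond_\lambda$ plus $\{-_\lambda-\}$. Computed componentwise, it gives in the first component $a\circ_\lambda b-(-1)^{|a||b|}\varphi^{-1}\psi(b)\circ_{-\lambda-\partial}\varphi\psi^{-1}(a)+[a_\lambda b]$. In the second component it gives $a\circ_\lambda y-(-1)^{|x||y|}\varphi^{-1}\psi(b)\circ_{-\lambda-\partial}\varphi\psi^{-1}(x)+[x_\lambda y]$. This is exactly $\ldb(a,x)_\lambda(b,y)\rdb$ from the statement. Hence it suffices to prove that
$$(L\oplus L,\{-_\lambda-\},\diamond_\lambda,\varphi^{\times2},\psi^{\times2})$$
is a regular BiHom-post-Lie conformal superalgebra, and then Theorem \ref{cla} yields the result.

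Second, I would verify the post-Lie axioms for this structure.
\begin{enumerate}
\item[(i)] $\{-_\lambda-\}$ is the direct sum bracket, so by (\cite{T}, Proposition 2.3) it is a BiHom-Lie conformal superalgebra. Multiplicativity holds because $\varphi$ and $\psi$ are multiplicative for both $\circ_\lambda$ and $[-_\lambda-]$.
\item[(ii)] Sesquilinearity of $\diamond_\lambda$ is inherited componentwise from $\circ_\lambda$.
\item[(iii)] For (\ref{pl4}), the first component is (\ref{pl4}) for $(a,b,c)$. The second component reads
$$\varphi\psi(a)\circ_\lambda[y_\mu z]-[(\psi(a)\circ_\lambda y)_{\lambda+\mu}\psi(z)]-(-1)^{|a||y|}[\psi(y)_\mu(\varphi(a)\circ_\lambda z)],$$
which vanishes by (\ref{pl4}) applied to $(a,y,z)$.
\item[(iv)] For (\ref{pl3}), the second component of $as_{\varphi,\psi}(\psi(a,x),\varphi(b,y),(c,z))$ equals $as_{\varphi,\psi}(\psi(a),\varphi(b),z)$, because $\diamond_\lambda$ only ever uses the first component of its left factor. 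Its super-antisymmetrization in $a,b$ is therefore $[\psi(a)_\lambda\varphi(b)]\circ_{\lambda+\mu}\psi(z)$ by (\ref{pl3}). This is precisely the second component of $\{\psi(a,x)_\lambda\varphi(b,y)\}\diamond_{\lambda+\mu}\psi(c,z)$. The first component is again (\ref{pl3}) for $(a,b,c)$.
\end{enumerate}

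The main obstacle is the first step: identifying the auxiliary product $\diamond_\lambda$ so that the second component of the statement comes out exactly, with consistent parity conventions. The second component couples $a$ with $y$ and $b$ with $x$. The natural semidirect choice $(a\circ b,\ a\circ y+x\circ b)$ would produce extra terms, so I commit to the one-sided product $(a\circ_\lambda b,\ a\circ_\lambda y)$. Once this choice is made, everything reduces to the axioms of $L$ applied to suitably chosen triples, and no new computation beyond Theorem \ref{cla} is needed. Should the intended reading of the signs differ, I would fall back on directly expanding the three cyclic terms of (\ref{bhj}) componentwise, exactly as in the proof of Theorem \ref{cla}.
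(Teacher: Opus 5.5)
Your proposal is correct, but it takes a different route from the paper. The paper's proof of this statement is only ``direct computation by using axioms in Definition \ref{hplad}''. That means expanding the axioms for the bracket on pairs componentwise, including the three cyclic terms of (\ref{bhj}), and cancelling with (\ref{pl4}) and (\ref{pl3}), in the style of the sketch given for Theorem \ref{cla}.

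You instead observe that $\ldb-_\lambda-\rdb$ is exactly the bracket of Theorem \ref{cla} for an auxiliary structure on $L\oplus L$. That structure uses the direct-sum bracket and the one-sided product $(a,x)\diamond_\lambda(b,y)=(a\circ_\lambda b,\,a\circ_\lambda y)$. Your verification that this is a regular BiHom-post-Lie conformal superalgebra is right. The associator of $\diamond_\lambda$ has components $as_{\varphi,\psi}(u_1,v_1,w_1)$ and $as_{\varphi,\psi}(u_1,v_1,w_2)$. So (\ref{pl4}) and (\ref{pl3}) on $L\oplus L$ reduce to the same axioms in $L$ for the triples $(a,b,c)$, $(a,y,z)$ and $(a,b,z)$.

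Your route gives three things:
\begin{enumerate}
\item[(1)] It needs no new computation.
\item[(2)] It delivers sesquilinearity, skew-symmetry and the BiHom-Jacobi identity at once, whereas the direct computation must treat each.
\item[(3)] It explains the formula: the bracket is the sub-adjacent bracket of a post-Lie structure in which the second copy of $L$ is acted on only by left $\circ$-multiplication. This is also why your one-sided product, rather than the symmetric semidirect one, is the right choice.
\end{enumerate}
The reduction also loses nothing. The diagonal $a\mapsto(a,a)$ is closed under $\ldb-_\lambda-\rdb$, $\partial$, $\varphi^{\times 2}$ and $\psi^{\times 2}$, and restricts to the bracket of Theorem \ref{cla}. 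So the statement is in fact equivalent to Theorem \ref{cla}.

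The price is that all the computational content now rests on Theorem \ref{cla}, whose proof in the paper is itself only sketched. Your argument is only as solid as that sketch, whereas a direct expansion would be self-contained, though it is essentially the same computation done on pairs.

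You were also right to make explicit two hypotheses the statement leaves implicit. One is regularity of $\varphi$ and $\psi$. The other is the grading on $L\oplus L$ by pairs with $|a|=|x|$, without which the signs $(-1)^{|a||b|}$ and $(-1)^{|x||y|}$ would not define a super bracket.
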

\begin{proof}
 It comes from direct computation by using axioms in Definition \ref{hplad}.
\end{proof}

\subsection{BiHom-tridendriform conformal superalgebras}
\begin{defn}\label{tdd}
 A  BiHom-tridendriform conformal superalgebra is a tuple $(T, \dashv, \vdash, \circ, \varphi, \psi)$ consisting of a
 $\mathbb{C}[\partial]$-vector superspace $T$,
 three even $\mathbb{C}$-bilinear maps  $\dashv_\lambda, \vdash_\lambda, \circ_\lambda : T\otimes T\rightarrow T[\lambda]$,
 and  two even $\mathbb{C}$-linear map $\varphi, \psi : T\rightarrow T$ satisfying
\begin{eqnarray}
\varphi\psi&=&\psi\varphi,
\\
\varphi(x\dashv_\lambda y)&=&\varphi(x)\dashv_\lambda\varphi(y), \quad
\psi(x\dashv_\lambda y)=\psi(x)\dashv_\lambda\psi(y),
\\
\varphi(x\vdash_\lambda y)&=&\varphi(x)\vdash_\lambda\varphi(y),\quad
\psi(x\vdash_\lambda y)=\psi(x)\vdash_\lambda\psi(y),
\\
\varphi(x\circ_\lambda y)&=&\varphi(x)\circ_\lambda\varphi(y),
 \quad \psi(x\circ_\lambda y)=\psi(x)\circ_\lambda\psi(y), \\
(x\dashv_\lambda y)\dashv_{\lambda+\mu}\psi(z)&=&\varphi(x)\dashv_\lambda(y\dashv_\mu z+(-1)^{|y||z|}y\vdash_\mu z+(-1)^{|y||z|}y\circ_\mu z),\label{fat}\\
(x\vdash_\lambda y)\dashv_{\lambda+\mu}\psi(z)&=&(-1)^{|x||z|}\varphi(x)\vdash_\lambda(y\dashv_\mu z),\\
\varphi(x)\vdash_\lambda(y\vdash_\mu z)&=&((-1)^{|x||y|}x\dashv_\lambda y+x\vdash_\lambda y+x\circ_\lambda y)\vdash_{\lambda+\mu}\psi(z),\\
(x\dashv_\lambda y)\circ_{\lambda+\mu}\psi(z)&=&(-1)^{|x||y|}\varphi(x)\circ_\lambda(y\vdash_\mu z),\\
(x\vdash_\lambda y)\circ_{\lambda+\mu}\psi(z)&=&\varphi(x)\vdash_\lambda(y\circ_\mu z),\\
(x\circ_\lambda y)\dashv_{\lambda+\mu}\psi(z)&=&(-1)^{|x||z|}\varphi(x)\circ_\lambda(y\dashv_\mu z),\\
(x\circ_\lambda y)\circ_{\lambda+\mu}\psi(z)&=&\varphi(x)\circ_\lambda(y\circ_\mu z),
\end{eqnarray}
for $x, y, z\in \mathcal{H}(T)$.
 \end{defn}

The proof of the next proposition is also similar to the one of Theorem \ref{tsa}.
\begin{prop}
 Let $(T, \dashv_\lambda, \vdash_\lambda, \circ_\lambda, \varphi, \psi)$ be a  BiHom-tridendriform conformal superalgebra. Let $\varphi'$ and $\psi'$ be two commuting endomorphims of $T$ that commute with $\partial$. Then,
 $$T_{\varphi', \psi'}=(T, \triangleleft_\lambda=\dashv_\lambda(\varphi'\otimes\psi'), \triangleright_\lambda=\vdash_\lambda(\varphi'\otimes\psi'),
 \bullet_\lambda=\circ_\lambda(\varphi'\otimes\psi'), \varphi\varphi', \psi\psi')$$
 is a BiHom-tridendriform conformal superalgebra.
\end{prop}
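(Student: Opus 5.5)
The plan is to mimic the second part of the proof of Theorem \ref{tsa}: verify each axiom of Definition \ref{tdd} for $T_{\varphi',\psi'}$ by pushing the twisting maps inside and reducing to the corresponding axiom of $T$ applied to suitably twisted elements. Throughout I use three facts. First, $\varphi'$ and $\psi'$ are even, so $|\varphi'(x)|=|\psi'(x)|=|x|$ and all sign factors $(-1)^{|x||y|}$ are unchanged. Second, $\varphi'$ and $\psi'$ are endomorphisms of $T$, i.e. multiplicative for each of $\dashv_\lambda,\vdash_\lambda,\circ_\lambda$. Third, $\varphi,\psi,\varphi',\psi'$ commute pairwise. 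The statement only says $\varphi'$ and $\psi'$ commute with each other and with $\partial$; I read "endomorphism of $T$" as including commutation with $\varphi,\psi$, as in the previous propositions.

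First come the routine checks. Conformal sesquilinearity of $\triangleleft_\lambda,\triangleright_\lambda,\bullet_\lambda$ follows from that of the original products, because $\varphi'\partial=\partial\varphi'$ and $\psi'\partial=\psi'\partial$. Commutativity $\varphi\varphi'\psi\psi'=\psi\psi'\varphi\varphi'$ is immediate from pairwise commutation. For multiplicativity, e.g.
$$\varphi\varphi'(x\triangleleft_\lambda y)=\varphi\varphi'\big(\varphi'(x)\dashv_\lambda\psi'(y)\big)=\varphi'\big(\varphi\varphi'(x)\big)\dashv_\lambda\psi'\big(\varphi\varphi'(y)\big)=\varphi\varphi'(x)\triangleleft_\lambda\varphi\varphi'(y).$$
The same computation works for $\psi\psi'$ and for the other two products.

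Next come the seven tridendriform identities; (\ref{fat}) shows the pattern. On the left,
$$(x\triangleleft_\lambda y)\triangleleft_{\lambda+\mu}\psi\psi'(z)=\varphi'\big(\varphi'(x)\dashv_\lambda\psi'(y)\big)\dashv_{\lambda+\mu}\psi'\psi\psi'(z)=\big(\varphi'^2(x)\dashv_\lambda\varphi'\psi'(y)\big)\dashv_{\lambda+\mu}\psi\big(\psi'^2(z)\big).$$
On the right, each term inside the bracket, such as $y\triangleright_\mu z=\varphi'(y)\vdash_\mu\psi'(z)$, is hit by $\psi'$ and becomes $\varphi'\psi'(y)\vdash_\mu\psi'^2(z)$. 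Hence the right side equals
$$\varphi\big(\varphi'^2(x)\big)\dashv_\lambda\big(Y\dashv_\mu Z+(-1)^{|y||z|}Y\vdash_\mu Z+(-1)^{|y||z|}Y\circ_\mu Z\big),\qquad Y=\varphi'\psi'(y),\ Z=\psi'^2(z).$$
Setting $X=\varphi'^2(x)$, both sides agree by (\ref{fat}) applied to $X,Y,Z$. The remaining six identities are handled identically. The substitution is always $X=\varphi'^2(x)$, $Y=\varphi'\psi'(y)$, $Z=\psi'^2(z)$, whether the nested product sits on the left (use multiplicativity of $\varphi'$) or on the right (use multiplicativity of $\psi'$).

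The main obstacle is bookkeeping rather than any conceptual point. I must check that in each identity the outer structure maps distribute correctly: $\psi\psi'$ on $z$ combines with the outer $\psi'$ to give $\psi(\psi'^2 z)$, and $\varphi\varphi'$ on $x$ combines with the outer $\varphi'$ to give $\varphi(\varphi'^2 x)$. I also need the signs to survive. This is exactly where evenness and pairwise commutation of $\varphi,\psi,\varphi',\psi'$ are used. Once these are recorded, each of the seven identities reduces verbatim to the corresponding axiom of $T$.
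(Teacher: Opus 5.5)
Your proposal is correct and follows the paper's route. The paper only says the proof is the same as the second part of Theorem \ref{tsa}: push the outer twisting maps inside using multiplicativity and pairwise commutation, then recognise each twisted axiom as the original one evaluated at $\varphi'^2(x)$, $\varphi'\psi'(y)$, $\psi'^2(z)$. Your reading of ``endomorphism'' as even and commuting with $\varphi,\psi$ matches the paper's definition of morphism of BiHom-tridendriform conformal superalgebras (note the harmless typo $\psi'\partial=\psi'\partial$, which should read $\partial\psi'=\psi'\partial$).
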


\begin{defn}
A morphism of BiHom-tridendriform  conformal superalgebras is an even $\mathbb{C}$-linear map $f : T\rightarrow T'$  such that, for all $x, y\in T$ :
\begin{eqnarray}
\varphi'\circ f&=&f\circ \varphi,\quad
\psi'\circ f=f\circ \psi,\\
 f(x\dashv_\lambda y)&=& f(x)\dashv'_\lambda f(y),\;
 f(x\vdash_\lambda y)= f(x)\vdash'_\lambda f(y),\;f(x\circ_\lambda y)= f(x)\circ'_\lambda f(y).
\end{eqnarray}
A morphism $f$ of  $T$ into itself is called an endomorphim.
A bijective endomorphism is said to be an automorphism.
\end{defn}

\begin{defn}
An even $\mathbb{C}[\partial]$-linear map $R : T\rightarrow T$ on a BiHom-tridendriform conformal superalgebra $(T, \dashv_\lambda, \vdash_\lambda, \circ_\lambda, \varphi, \psi)$ is said to be a
Rota-Baxter operator of weight $k\in\mathbb{K}$ if $R$ commutes with $\varphi$ and $\psi$, and
\begin{eqnarray}
 R(x)\dashv_\lambda R(y)&=&R\Big(R(x)\dashv_\lambda y+ x\dashv_\lambda R(y)+k x\dashv_\lambda y\Big),\\
 R(x)\vdash_\lambda R(y)&=&R\Big(R(x)\vdash_\lambda y+x\vdash_\lambda R(y)+k x\vdash_\lambda y\Big),\\
 R(x)\circ_\lambda R(y)&=&R\Big(R(x)\circ_\lambda y+x\circ_\lambda R(y)+k x\circ_\lambda y\Big),\label{initl}
\end{eqnarray}
for any $x, y\in\mathcal{H}(T)$.\\
In this case, the tuple $(T, \dashv_\lambda, \vdash_\lambda, \circ_\lambda, \varphi, \psi, R)$ is said to be {\it Rota-Baxter BiHom-tridendriform      conformal superalgebra of weight $k$}.\\
Whenever the product $''\circ_\lambda$ is trivial, we say get BiHom-dendriform superalgebra.
\end{defn}

\begin{thm}\label{car1}
 Let $(A, \circ_\lambda, \varphi, \psi, R)$ be a Rota-Baxter BiHom-associative conformal superalgebra of weight $k$.
Let us define three new operations $\dashv_\lambda, \vdash_\lambda$ and
$\ast_\lambda$ on $A$ by
$$x\dashv_\lambda y:=x\circ_\lambda R(y), \quad x\vdash_\lambda y:=(-1)^{|x||y|}R(x)\circ_\lambda y\quad\mbox{and}\quad x\ast_\lambda y:=(-1)^{|x||y|}k x\circ y,$$

Then, $(A, \dashv_\lambda, \vdash_\lambda, \ast_\lambda, \varphi, \psi)$ is a BiHom-tridendriform conformal superalgebra.
\end{thm}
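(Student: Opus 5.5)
Our plan is a direct verification of the axioms of Definition \ref{tdd}, each reduced to the Rota-Baxter identity together with the conformal BiHom-associativity $\varphi(x)\circ_\lambda(y\circ_\mu z)=(x\circ_\lambda y)\circ_{\lambda+\mu}\psi(z)$. First we dispose of the structural conditions. Sesquilinearity of $\dashv_\lambda,\vdash_\lambda,\ast_\lambda$ follows from that of $\circ_\lambda$, because $R$ is $\mathbb{C}[\partial]$-linear and even. Commutation $\varphi\psi=\psi\varphi$ is part of the hypotheses. Multiplicativity, e.g. $\varphi(x\dashv_\lambda y)=\varphi(x)\circ_\lambda \varphi R(y)=\varphi(x)\dashv_\lambda\varphi(y)$, uses $R\varphi=\varphi R$, $R\psi=\psi R$ and multiplicativity of $A$; this is implicit in Definition \ref{tdd}, so we assume $A$ is multiplicative.

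The core is the seven mixed associativity relations. For each of them we expand both sides with the definitions and move $R$ past $\varphi,\psi$. The relations involving only one occurrence of $R$, namely
\[
(x\vdash_\lambda y)\dashv_{\lambda+\mu}\psi(z), \quad (x\dashv_\lambda y)\ast_{\lambda+\mu}\psi(z), \quad (x\vdash_\lambda y)\ast_{\lambda+\mu}\psi(z), \quad (x\ast_\lambda y)\dashv_{\lambda+\mu}\psi(z), \quad (x\ast_\lambda y)\ast_{\lambda+\mu}\psi(z),
\]
become, after expansion, instances of BiHom-associativity applied to triples such as $(R(x),y,R(z))$ or $(x,R(y),z)$. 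The only work there is to match signs. Each sign has the form $(-1)^{|a||b|}$, and since $R$ is even, $|R(x)|=|x|$. We track the parity of every product, e.g. $|x\circ_\lambda y|=|x|+|y|$, to check that the exponent on the left agrees with the one prescribed in Definition \ref{tdd}. For instance,
\[
(x\vdash_\lambda y)\dashv_{\lambda+\mu}\psi(z)=(-1)^{|x||y|}(R(x)\circ_\lambda y)\circ_{\lambda+\mu}R\psi(z)=(-1)^{|x||y|}\varphi R(x)\circ_\lambda(y\circ_\mu R(z)),
\]
which we then compare with $(-1)^{|x||z|}\varphi(x)\vdash_\lambda(y\dashv_\mu z)$. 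Here the resulting exponent is $|x|(|y|+|z|)$, and we will need to confirm consistency with the paper's sign conventions, possibly adjusting by symmetric identities.

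The two relations \eqref{fat} and the $\vdash\vdash$ relation are where the Rota-Baxter identity enters. For \eqref{fat} we compute
\[
(x\dashv_\lambda y)\dashv_{\lambda+\mu}\psi(z)=(x\circ_\lambda R(y))\circ_{\lambda+\mu}\psi R(z)=\varphi(x)\circ_\lambda\bigl(R(y)\circ_\mu R(z)\bigr).
\]
We then replace $R(y)\circ_\mu R(z)$ by $R\bigl(R(y)\circ_\mu z+y\circ_\mu R(z)+k\,y\circ_\mu z\bigr)$. The three summands are exactly $y\dashv_\mu z$, $(-1)^{|y||z|}y\vdash_\mu z$ and $(-1)^{|y||z|}y\ast_\mu z$, up to the signs, and the product with $\varphi(x)\circ_\lambda R(\cdot)$ gives $\varphi(x)\dashv_\lambda(\cdots)$. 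Symmetrically, for $\varphi(x)\vdash_\lambda(y\vdash_\mu z)$ we obtain $R(\varphi(x))\circ_\lambda(R(y)\circ_\mu z)=(R(x)\circ_\lambda R(y))\circ_{\lambda+\mu}\psi(z)$, expand $R(x)\circ_\lambda R(y)$ by the Rota-Baxter identity, and recognize $\bigl((-1)^{|x||y|}x\dashv_\lambda y+x\vdash_\lambda y+x\ast_\lambda y\bigr)\vdash_{\lambda+\mu}\psi(z)$.

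The main obstacle is the sign bookkeeping. The super signs in the definitions of $\vdash$ and $\ast$, together with those in Definition \ref{tdd}, must cancel exactly, and the weight term must carry the factor $(-1)^{|x||y|}$ consistently in every identity. We will first verify everything in the purely even case, which is straightforward, and then redo each identity recording parities, using evenness of $R,\varphi,\psi$ and $|x\circ_\lambda y|=|x|+|y|$.
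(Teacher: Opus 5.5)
Your proposal is correct and follows the paper's route: a direct axiom-by-axiom verification. Only \eqref{fat} and the $\vdash\vdash$ relation use the Rota--Baxter identity, and the remaining relations reduce to conformal BiHom-associativity (the paper writes out only \eqref{fat} and says the others are proved the same way). The sign you left open in $(x\vdash_\lambda y)\dashv_{\lambda+\mu}\psi(z)$ closes directly, since $(-1)^{|x||z|}(-1)^{|x|(|y|+|z|)}=(-1)^{|x||y|}$, so no symmetric identities are needed (none are available anyway, as $A$ is not commutative); and you are right that multiplicativity of $\varphi,\psi$ with respect to $\circ_\lambda$ must be added as a hypothesis, a point the paper passes over.
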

\begin{proof}
Let us prove axiom (\ref{fat}). For any $x, y\in\mathcal{H}(A)$, we have
\begin{eqnarray}
(x\dashv_\lambda y)\dashv_{\lambda+\mu}\psi(z)&=&(x\circ_\lambda R(y))\circ_{\lambda+\mu}R\psi(z)\nonumber\\
&=&\varphi(x)\circ_\lambda (R(y)\circ_{\mu}R(z))\nonumber\\
&=&\varphi(x)\circ_\lambda R\Big(R(y)\circ_\lambda z+y\circ_\lambda R(z)+k y\circ_\lambda z\Big)\nonumber\\
&=&\varphi(x)\dashv_\lambda((-1)^{|y||z|}y\vdash_\mu z+y\dashv_\mu z+(-1)^{|y||z|}y\ast_\mu z).\nonumber
\end{eqnarray}
The others are proved in the same way.
\end{proof}

\begin{prop}
 Let $(A, \circ_\lambda, \varphi, \psi, R)$ be a Rota-Baxter BiHom-associative conformal superalgebra of weight $k$. Let us
define the multiplications $\dashv_\lambda$ and $\vdash_\lambda$ by
$$x\dashv_\lambda y:=x\circ_\lambda R(y)+k(-1)^{|x||y|} x\circ_\lambda y \quad\mbox{and}\quad x\vdash_\lambda y:=(-1)^{|x||y|}R(x)\circ_\lambda y.$$
Then $(A, \dashv_\lambda, \vdash_\lambda, \varphi, \psi)$ is a BiHom-dendriform conformal superalgebra.
\end{prop}
% % % % \begin{proof}
% % % % The axioms are checked as in the proof of Proposition \ref{car1}.
% % % % \end{proof}
\begin{cor}
 Let $(A, \circ_\lambda, \varphi, \psi, R)$ be a Rota-Baxter BiHom-associative conformal superalgebra of weight $0$. We define new even operations as :
$\dashv_\lambda : A\times A\rightarrow A$ and $\vdash_\lambda : A\times A\rightarrow A$ on $A$ by
$$x\dashv_\lambda y:=x\circ_\lambda R(y)\quad\mbox{and}\quad x\vdash_\lambda y:=(-1)^{|x||y|}R(x)\circ_\lambda y.$$
Then, $(A, \dashv_\lambda, \vdash_\lambda, \varphi, \psi)$ is a BiHom-dendriform conformal superalgebra.
\end{cor}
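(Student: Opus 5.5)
The corollary follows by specializing Theorem \ref{car1} to weight $k=0$. I would apply Theorem \ref{car1} to $(A, \circ_\lambda, \varphi, \psi, R)$ with $k=0$. The third operation is then
$$x\ast_\lambda y=(-1)^{|x||y|}\,0\cdot x\circ_\lambda y=0,$$
so it vanishes identically. The operations $\dashv_\lambda$ and $\vdash_\lambda$ produced by the theorem are exactly those in the statement. Hence $(A, \dashv_\lambda, \vdash_\lambda, 0, \varphi, \psi)$ is a BiHom-tridendriform conformal superalgebra.

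Next I would use the convention stated after the definition of Rota-Baxter operators on BiHom-tridendriform conformal superalgebras: when the product $\circ_\lambda$ is trivial, one obtains a BiHom-dendriform conformal superalgebra. Concretely, set the third product to zero in Definition \ref{tdd}. Four axioms become $0=0$: those involving $\circ_\lambda$ on both sides, and those stating $(x\circ_\lambda y)\dashv\cdots=\varphi(x)\circ_\lambda\cdots$. What remains is the three BiHom-dendriform axioms together with the multiplicativity and sesquilinearity conditions for $\dashv_\lambda$ and $\vdash_\lambda$, namely:
\begin{eqnarray*}
(x\dashv_\lambda y)\dashv_{\lambda+\mu}\psi(z)&=&\varphi(x)\dashv_\lambda\big(y\dashv_\mu z+(-1)^{|y||z|}y\vdash_\mu z\big),\\
(x\vdash_\lambda y)\dashv_{\lambda+\mu}\psi(z)&=&(-1)^{|x||z|}\varphi(x)\vdash_\lambda(y\dashv_\mu z),\\
\varphi(x)\vdash_\lambda(y\vdash_\mu z)&=&\big((-1)^{|x||y|}x\dashv_\lambda y+x\vdash_\lambda y\big)\vdash_{\lambda+\mu}\psi(z).
\end{eqnarray*}

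As a consistency check, or as a self-contained alternative, I would verify the first identity directly. By BiHom-associativity and the weight-$0$ Rota-Baxter identity $R(y)\circ_\mu R(z)=R\big(R(y)\circ_\mu z+y\circ_\mu R(z)\big)$, together with $R\psi=\psi R$, we get
$$(x\circ_\lambda R(y))\circ_{\lambda+\mu}\psi R(z)=\varphi(x)\circ_\lambda\big(R(y)\circ_\mu R(z)\big),$$
and expanding the right-hand side gives the required expression. The other two identities follow in the same way, tracking the sign $(-1)^{|x||y|}$ attached to $\vdash$.

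The only point needing care is the sign bookkeeping in the $\vdash$-axioms. Since that is already handled by Theorem \ref{car1}, I expect no real obstacle: the corollary is a direct specialization of that theorem.
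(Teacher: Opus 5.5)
Your proposal is correct and follows the paper's route: the corollary is the case $k=0$ of Theorem~\ref{car1} (equivalently, of the weight-$k$ dendriform proposition just before it), where $\ast_\lambda\equiv 0$ and the axioms of Definition~\ref{tdd} reduce to the three dendriform identities, which your direct check confirms. One caveat, inherited from the paper rather than introduced by you: your check does not cover the multiplicativity conditions $\varphi(x\dashv_\lambda y)=\varphi(x)\dashv_\lambda\varphi(y)$, etc., and these hold only if $\varphi,\psi$ are multiplicative for $\circ_\lambda$, a hypothesis that neither the corollary nor Theorem~\ref{car1} states.
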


\begin{cor}
 Let $(A, \circ_\lambda, \varphi, \psi, R)$ be a Rota-Baxter BiHom-associative conformal superalgebra of weight $k$. We define new even operations as :
$\dashv_\lambda : A\times A\rightarrow A$ and $\vdash_\lambda : A\times A\rightarrow A$ on $A$ by
$$x\dashv_\lambda y:=x\circ_\lambda R(y)+k x\circ_\lambda y\quad\mbox{and}\quad x\vdash_\lambda y:=(-1)^{|x||y|}R(x)\circ_\lambda y.$$
Then, $(A, \dashv_\lambda, \vdash_\lambda, \varphi, \psi)$ is a BiHom-dendriform conformal superalgebra.
\end{cor}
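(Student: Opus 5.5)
The last corollary, with $x\dashv_\lambda y:=x\circ_\lambda R(y)+k\,x\circ_\lambda y$ and $x\vdash_\lambda y:=(-1)^{|x||y|}R(x)\circ_\lambda y$, should follow from Theorem \ref{car1} by folding the third operation into the first. Set $x\ast_\lambda y:=(-1)^{|x||y|}k\,x\circ_\lambda y$, so that $(A,\dashv'_\lambda,\vdash_\lambda,\ast_\lambda,\varphi,\psi)$ is BiHom-tridendriform, where $x\dashv'_\lambda y:=x\circ_\lambda R(y)$. The general principle is that in a BiHom-tridendriform conformal superalgebra the pair $(\dashv+\circ,\vdash)$, suitably signed, is BiHom-dendriform. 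Since the signs in Definition \ref{tdd} are not uniform, I would not rely on this abstract reduction. Instead I would verify the three BiHom-dendriform axioms directly in terms of $\circ_\lambda$ and $R$. These are the tridendriform axioms with $\circ$ trivial: the left axiom (\ref{fat}) with only $\dashv,\vdash$, the middle axiom, and the right axiom.

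The multiplicativity of $\dashv_\lambda,\vdash_\lambda$ with respect to $\varphi,\psi$ and their sesquilinearity follow at once from $R\varphi=\varphi R$, $R\psi=\psi R$, the $\mathbb{C}[\partial]$-linearity of $R$, and the corresponding properties of $\circ_\lambda$. The middle axiom does not involve $k$. One has
$$(x\vdash_\lambda y)\dashv_{\lambda+\mu}\psi(z)=\pm (R(x)\circ_\lambda y)\circ_{\lambda+\mu}\big(R\psi(z)+k\psi(z)\big),$$
and BiHom-associativity turns this into $\pm\varphi R(x)\circ_\lambda\big(y\circ_\mu (R(z)+kz)\big)$, which equals $\varphi(x)\vdash_\lambda(y\dashv_\mu z)$ once the signs are matched.

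For the left axiom, expand
$$(x\dashv_\lambda y)\dashv_{\lambda+\mu}\psi(z)=\big(x\circ_\lambda(R(y)+ky)\big)\circ_{\lambda+\mu}\big(R\psi(z)+k\psi(z)\big)$$
into four terms and apply BiHom-associativity to each, giving $\varphi(x)\circ_\lambda(\cdots)$. The term $R(y)\circ_\mu R(z)$ is then replaced via the Rota–Baxter identity by
$$R\big(R(y)\circ_\mu z+y\circ_\mu R(z)+k\,y\circ_\mu z\big).$$
The result should regroup as $\varphi(x)\circ_\lambda(R(w)+kw)$ with
$$w=R(y)\circ_\mu z+y\circ_\mu R(z)+k\,y\circ_\mu z=(-1)^{|y||z|}y\vdash_\mu z+y\dashv_\mu z.$$
This is exactly $\varphi(x)\dashv_\lambda\big(y\dashv_\mu z+(-1)^{|y||z|}y\vdash_\mu z\big)$. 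The right axiom is handled symmetrically: expand
$$\big((-1)^{|x||y|}x\dashv_\lambda y+x\vdash_\lambda y\big)\vdash_{\lambda+\mu}\psi(z),$$
note that $R$ applied to the bracket produces $R(x)\circ_\lambda R(y)$ by the Rota–Baxter identity, and then use BiHom-associativity to reach $\varphi(x)\vdash_\lambda(y\vdash_\mu z)$.

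The main obstacle is the left axiom, specifically the bookkeeping of the weight-$k$ cross terms $k\,R(y)\circ_\mu z$, $k\,y\circ_\mu R(z)$ and $k^2\,y\circ_\mu z$. It is also the place where the super signs are most error-prone: the definition of $\dashv$ in this corollary carries no sign on its $k$-term, whereas the preceding proposition has $k(-1)^{|x||y|}$. I would first check the purely even case, where all signs disappear, to confirm that the terms regroup. I would then track parities term by term, using that $R$, $\varphi$ and $\psi$ are even, so $|R(x)|=|x|$, in order to settle which sign convention makes the stated identities hold.
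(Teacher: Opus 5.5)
Your direct verification is correct and is essentially the argument the paper intends. The paper gives no separate proof of this corollary, which rests on the same BiHom-associativity plus Rota--Baxter computation as Theorem \ref{car1}; like the paper, you also tacitly need $\varphi,\psi$ to be multiplicative for $\circ_\lambda$ to obtain the multiplicativity required in Definition \ref{tdd}. The stated unsigned $k$-term is the right convention, so there is nothing left to settle: in the left axiom $(-1)^{|y||z|}y\vdash_\mu z=R(y)\circ_\mu z$, so no sign survives; in the middle axiom $(-1)^{|x||z|}(-1)^{|x|(|y|+|z|)}=(-1)^{|x||y|}$ is exactly the sign carried by $x\vdash_\lambda y$; in the right axiom both sides reduce to $(-1)^{|x||y|+|x||z|+|y||z|}(R(x)\circ_\lambda R(y))\circ_{\lambda+\mu}\psi(z)$. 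The folding reduction you set aside also goes through with the signs of Definition \ref{tdd}, since $x\circ_\lambda R(y)+(-1)^{|x||y|}x\ast_\lambda y$, with $\ast_\lambda$ from Theorem \ref{car1}, is exactly your $\dashv_\lambda$.
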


 The following result connects BiHom-tridendriform conformal superalgebras to BiHom-post-Lie conformal superalgebras.
\begin{prop}\label{tp}
Let $(T, \dashv_\lambda, \vdash_\lambda, \circ_\lambda, \varphi, \psi)$ be a regular  BiHom-tridendriform conformal superalgebra. Then, $(T, \ldb-_\lambda-\rdb, \bullet_\lambda, \varphi, \psi)$ is
a BiHom-post-Lie conformal superalgebra with
\begin{eqnarray}
 \ldb x_\lambda y\rdb&=&x\circ_\lambda y-(-1)^{|x||y|}\varphi^{-1}\psi(y)\circ_{-\lambda-\partial}\varphi\psi^{-1}(x),\\
 x\bullet_\lambda y&=&x\vdash_\lambda y-(-1)^{|x||y|}\varphi^{-1}\psi(y)\dashv_{-\lambda-\partial}\varphi\psi^{-1}(x),
\end{eqnarray}
 for any $x, y\in\mathcal{H}(T)$.
\end{prop}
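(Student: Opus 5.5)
We must check that $(T, \ldb-_\lambda-\rdb, \varphi, \psi)$ is a BiHom-Lie conformal superalgebra, that $(T,\bullet_\lambda,\varphi,\psi)$ is a multiplicative BiHom conformal superalgebra, and that the two compatibilities (\ref{pl4}) and (\ref{pl3}) of Definition \ref{hplad} hold.

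For the first point, the last four axioms of Definition \ref{tdd} involving only $\circ_\lambda$ include $(x\circ_\lambda y)\circ_{\lambda+\mu}\psi(z)=\varphi(x)\circ_\lambda(y\circ_\mu z)$. So $(T,\circ_\lambda,\varphi,\psi)$ is a regular BiHom-associative conformal superalgebra; sesquilinearity is part of the tridendriform data. Proposition \ref{alcsa} then says directly that $\ldb-_\lambda-\rdb$ is a regular BiHom-Lie conformal superalgebra. Multiplicativity of $\bullet_\lambda$ follows because $\varphi,\psi$ are multiplicative for $\dashv_\lambda,\vdash_\lambda$ and commute with $\varphi^{\pm1},\psi^{\pm1}$ and $\partial$. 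Sesquilinearity of $\bullet_\lambda$ follows from that of $\dashv,\vdash$, using the standard identity $(\partial a)_{-\lambda-\partial}b$-type rewriting, exactly as in Proposition \ref{alcsa}.

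The substance is (\ref{pl4}) and (\ref{pl3}). I would expand each side completely in terms of $\dashv,\vdash,\circ$. This produces monomials of the form $(u\,\star_\alpha v)\,\star'_{\beta}w$ or $u\,\star_\alpha(v\,\star'_\beta w)$, where the arguments are images of $x,y,z$ under words in $\varphi^{\pm1},\psi^{\pm1}$ and $\alpha,\beta$ range over $\lambda,\mu,-\lambda-\partial$, etc. Each monomial is then pushed into a normal form, say the "right-nested" form $\varphi(a)\star(b\star c)$, using the seven tridendriform identities.

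\begin{itemize}
\item The axioms (\ref{fat}) and the $\vdash\vdash$ axiom contain sums: for example, $\varphi(x)\dashv(y\dashv z+y\vdash z+y\circ z)$ is a single term $(x\dashv y)\dashv\psi(z)$. Their role is to recombine three terms into one.
\item The remaining axioms are one-to-one rewritings.
\end{itemize}

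Regularity is used to write every argument as $\varphi(\cdot)$ or $\psi(\cdot)$ as the axioms require. Sesquilinearity is used to move the $-\lambda-\partial$ spectral parameters, turning e.g. $a_{-\lambda-\partial}(b_\mu c)$ into the form demanded by the axioms after substituting $\lambda\mapsto -\lambda-\mu-\partial$.

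For (\ref{pl3}) I would compute $as_{\varphi,\psi}(\psi(x),\varphi(y),z)$ for $\bullet$. The four $\vdash\vdash$-type terms, via the third tridendriform axiom, produce $(x\dashv y+x\vdash y+x\circ y)\vdash\psi(z)$ pieces. When the $x\leftrightarrow y$ antisymmetrization is taken, the $\dashv$ and $\vdash$ pieces cancel pairwise, since $\bullet$ is built from $\vdash$ minus the twisted opposite of $\dashv$. What survives is the $\circ$-commutator acting through $\vdash$ and $\dashv$; this is exactly $\ldb\psi(x)_\lambda\varphi(y)\rdb\bullet_{\lambda+\mu}\psi(z)$.

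For (\ref{pl4}) I would expand $\varphi\psi(x)\bullet_\lambda\ldb y_\mu z\rdb$. The mixed axioms $(x\dashv y)\circ\psi(z)=\pm\varphi(x)\circ(y\vdash z)$, $(x\vdash y)\circ\psi z=\varphi x\vdash(y\circ z)$ and $(x\circ y)\dashv\psi z=\pm\varphi x\circ(y\dashv z)$ should match its eight terms one-to-one with the eight terms of the two brackets on the right.

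The main obstacle is bookkeeping rather than ideas. The signs $(-1)^{|x||y|}$ must be tracked, and they appear in several of the axioms in slightly nonstandard places. The spectral parameters after conformal "transposition" must also be tracked, e.g. $\varphi^{-1}\psi(\ldb y_\mu z\rdb)\dashv_{-\lambda-\partial}\cdots$, whose arguments force using the axioms with $\lambda$ replaced by $-\lambda-\mu-\partial$. I would first carry out the whole computation in the unsuperized, untwisted case ($\varphi=\psi=\mathrm{id}$, all elements even), to fix the matching of terms. After that I would reinstate the maps, noting that each monomial acquires the same total twist on both sides, and reinstate the signs via the Koszul rule.
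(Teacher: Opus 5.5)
Your plan follows the paper's route: Proposition~\ref{alcsa} applied to $(T,\circ_\lambda,\varphi,\psi)$ gives the BiHom-Lie part, and then (\ref{pl4}) and (\ref{pl3}) are expanded in $\dashv,\vdash,\circ$ and cancelled using Definition~\ref{tdd}. The twists and spectral parameters do line up. For example, $(\varphi^{-1}\psi(y)\dashv_{-\lambda-\partial}\varphi(x))\circ_{\lambda+\mu}\psi(z)=(\varphi^{-1}\psi(y)\dashv_{\mu}\varphi(x))\circ_{\lambda+\mu}\psi(z)$ pairs with $\psi(y)\circ_\mu(\varphi(x)\vdash_\lambda z)$. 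Note also that (\ref{pl4}) has $4+8$ monomials, not $8+8$: two of the six cancelling pairs lie entirely inside the bracket terms.

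\textbf{The gap.} The step you postpone, ``reinstate the signs via the Koszul rule'', fails. The axioms of Definition~\ref{tdd} are not Koszul-signed, and $\bullet_\lambda$ carries the factor $(-1)^{|x||y|}$ in front of its $\dashv$-term, so the pairs do not cancel. Already for $\varphi=\psi=\mathrm{id}$ and $\lambda$-independent products, the six pairs leave
\[
\begin{aligned}
&x\bullet\ldb y,z\rdb-\ldb x\bullet y,z\rdb-(-1)^{|x||y|}\ldb y,x\bullet z\rdb\\
&\qquad=(1-(-1)^{|x||y|})\,y\circ(x\bullet z)-(1-(-1)^{|x||z|})(-1)^{|y||z|}\,z\circ(x\bullet y).
\end{aligned}
\]
The two pairs using $(x\vdash y)\circ z=x\vdash(y\circ z)$ cancel. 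The four pairs using $(x\circ y)\dashv z=(-1)^{|x||z|}x\circ(y\dashv z)$ and $(x\dashv y)\circ z=(-1)^{|x||y|}x\circ(y\vdash z)$ each leave a factor $(-1)^{|x||y|}-1$ or $(-1)^{|x||z|}-1$.

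\textbf{The statement is false as written.} The residue is nonzero for odd elements. Take $T=M_{1|1}(\mathbb{C})$, write $xy$ for the matrix product, and let $R=-\mathrm{id}$, which is a Rota--Baxter operator of weight $1$. Use the structure of Theorem~\ref{car1}: $x\dashv y=-xy$, $x\vdash y=-(-1)^{|x||y|}xy$, $x\circ y=(-1)^{|x||y|}xy$. One checks directly that all seven axioms of Definition~\ref{tdd} hold with $\varphi=\psi=\mathrm{id}$. Then $\ldb a,b\rdb=(-1)^{|a||b|}ab-ba$ and $a\bullet b=-(-1)^{|a||b|}(ab-ba)$. For $x=E_{12}$, $y=E_{21}$, $z=E_{11}$ you get
\[
x\bullet\ldb y,z\rdb=E_{11}-E_{22},\qquad \ldb x\bullet y,z\rdb=0,\qquad \ldb y,x\bullet z\rdb=-E_{11}-E_{22}.
\]
Hence the left side of (\ref{pl4}) equals $-2E_{22}\neq0$. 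Passing to $\mathbb{C}[\partial]\otimes T$ with $a_\lambda b=ab$ gives a conformal counterexample.

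The paper's proof has the same defect, since it only asserts that the twelve terms cancel. The same bookkeeping shows what to change. In the untwisted case, taking $x\bullet y=x\vdash y-y\dashv x$ (no sign) makes all six pairs in (\ref{pl4}) cancel exactly, and (\ref{pl3}) also holds. The natural candidate in general is $x\vdash_\lambda y-\varphi^{-1}\psi(y)\dashv_{-\lambda-\partial}\varphi\psi^{-1}(x)$. So more careful sign tracking alone cannot complete your plan: the definition of $\bullet_\lambda$ (or the sign conventions of Definition~\ref{tdd}) must be corrected first.
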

\begin{proof}
It is clear that $(T, \ldb-_\lambda-\rdb, \varphi, \psi)$ is a BiHom-Lie conformal superalgebra by Proposition \ref{alcsa}.
 Let us prove the first axiom. For any any $x, y, z\in\mathcal{H}(T)$, we have
 \begin{eqnarray}
 &&\hspace{1cm}\varphi\psi(x)\bullet_\lambda\ldb y_\mu z\rdb- \ldb (\psi(x)\bullet_\lambda y)_{\lambda+\mu} \psi(z)\rdb -(-)^{|x||y|}\ldb \psi(y)_\mu(\varphi(x)\bullet_\lambda z)\rdb  =\nonumber\\
 &&=\varphi\psi(x)\vdash_\lambda \ldb y_\mu z\rdb -(-1)^{|x|(|y|+|z|)} \varphi^{-1}\psi(\ldb y_\mu z\rdb)\dashv_{-\lambda-\partial}\varphi^2(x)\nonumber\\
&&\quad -(\psi(x)\bullet_\lambda y)\circ_{\lambda+\mu}\psi(z)+(-1)^{(|x|+|y|)|z|}\varphi^{-1}\psi^2(z)\circ_{-\lambda-\mu-\partial} \varphi\psi^{-1}(\psi(x)\bullet_\lambda y)\nonumber\\
&&\qquad-\psi(y)\circ_\mu (\varphi(x)\bullet_\lambda z)+(-1)^{|y|(|x|+|z|)}\varphi^{-1}\psi(\varphi(x)\bullet_\lambda z)_{-\mu-\partial}\varphi\psi^{-1}\psi(y)\nonumber\\
&&=\varphi\psi(x)\vdash_\lambda(y\circ_\mu z)-(-1)^{|y||z|}\varphi\psi(x)\dashv_{-\lambda-\partial}(\varphi^{-1}\psi(z)\circ_{-\mu-\partial} \varphi\psi^{-1}(y))\nonumber\\
&&-(-1)^{|x|(|y|+|z|)}(\varphi^{-1}\psi(y)\circ_\mu \varphi^{-1}\psi(z))\dashv_{-\lambda-\partial}\varphi^2+
(-1)^{|x||y|+|y||z|+|x||z|}(\varphi^{-2}\psi^2\circ_{-\mu-\partial} y)\dashv_{-\lambda-\mu}\varphi^2(x)\nonumber\\
&&\quad-(\psi(x)\vdash_\lambda y)\circ_{\lambda+\mu}\psi(z)+(-1)^{|x||y|}(\varphi^{-1}\psi(y)\dashv_{-\lambda-\partial} \varphi(x))\circ_{\lambda+\mu}\psi(z)\nonumber\\
&&+(-1)^{(|x|+|y|)|z|}\varphi^{-1}\psi^2(z)\circ_{-\lambda-\mu-\partial}(\varphi(x)\vdash_\lambda \varphi\psi^{-1}(y))\nonumber\\
&&-(-1)^{|x||y|+|y||z|+|x||z|}\varphi^{-1}\psi^2(z)\circ_{-\lambda-\mu-\partial}(y\dashv_{-\lambda-\partial}\varphi^2\psi^{-1}(x))\nonumber\\
&&\quad-(-)^{|x||y|}\psi(y)\circ_\mu(\varphi(x)\dashv_\lambda z)+(-1)^{|x|(|y|+|z|)}\psi(y)\circ_\mu(\varphi^{-1}\psi(z)\dashv_{-\lambda-\partial}\varphi^2\psi^{-1}(x))\nonumber\\
&&+(-1)^{|y||z|}(\psi(x)\dashv_\lambda \varphi^{-1}\psi(z))\circ_{-\mu-\partial}\psi(y)-(-1)^{|x||z|+|y||z|}(\varphi^{-1}\psi(z)\dashv_{-\lambda-\partial}\psi(x))\circ_{-\mu-\partial}\psi(y)\nonumber.
 \end{eqnarray}
Which cancels by axioms in Definition \ref{tdd}. The last axiom is proved similarly.
\end{proof}

\begin{cor}(\cite{T}, Proposition 3.12)\\
Let $(T, \dashv, \vdash, \circ, \varphi, \psi)$ be a regular  BiHom-tridendriform algebra. Then, $(T, \ldb-, -\rdb, \bullet, \varphi, \psi)$ is
a BiHom-post-Lie  algebra with
\begin{eqnarray}
 \ldb x, y\rdb&=&x\circ y-(-1)^{|x||y|}\varphi^{-1}\psi(y)\circ\varphi\psi^{-1}(x),\\
 x\bullet y&=&x\vdash y-(-1)^{|x||y|}\varphi^{-1}\psi(y)\dashv\varphi\psi^{-1}(x),
\end{eqnarray}
 for any $x, y\in\mathcal{H}(T)$.
\end{cor}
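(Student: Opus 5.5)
The corollary is the non-conformal specialization of Proposition \ref{tp}, so I would deduce it from that proposition rather than recompute. Take the ordinary regular BiHom-tridendriform superalgebra $(T,\dashv,\vdash,\circ,\varphi,\psi)$ and regard $T$ as a conformal one. Let $\partial$ act by zero on $T$, i.e. take $T$ to be a $\mathbb{C}[\partial]$-module with $\partial T=0$. Define the $\lambda$-products as constant polynomials in $\lambda$: $x\dashv_\lambda y:=x\dashv y$, $x\vdash_\lambda y:=x\vdash y$, $x\circ_\lambda y:=x\circ y$.

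The first step is to check that this is a BiHom conformal superalgebra. The sesquilinearity relations hold only in degree zero in $\lambda$. So to stay honest I would instead read every identity in Definition \ref{tdd} and in Definition \ref{hplad} at $\lambda=\mu=0$ and $\partial=0$. Equivalently, the algebraic content of Proposition \ref{tp} consists of polynomial identities in $\lambda,\mu$ that come only from the axioms in Definition \ref{tdd}. Specializing those identities at $\lambda=\mu=0$, $\partial=0$ gives exactly the non-conformal axioms. Under this specialization:
\begin{itemize}
\item the tridendriform axioms of Definition \ref{tdd} become the ordinary BiHom-tridendriform superalgebra axioms;
\item the bracket $\ldb x_\lambda y\rdb$ and the product $x\bullet_\lambda y$ of Proposition \ref{tp} become $\ldb x,y\rdb$ and $x\bullet y$ of the corollary, since the index $-\lambda-\partial$ becomes $0$;
\item the BiHom-post-Lie conformal axioms \eqref{pl4}, \eqref{pl3}, together with the skew-symmetry \eqref{sks} and the Jacobi identity \eqref{bhj}, become the ordinary BiHom-post-Lie superalgebra axioms.
\end{itemize}

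Concretely, the second step is to rerun the proof of Proposition \ref{tp} verbatim with all subscripts erased. The BiHom-Lie part then follows from Proposition \ref{alcsa} specialized in the same way. The compatibility axiom \eqref{pl4} is obtained from the displayed expansion, whose twelve terms cancel in pairs by the seven tridendriform identities. The axiom \eqref{pl3} is handled in the same manner.

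The main obstacle is making the specialization rigorous without $\partial$ breaking sesquilinearity. I expect the cleanest route is the observation above: every step in the proof of Proposition \ref{tp} is a formal manipulation using only the listed axioms and $\mathbb{C}$-bilinearity, so it remains valid after setting $\lambda=\mu=\partial=0$. Failing that, I would simply redo the cancellation directly in the non-conformal setting.
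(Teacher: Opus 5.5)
You follow the route the paper intends: the statement is placed, with no separate proof, as the $\lambda$-free shadow of the conformal tridendriform-to-post-Lie proposition just before it. But the step you take on trust fails as soon as odd elements are present. That step is the claim that the twelve terms cancel by the axioms of Definition~\ref{tdd}, and that \eqref{pl3} is ``handled in the same manner''. So the statement as written is false in the super case.

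Take the weight-zero Rota--Baxter model of Theorem~\ref{car1} without $\lambda$:
$A=\mathbb{C}[t]\otimes M_{1|1}$, with $E_{11},E_{22}$ even and $E_{12},E_{21}$ odd,
$\varphi=\psi=\mathrm{id}$, $R=\int_0^t\otimes\mathrm{id}$,
$x\dashv y=xR(y)$, $x\vdash y=(-1)^{|x||y|}R(x)y$, and third product $0$.
All axioms of Definition~\ref{tdd} are readily checked. Then $\ldb-,-\rdb=0$ and $x\bullet y=(-1)^{|x||y|}\big(R(x)y-yR(x)\big)$. For the odd elements $s=1\otimes E_{12}$ and $v=1\otimes E_{21}$ one gets $s\bullet s=0$, $s\bullet v=-t\otimes(E_{11}-E_{22})$ and $s\bullet(s\bullet v)=2t^2\otimes E_{12}$. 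So \eqref{pl3} with $x=y=s$, $z=v$ would require $2\,as(s,s,v)=4t^2\otimes E_{12}$ to equal $\ldb s,s\rdb\bullet v=0$.

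The source is the sign convention of Definition~\ref{tdd}. Relative to the standard Koszul convention, it builds a factor $(-1)^{|x||y|}$ into $\vdash$ and into the middle product; in the model above, $x\vdash y=(-1)^{|x||y|}x\succ y$ with $x\succ y=R(x)y$. With that convention the bracket in the statement is the correct one, but the product must be $x\bullet y=x\vdash y-\varphi^{-1}\psi(y)\dashv\varphi\psi^{-1}(x)$, without the sign. Both operations are then $(-1)^{|x||y|}$-twists of the standard ones. Such a simultaneous twist preserves \eqref{sks}, \eqref{bhj}, \eqref{pl4} and \eqref{pl3}, since all terms of each identity pick up the same sign: $(-1)^{|x||y|}$ in the skew-symmetry, and $(-1)^{|x||y|+|x||z|+|y||z|}$ in the three-variable identities. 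In the purely even case none of this matters.

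Separately, the transfer from the conformal setting should not rest on the claim that the conformal proof uses only the axioms and bilinearity. Moving a subscript $-\lambda-\partial$ across a product is a use of sesquilinearity. The rigorous device is the current algebra $\mathbb{C}[\partial]\otimes T$, with $a\diamond_\lambda b:=a\diamond b$ for $a,b\in T$ and each product $\diamond$, extended sesquilinearly. This is the construction in the paper's example after the definition of BiHom-Poisson conformal superalgebras.
\begin{itemize}
\item Each axiom of Definition~\ref{tdd} is a combination of terms $(x\diamond_\lambda y)\diamond_{\lambda+\mu}\psi(z)$ and $\varphi(x)\diamond_\lambda(y\diamond_\mu z)$. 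These transform identically under $x\mapsto\partial x$, $y\mapsto\partial y$, $z\mapsto\partial z$, so the axioms extend to $\mathbb{C}[\partial]\otimes T$.
\item Restricting to $1\otimes T$, where all $\lambda$-products are constant, recovers the non-conformal identities.
\end{itemize}
The same device carries the counterexample into the conformal setting. So the conformal proposition you wish to specialize has the same sign defect and cannot be used as a black box either.
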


\begin{thm}\label{tp}
Let $(T, \dashv_\lambda, \vdash_\lambda, \circ_\lambda, \varphi, \psi)$ be a  BiHom-tridendriform conformal superalgebra. Then, $(T, \bullet_\lambda, \varphi, \psi)$ is
a BiHom-associative conformal superalgebra, where
 $$x\bullet_\lambda y=x\vdash_\lambda y+(-1)^{|x||y|}x\dashv_\lambda y+ x\circ_\lambda y, $$
 for all $x, y, z\in\mathcal{H}(T)$.
\end{thm}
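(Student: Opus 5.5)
Sesquilinearity of $\bullet_\lambda$ is inherited from that of $\dashv_\lambda$, $\vdash_\lambda$, $\circ_\lambda$, since $\bullet_\lambda$ is a sum of these (the sign $(-1)^{|x||y|}$ is constant on homogeneous components). Multiplicativity also passes to $\bullet_\lambda$, because $\varphi$ and $\psi$ are multiplicative for each of the three operations. The real content is therefore the conformal BiHom-associativity
$$\varphi(x)\bullet_\lambda(y\bullet_\mu z)=(x\bullet_\lambda y)\bullet_{\lambda+\mu}\psi(z),\quad x,y,z\in\mathcal{H}(T).$$
My plan is to expand both sides fully into nine terms each, using bilinearity, and then match them using the seven axioms of Definition \ref{tdd}.

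Expanding the right-hand side, each of the three outer operations acts on each of the three summands of $x\bullet_\lambda y$. I would group the nine terms according to the outer operation.

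\textbf{Outer $\dashv$.} The outer $\dashv$ carries the sign $(-1)^{(|x|+|y|)|z|}$. Its three terms $(x\dashv y)\dashv\psi(z)$, $(x\vdash y)\dashv\psi(z)$, $(x\circ y)\dashv\psi(z)$ are handled by axiom (\ref{fat}), the $(\vdash,\dashv)$ axiom and the $(\circ,\dashv)$ axiom respectively. Each rewrites its term as $\varphi(x)$ acting on $(y\dashv_\mu z+\cdots)$.

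\textbf{Outer $\vdash$.} The three terms with outer $\vdash$ combine into exactly the right-hand side of the $(\vdash,\vdash)$ axiom. This gives $\varphi(x)\vdash_\lambda(y\vdash_\mu z)$.

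\textbf{Outer $\circ$.} The three terms with outer $\circ$ are handled by the axioms for $(\dashv,\circ)$, $(\vdash,\circ)$ and $(\circ,\circ)$.

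The aim is to collect the resulting nine terms into
$$\varphi(x)\vdash(y\bullet z)+(-1)^{|x|(|y|+|z|)}\varphi(x)\dashv(y\bullet z)+\varphi(x)\circ(y\bullet z).$$
The order of work: first verify the $\vdash$-block, which is immediate. Next, the $\dashv$-block must reproduce $(-1)^{|x|(|y|+|z|)}\varphi(x)\dashv_\lambda(y\bullet_\mu z)$. For this I would use (\ref{fat}) together with the $(\vdash,\dashv)$ and $(\circ,\dashv)$ identities. Those two identities produce $\varphi(x)\vdash(y\dashv z)$ and $\varphi(x)\circ(y\dashv z)$, which belong to the $\vdash$- and $\circ$-parts of the left-hand side rather than the $\dashv$-part. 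The terms must therefore be redistributed across blocks; this is bookkeeping, not a new idea. Finally, the $\circ$-block is matched using the remaining three axioms.

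The main obstacle is the sign bookkeeping. The axioms of Definition \ref{tdd} carry parity signs placed somewhat irregularly, for instance $(-1)^{|y||z|}$ inside (\ref{fat}) and $(-1)^{|x||z|}$ in the $(\vdash,\dashv)$ axiom. These must combine with the $(-1)^{|x||y|}$ built into $\bullet_\lambda$ so that each term appears with the same sign on both sides. I would first run the computation with all parities even, reducing to the classical BiHom-tridendriform statement (the non-super, non-conformal analogue of Theorem \ref{car1} read backwards). Then I would track the signs term by term, using $|y\dashv z|=|y|+|z|$ and the evenness of $\varphi$ and $\psi$. If a sign mismatch appears, I would take it as indicating the intended sign convention in the axioms, and adjust by reading the axioms in the form that makes Theorem \ref{car1} hold.
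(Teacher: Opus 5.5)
Your proposal is correct and takes the same route as the paper: expand $\varphi(x)\bullet_\lambda(y\bullet_\mu z)$ and $(x\bullet_\lambda y)\bullet_{\lambda+\mu}\psi(z)$ into nine terms each, then match them using the seven axioms of Definition \ref{tdd}. The signs also work out exactly as the axioms are written. Applying (\ref{fat}) to $(-1)^{(|x|+|y|)|z|+|x||y|}(x\dashv_\lambda y)\dashv_{\lambda+\mu}\psi(z)$ yields precisely the three $\varphi(x)\dashv_\lambda(\cdots)$ terms with sign $(-1)^{|x|(|y|+|z|)}$. Likewise, the signs in the $(\vdash,\dashv)$, $(\dashv,\circ)$ and $(\circ,\dashv)$ axioms combine with those built into $\bullet_\lambda$ to give the matching signs, so your fallback of adjusting sign conventions is never needed.
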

\begin{proof}
The sesquilinearity of the new product is easy to prove. Now,
 for any $x, y, z\in\mathcal{H}(T)$,
 \begin{eqnarray}
 &&\hspace{2cm}\varphi(x)_\lambda\bullet(y\bullet_\mu z)-(x\bullet_\lambda y)\bullet_{\lambda+\mu} \psi(z)=\nonumber\\
 &&=\varphi(x)\vdash_\lambda(y\vdash_\mu z)+(-1)^{|y||z|}\varphi(x)\vdash_\lambda(y\dashv_\mu z)+\varphi(x)\vdash_\lambda(y\circ_\mu z)\nonumber\\
&&+(-1)^{|x||y|}(-1)^{|x||z|}\varphi(x)\dashv_\lambda(y\vdash z)
+(-1)^{|x||y|+|x||z|+|y||z|}\varphi(x)\dashv_\lambda(y\dashv_\mu z)\nonumber\\
&&\quad+(-1)^{|x|(|y|+|z|)}\varphi(x)\dashv_\lambda(y\circ_\mu z)+\varphi(x)\circ_\lambda(y\vdash_\mu z)+(-1)^{|y||z|}\varphi(x)\circ_\lambda(y\dashv_\mu z)\nonumber\\
&&
+\varphi(x)\circ_\lambda(y\circ_\mu z)-(x\vdash_\lambda y)\vdash_{\lambda+\mu}\psi(z)-(-1)^{|x||y|}(x\dashv_\lambda y)\vdash_{\lambda+\mu}\psi(z)-(x\circ_\lambda y)\vdash_{\lambda+\mu}\psi(z)\nonumber\\
&&\quad-(-1)^{(|x|+|y|)|z|}(x\vdash_\lambda y)\dashv_{\lambda+\mu}\psi(z)-(-1)^{|x||z|+|y||z|+|x||y|}(x\dashv_\lambda y)\dashv_{\lambda+\mu}\psi(z)\nonumber\\
&&-(-1)^{(|x|+|y|)|z|}(x\circ_\lambda y)\dashv_{\lambda+\mu}\psi(z)-(x\vdash_\lambda y)\circ_{\lambda+\mu}\psi(z)-(-1)^{|x||y|}(x\dashv_\lambda y)\circ_{\lambda+\mu}\psi(z)\nonumber\\
&&\quad-(x\circ_\lambda y)\circ_{\lambda+\mu}\psi(z).\nonumber
 \end{eqnarray}
By axioms in Definition \ref{tdd}, the proof follows.
\end{proof}

\begin{cor}
Let $(T, \dashv_\lambda, \vdash_\lambda, \circ_\lambda, \varphi, \psi)$ be a  BiHom-tridendriform conformal algebra. Then, $(T, \ldb-_\lambda-\rdb, \bullet_\lambda,  \varphi, \psi)$ is
a BiHom-Poisson conformal algebra, where
 \begin{eqnarray}
 x\bullet_\lambda y&=&x\vdash_\lambda y+(-1)^{|x||y|}x\dashv_\lambda y+ x\circ_\lambda y,\\
 \ldb x_\lambda y\rdb&=&x\bullet_\lambda y-(-1)^{|x||y|}\varphi^{-1}\psi(y)\bullet_{-\lambda-\partial}\varphi\psi^{-1}(x),
 \end{eqnarray}
 for all $x, y, z\in\mathcal{H}(T)$.
\end{cor}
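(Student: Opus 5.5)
The plan is to assemble the corollary from two results already stated, and then verify the single remaining axiom, the Leibniz rule. Since the commutator uses $\varphi^{-1}$ and $\psi^{-1}$, I take $T$ to be regular, as the formula for $\ldb-_\lambda-\rdb$ implicitly requires.

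\textbf{Step 1 (the two underlying structures).} By Theorem \ref{tp}, $(T,\bullet_\lambda,\varphi,\psi)$ is a BiHom-associative conformal superalgebra. It is regular because $\varphi,\psi$ are automorphisms of each of $\dashv_\lambda,\vdash_\lambda,\circ_\lambda$, hence of $\bullet_\lambda$. Proposition \ref{alcsa} then shows that the commutator $\ldb x_\lambda y\rdb=x\bullet_\lambda y-(-1)^{|x||y|}\varphi^{-1}\psi(y)\bullet_{-\lambda-\partial}\varphi\psi^{-1}(x)$ makes $(T,\ldb-_\lambda-\rdb,\varphi,\psi)$ a regular BiHom-Lie conformal superalgebra. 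Sesquilinearity of $\bullet_\lambda$ follows term by term from that of the three tridendriform products, since $\varphi,\psi$ commute with $\partial$.

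\textbf{Step 2 (the compatibility).} It remains to check the BiHom-Leibniz rule linking $\ldb-_\lambda-\rdb$ and $\bullet_\lambda$, in the form used for BiHom-Poisson conformal algebras (cf. \cite{S}):
\[
\ldb \psi(x)_\lambda (y\bullet_\mu z)\rdb=\ldb x_\lambda y\rdb\bullet_{\lambda+\mu}\psi(z)+(-1)^{|x||y|}\varphi(y)\bullet_\mu\ldb \psi\text{-twisted } x_\lambda z\rdb,
\]
with the exact placement of $\varphi,\psi$ copied from the definition adopted in the paper. This rule is really a statement about the commutator of a regular BiHom-associative conformal superalgebra, so I prove it at that level of generality. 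I expand both sides using only the definition of the commutator. Each side then becomes a sum of four triple products of the shapes $a\bullet(b\bullet c)$ and $(a\bullet b)\bullet c$, with $x,y,z$ permuted and twisted by powers of $\varphi^{\pm1},\psi^{\pm1}$. I use BiHom-associativity $\varphi(a)\bullet_\lambda(b\bullet_\mu c)=(a\bullet_\lambda b)\bullet_{\lambda+\mu}\psi(c)$ to move every term into left-normed form. After this, the terms cancel pairwise.

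\textbf{Main obstacle.} The hard step is the bookkeeping of the $\lambda$-parameters and the signs. Terms such as $\varphi^{-1}\psi(y\bullet_\mu z)\bullet_{-\lambda-\partial}\varphi(x)$ must be rewritten using sesquilinearity, with $\partial$ acting on the whole expression. The point to watch is that $\partial$ substituted into an inner bracket becomes $-\lambda-\mu-\partial$ after reassociation. I would first check the case $\varphi=\psi=\mathrm{id}$, where this is the classical fact that the commutator in an associative conformal superalgebra is a derivation. I would then reinsert the twists, using that $\varphi,\psi$ commute with each other and with $\partial$ and are multiplicative. If the twist placements do not match exactly, I would adjust by applying the automorphisms $\varphi^{\pm1}$ or $\psi^{\pm1}$ to the variables, which is legitimate because $T$ is regular.
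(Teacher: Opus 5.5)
Your proposal is correct and follows the paper's route. The paper's proof only combines the BiHom-associativity theorem for $\bullet_\lambda$ with Proposition~\ref{alcsa}; the Leibniz rule it leaves unmentioned is exactly the first theorem of Section~3 (the commutator of a regular BiHom-associative conformal superalgebra gives a BiHom-Poisson structure), proved there by the same expand-and-reassociate computation as your Step 2, so you may simply cite it, and you are right that regularity must be added to the hypotheses.

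Do state the identity as the paper's $\ldb\varphi\psi(x)_\lambda(y\bullet_\mu z)\rdb=\ldb\psi(x)_\lambda y\rdb\bullet_{\lambda+\mu}\psi(z)+(-1)^{|x||y|}\psi(y)\bullet_\mu\ldb\varphi(x)_\lambda z\rdb$ rather than your placeholder. It holds exactly as written: after BiHom-associativity and sesquilinearity, the two terms on the left and four on the right cancel in three pairs, so no reparametrization by $\varphi^{\pm1},\psi^{\pm1}$ is needed.
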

\begin{proof}
 It follows from Proposition \ref{alcsa} and Theorem \ref{tp}.
\end{proof}

Now we have the following definition.
\begin{defn}
 A commutative  BiHom-tridendriform conformal superalgebra is a tuple $(T, \star, \ast, \varphi, \psi)$ in which $(T, \ast,  \varphi, \psi)$
 is a commutative BiHom-associative conformal superalgebra and
  $\star : T\otimes T\rightarrow T$ is an even bilinear map such that :
\begin{eqnarray}
\varphi(x)\star_\lambda(y\ast_\mu z)&=&(x\star_\lambda y)\ast_{\lambda+\mu}\psi(z),\qquad\qquad\quad\label{d2}\\
 \varphi(x)\star_\lambda(y\star_\mu z)&=&(x\star_\lambda y+ (-1)^{|x||y|} y\star_{-\lambda-\partial} x+x\ast_\lambda y)\star_{\lambda+\mu}\psi(z),\label{d1}
\end{eqnarray}
for any $x, y, z\in\mathcal{H}(T)$.
\end{defn}

\begin{prop}\label{cca1}
Let $(A, \star_\lambda, \ast_\lambda, \varphi)$ be a commutative  Hom-tridendriform conformal superalgebra.
Let us define the even bilinear map $\bullet_\lambda : A\times A\rightarrow A[\lambda]$ by
\begin{eqnarray}
 x\bullet_\lambda y:=x\star_\lambda y+(-1)^{|x||y|} y\star_{-\lambda-\partial} x+x\ast_\lambda y.\nonumber
\end{eqnarray}
Then, $(A, \bullet_\lambda, \varphi)$ is a commutative Hom-associative conformal superalgebra.
\end{prop}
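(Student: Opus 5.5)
Proposition \ref{cca1} is stated in the Hom setting: a single structure map $\varphi$, i.e. $\psi=\varphi$ in the commutative BiHom-tridendriform definition. We must show two things for $\bullet_\lambda$: that it is commutative, and that it is Hom-associative, $\varphi(x)\bullet_\lambda(y\bullet_\mu z)=(x\bullet_\lambda y)\bullet_{\lambda+\mu}\varphi(z)$. Sesquilinearity of $\bullet_\lambda$ follows at once from that of $\star_\lambda$ and $\ast_\lambda$. For the term $(-1)^{|x||y|}y\star_{-\lambda-\partial}x$, we use the standard rule that $\partial x\star_\lambda y=-\lambda\, x\star_\lambda y$ turns into the correct shift once $\lambda$ is replaced by $-\lambda-\partial$. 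Commutativity is immediate. The sum $x\star_\lambda y+(-1)^{|x||y|}y\star_{-\lambda-\partial}x$ is symmetric under $x\leftrightarrow y$, $\lambda\mapsto-\lambda-\partial$, because $(-\partial-(-\lambda-\partial))=\lambda$. The term $x\ast_\lambda y$ is commutative by hypothesis.

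For Hom-associativity, the cleanest route is to reduce to the tridendriform result already proved, Theorem \ref{tp}. Set
$$x\vdash_\lambda y:=x\star_\lambda y,\qquad x\dashv_\lambda y:=y\star_{-\lambda-\partial}x,\qquad x\circ_\lambda y:=x\ast_\lambda y.$$
With these choices $\bullet_\lambda$ is exactly $x\vdash_\lambda y+(-1)^{|x||y|}x\dashv_\lambda y+x\circ_\lambda y$. It then suffices to check the seven axioms of Definition \ref{tdd} with $\psi=\varphi$, and Theorem \ref{tp} gives Hom-associativity. The axioms correspond as follows.

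The $\vdash\vdash$ axiom is exactly (\ref{d1}). The $\vdash\circ$ axiom $(x\vdash_\lambda y)\circ_{\lambda+\mu}\varphi(z)=\varphi(x)\vdash_\lambda(y\circ_\mu z)$ is exactly (\ref{d2}).

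The axioms involving $\dashv$ come from rewriting (\ref{d1}) and (\ref{d2}) with the variables permuted. Commutativity of $\ast$ turns $x\ast y$ into $y\ast x$ with a reversed $\lambda$-parameter. For example, $(x\circ_\lambda y)\dashv_{\lambda+\mu}\varphi(z)=\varphi(z)\star_{-\lambda-\mu-\partial}(x\ast_\lambda y)$. Applying (\ref{d2}) to this gives $(z\star x)\ast\varphi(y)$, and commutativity of $\ast$ turns that into $\varphi(x)\ast(y\dashv z)$. This is the required axiom up to the sign $(-1)^{|x||z|}$.

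The axioms $(x\dashv y)\dashv\varphi(z)$ and $(x\vdash y)\dashv\varphi(z)$ should together be equivalent to (\ref{d1}) applied to the triple $(z,x,y)$ or $(z,y,x)$. The axiom $(x\dashv y)\circ\varphi(z)=\pm\varphi(x)\circ(y\vdash z)$ should come from (\ref{d2}) combined with commutativity of $\ast$ and Hom-associativity of $\ast$. Multiplicativity of $\varphi$ with respect to $\star$ and $\ast$ is needed in this translation and should be assumed as part of the definition, as in Definition \ref{tdd}.

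The main obstacle is the $\dashv$-axioms. Individually they do not follow from (\ref{d1}), which only controls the sum $x\star y+(-1)^{|x||y|}y\star x+x\ast y$. If the reduction to Theorem \ref{tp} fails for this reason, the fallback is to expand both sides of the associativity identity directly into nine terms each and group them. Applying (\ref{d1}) to the triples $(x,y,z)$, $(y,z,x)$ and $(z,x,y)$ in their rotated $\lambda$-parameters, together with (\ref{d2}) and Hom-associativity and commutativity of $\ast$, should make all terms cancel. This follows the classical proof that Zinbiel (commutative dendriform) algebras give commutative associative algebras. The delicate bookkeeping is the substitution of $\lambda$-parameters such as $-\lambda-\mu-\partial$ under sesquilinearity, together with the Koszul signs from each transposition.
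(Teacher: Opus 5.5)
Your reduction to Theorem \ref{tp} (the tridendriform-to-associative theorem) is a different route from the paper's, and it works. As written, though, you abandon it at exactly the step that closes it. The claim that the $\dashv$-axioms of Definition \ref{tdd} ``individually do not follow from (\ref{d1})'' is false. On top of (\ref{d1}), the only extra input needed is the commutativity of $\bullet_\lambda$, which you already proved.

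With your dictionary, the axiom $(x\vdash_\lambda y)\dashv_{\lambda+\mu}\varphi(z)=(-1)^{|x||z|}\varphi(x)\vdash_\lambda(y\dashv_\mu z)$ reads $\varphi(z)\star_{-\lambda-\mu-\partial}(x\star_\lambda y)=(-1)^{|x||z|}\varphi(x)\star_\lambda(z\star_{-\mu-\partial}y)$.
By (\ref{d1}) for the triple $(z,x,y)$, the left side equals $(z\bullet_{-\lambda-\partial}x)\star_{-\mu-\partial}\varphi(y)$. By (\ref{d1}) for $(x,z,y)$, the right side equals $(-1)^{|x||z|}(x\bullet_\lambda z)\star_{-\mu-\partial}\varphi(y)$. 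These agree because $x\bullet_\lambda z=(-1)^{|x||z|}z\bullet_{-\lambda-\partial}x$.

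For (\ref{fat}), the argument of $\varphi(x)\dashv_\lambda$ on the right is $(-1)^{|y||z|}y\bullet_\mu z=z\bullet_{-\mu-\partial}y$. On the left, inside $\varphi(z)\star_{-\lambda-\mu-\partial}(\cdot)$ the operator $\partial$ acts as $-\lambda-\mu$, so $y\star_{-\lambda-\partial}x$ becomes $y\star_\mu x$. The axiom is then exactly (\ref{d1}) for $(z,y,x)$, evaluated at $-\lambda-\mu-\partial$.

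The two $\ast$-mixed $\dashv$-axioms need only (\ref{d2}) and commutativity of $\ast$, not Hom-associativity. Your chain for $(x\circ_\lambda y)\dashv_{\lambda+\mu}\varphi(z)$ is not right as written: commutativity turns $(z\star x)\ast\varphi(y)$ into $\pm\varphi(y)\ast(x\dashv z)$, not $\varphi(x)\ast(y\dashv z)$. Work from the right side instead:
first, commutativity of $\ast$ gives $(-1)^{|x||y|}(z\star_{-\mu-\partial}y)\ast_{-\lambda-\partial}\varphi(x)$;
second, (\ref{d2}) turns this into $(-1)^{|x||y|}\varphi(z)\star_{-\lambda-\mu-\partial}(y\ast_\mu x)$;
third, commutativity of $\ast$ inside, where $x\ast_{-\mu-\partial}y$ becomes $x\ast_\lambda y$, gives the left side.

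Finally, do not add multiplicativity of $\varphi$ as a hypothesis. None of these translations uses it, and the proof of Theorem \ref{tp} uses only the seven identities of Definition \ref{tdd}, not the morphism conditions. Invoke that proof directly; assuming multiplicativity would only prove a weaker statement.

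The paper proves the proposition by your fallback. It expands $(x\bullet_\lambda y)\bullet_{\lambda+\mu}\varphi(z)-\varphi(x)\bullet_\lambda(y\bullet_\mu z)$ completely and cancels groups of terms using (\ref{d1}), (\ref{d2}), and commutativity and Hom-associativity of $\ast$. It then removes four leftover terms by adding and subtracting auxiliary terms and applying (\ref{d1}) and (\ref{d2}) again.

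Your route uses the same inputs but packages them conceptually. It shows that a commutative Hom-tridendriform conformal superalgebra is a Hom-tridendriform one with $\dashv$ the opposite of $\vdash$, the conformal analogue of ``Zinbiel implies dendriform.'' Associativity is then inherited from Theorem \ref{tp}, and the delicate $\lambda$-substitutions are confined to short, local identities instead of one long expansion.
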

\begin{proof}
It is clear that the product $\bullet_\lambda$ is commutative. Now, we prove the conformal Hom-associativity.
One has for any $x, y, z\in\mathcal{H}(A)$,
 \begin{eqnarray}
 &&\hspace{2cm} (x\bullet_\lambda y)\bullet_{\lambda+\mu}\varphi(z)-\varphi(x)\bullet_\lambda( y\bullet_\mu z)=\nonumber\\
&&=(x\star_\lambda y+(-1)^{|x||y|} y\star_{-\lambda-\partial} x+x\ast_\lambda y)\star_{\lambda+\mu}\varphi(z)\nonumber\\
&&+(-1)^{(|x|+|y|)|z|}\varphi(z)\star_{-\lambda-\mu-\partial} (x\star_\lambda y+(-1)^{|x||y|}y\star_{-\lambda-\partial} x+x\ast_\lambda y)\nonumber\\
&&+(x\star_\lambda y+(-1)^{|x||y|}y\star_{-\lambda-\partial} x+x\ast_\lambda y)\ast_{\lambda+\mu}\varphi(z)\nonumber\\
&&-\varphi(x)\star_\lambda(y\star_\mu z+(-1)^{|y||z|}z\star_{-\mu-\partial} y+y\ast_\mu z)\nonumber\\
&&-(-1)^{|x|(|y|+|z|)}(y\star_\mu z+(-1)^{|y||z|}z\star_{-\mu-\partial} y+y\ast_\mu z)\star_{-\lambda-\partial}\varphi(x)\nonumber\\
&&-\varphi(x)\ast_\lambda(y\star_\mu z+(-1)^{|y||z|}z\star_{-\mu-\partial} y+y\ast_\mu z).\nonumber
\end{eqnarray}
By expanding the rigth hand side and grouping the terms, we get
\begin{eqnarray}
&&\hspace{3cm}(x\bullet_\lambda y)\bullet_{\lambda+\mu}\varphi(z)-\varphi(x)\bullet_\lambda( y\bullet_\mu z)=\nonumber\\
&&=\Big((x\star_\lambda y)\star_{\lambda+\mu}\varphi(z)+(-1)^{|x||y|} (y\star_{-\lambda-\partial} x)\star_{\lambda+\mu}\varphi(z)+(x\ast_\lambda y)\star_{\lambda+\mu}\varphi(z)-\varphi(x)\star_\lambda(y\star_\mu z)\Big)\nonumber\\
&&+\Big((-1)^{|x||y|+(|x|+|y|)|z|}\varphi(z)\star_{-\lambda-\mu-\partial}(y\star_{-\lambda-\partial} x)-(-1)^{|x|(|y|+|z|)}(y\star_\mu z)\star_{-\lambda-\partial}\varphi(x)\nonumber\\
&&-(-1)^{|x|(|y|+|z|)+|y||z|}(z\star_{-\mu-\partial} y)\star_{-\lambda-\partial}\varphi(x)
-(-1)^{|x|(|y|+|z|)}(y\ast_\mu z)\star_{-\lambda-\partial}\varphi(x)\Big)\nonumber\\
&&+\Big((-1)^{(|x|+|y|)|z|}\varphi(z)\star_{-\lambda-\mu-\partial}(x\ast_\lambda y)-(-1)^{|y||z|}\varphi(x)\ast_\lambda(z\star_{-\mu-\partial} y)\Big)+\Big((x\star_\lambda y)\ast_{\lambda+\mu}\varphi(z)\nonumber\\
&&-\varphi(x)\star_\lambda(y\ast_\mu z)\Big)+\Big((x\ast_\lambda y)\ast_{\lambda+\mu}\varphi(z)-\varphi(x)\ast_\lambda(y\ast_\mu z)\Big)
\nonumber\\
&&+(-1)^{(|x|+|y|)|z|}\varphi(z)\star_{-\lambda-\mu-\partial}(x\star_\lambda y)-\varphi(x)\ast_\lambda(y\star_\mu z)
+(-1)^{|x||y|}(y\star_{-\lambda-\partial} x)\ast_{\lambda+\mu}\varphi(z)\nonumber\\
&&-(-1)^{|y||z|}\varphi(x)\star_\lambda(z\star_{-\mu-\partial} y)
\nonumber.
\end{eqnarray}
By commutativity and (\ref{d1}) the first two brackets cancels. By commutativity and (\ref{d2}) the next two ones cancel. The next one cancels  by conformal Hom-associativity. Thus,
\begin{eqnarray}
(x\bullet_\lambda y)\bullet_{\lambda+\mu}\varphi(z)-\varphi(x)\bullet_\lambda( y\bullet_\mu z)&=&
+(-1)^{(|x|+|y|)|z|}\varphi(z)\star_{-\lambda-\mu-\partial}(x\star_\lambda y)-\varphi(x)\ast_\lambda(y\star_\mu z)\nonumber\\
&&+(-1)^{|x||y|}(y\star_{-\lambda-\partial} x)\ast_{\lambda+\mu}\varphi(z)\nonumber\\
&&\quad-(-1)^{|y||z|}\varphi(x)\star_\lambda(z\star_{-\mu-\partial} y)\nonumber.
\end{eqnarray}
Now, by adding and subtracting pairewise below four terms, and reorganizing, we obtain :
\begin{eqnarray}
&&\hspace{3cm}(x\bullet_\lambda y)\bullet_{\lambda+\mu}\varphi(z)-\varphi(x)\bullet_\lambda( y\bullet_\mu z)=\nonumber\\
&&=\Big((-1)^{(|x|+|y|)|z|}\varphi(z)\star_{-\lambda-\mu-\partial}(x\star_\lambda y)-(-1)^{|y||z|}(x\star_{-\lambda-\mu-\partial} z)\star_{-\mu-\partial}\varphi(y)\nonumber\\
&&\qquad-(-1)^{(|x|+|y|)|z|}(z\ast_{\lambda+\mu} x)\star_{-\mu-\partial}\varphi(y)-(-1)^{(|x|+|y|)|z|}(z\star_{-\lambda-\mu-\partial} x)\star_{-\mu-\partial}\varphi(y)\Big)\nonumber\\
&&\quad+\Big(-(-1)^{|y||z|}\varphi(x)\star_\lambda(z\star_{-\mu-\partial} y)+(-1)^{|y||z|}(x\star_{-\lambda-\mu-\partial} z)\star_{-\mu-\partial}\varphi(y)\nonumber\\
&&\qquad+(-1)^{(|x|+|y|)|z|}(z\ast_{\lambda+\mu} x)\star_{-\mu-\partial}\varphi(y)+(-1)^{(|x|+|y|)|z|}(z\star_{-\lambda-\mu-\partial} x)\star_{-\mu-\partial}\varphi(y)\Big)\nonumber\\
&&\quad+\Big((-1)^{|x||y|}(y\star_{-\lambda-\partial} x)\ast_{\lambda+\mu}\varphi(z)+(-1)^{|x||y|}\varphi(y)\star_\mu(x\ast_{\lambda+\mu} z)\Big)\nonumber\\
&&\qquad+\Big(-\varphi(x)\ast_\lambda(y\star_\mu z)-(-1)^{|x||y|}\varphi(y)\star_\mu(x\ast_{\lambda+\mu} z)\Big)
\nonumber.
 \end{eqnarray}
By commutativity and (\ref{d1}) the first two brackets cancel and by commutativity and (\ref{d2}) the last two ones cancel also.
\end{proof}

\section{Construction theorems}
\begin{defn}
 A BiHom-Poisson conformal superalgebra is quintuple $(P, \ast_\lambda, [-_\lambda -], \varphi, \psi)$, where
 \begin{enumerate}
  \item [i)]
 $(P, \ast_\lambda, \varphi, \psi)$ is a BiHom-associative conformal superalgebra
 \item [ii)]
 $(P, [-_\lambda -], \varphi, \psi)$ is a BiHom-Lie conformal superalgebra, and
 \item [iii)]
 the conformal BiHom-Leibniz identity holds  i.e.
 \begin{eqnarray}
 {[\varphi\psi(x)_\lambda(y\ast_\mu z)]=[\psi(x)_\lambda y]\ast_{\lambda+\mu} \psi(z)+(-1)^{|x||y|}\psi(y)\ast_\mu[\varphi(x)_\lambda z]},
 \end{eqnarray}
 \end{enumerate}
 for any $x, y\in\mathcal{H}(P)$.
\end{defn}

\begin{exam}
Let $(P, \ast, [-, -], \varphi, \psi)$ be an ordinary  BiHom-Poisson superalgebra. Then,
$P=\mathbb{C}[\partial]\otimes A$, with $\partial\varphi=\varphi\partial$ and $\partial\psi=\psi\partial$, equipped with $\lambda$-operations
$$x\ast_\lambda y=x\ast y,\quad [x_\lambda y]=[x, y], \forall x, y \in \mathcal{H}(P),$$
is a BiHom-Poisson conformal superalgebra.
\end{exam}

\begin{exam}
 Let $(P', \circ'_\lambda, [-_\lambda -]', Id)$ be a  BiHom-Poisson conformal superalgebra and $P$ a vector superspace together
 with  a $\mathbb{C}[\partial]$-linear bijective map $f : P\rightarrow P'$  such that
$$f(x\circ_\lambda y)=f(x)\circ' f(y)\quad\mbox{and}\quad f([x_\lambda y])=[f(x)_\lambda f(y)]',\; \forall x, y\in \mathcal{H}(P).$$
Then, $(P, \circ_\lambda, [-_\lambda -],  Id)$ is a  BiHom-Poisson conformal superalgebra.
\end{exam}

\begin{defn}
Let $(P, \ast_\lambda, [-_\lambda -], \varphi, \psi)$ be a  BiHom-Poisson conformal superalgebra.  An even $\mathbb{C}[\partial]$-linear map $R : P\rightarrow P$  is said to be a Rota-Baxter operator on $P$ if it is
a Rota-Baxter operator for each of the products
$\ast_\lambda$ and $[-_\lambda -]$.
\end{defn}

Now, in what follows, we give some constructions of BiHom-Poisson conformal superalgebras from other structures.
\subsection{From BiHom-associative conformal superalgebras}

\begin{thm}
If $(A, \circ_\lambda, \varphi, \psi)$ is a regular BiHom-associative conformal superalgebra, then
$$(A, \circ_\lambda, [-_\lambda -], \varphi, \psi)$$
is a regular BiHom-Poisson conformal superalgebra, where
\begin{eqnarray}
 [x_\lambda y]=x\circ_\lambda y-(-1)^{|x||y|}\varphi^{-1}\psi(y)\circ_{-\lambda-\partial}\varphi\psi^{-1}(x),\nonumber
\end{eqnarray}
for all $x, y, z\in\mathcal{H}(A)$.
\end{thm}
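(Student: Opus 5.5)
By Definition of BiHom-Poisson conformal superalgebra, three things must be checked. Item i) is the hypothesis. Item ii) is Proposition \ref{alcsa}: the commutator bracket of a regular BiHom-associative conformal superalgebra makes $(A,[-_\lambda-],\varphi,\psi)$ a regular BiHom-Lie conformal superalgebra. So the only real work is the conformal BiHom-Leibniz identity iii). We also record that $\varphi,\psi$ are automorphisms of $\circ_\lambda$ by regularity, hence also of $[-_\lambda-]$. This is what allows $\varphi^{\pm1},\psi^{\pm1}$ to be moved freely through products.

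For iii), fix homogeneous $x,y,z$ and expand the left side $[\varphi\psi(x)_\lambda(y\circ_\mu z)]$ via the definition of the bracket:
\[
\varphi\psi(x)\circ_\lambda(y\circ_\mu z)-(-1)^{|x|(|y|+|z|)}\bigl(\varphi^{-1}\psi(y)\circ_\mu\varphi^{-1}\psi(z)\bigr)\circ_{-\lambda-\partial}\varphi^2(x).
\]
Expand the right side similarly into four terms:
\[
(\psi(x)\circ_\lambda y)\circ_{\lambda+\mu}\psi(z),\qquad (\varphi^{-1}\psi(y)\circ_{-\lambda-\partial}\varphi(x))\circ_{\lambda+\mu}\psi(z),
\]
\[
\psi(y)\circ_\mu(\varphi(x)\circ_\lambda z),\qquad \psi(y)\circ_\mu(\varphi^{-1}\psi(z)\circ_{-\lambda-\partial}\varphi^2\psi^{-1}(x)),
\]
with the appropriate signs. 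The key step is to pair terms using conformal BiHom-associativity in the form $\varphi(a)\circ_\lambda(b\circ_\mu c)=(a\circ_\lambda b)\circ_{\lambda+\mu}\psi(c)$:
\begin{itemize}
\item The first term on the left equals the first term on the right, taking $a=\psi(x)$.
\item The second term on the left matches the fourth term on the right, taking $a=\varphi^{-1}\psi(y)$, after applying the automorphism $\varphi^{-1}$ appropriately. Written as $\varphi(\varphi^{-1}\psi(y))\circ_\mu(\varphi^{-1}\psi(z)\circ_{\nu}\varphi^2\psi^{-1}(x))$, the fourth term becomes $(\varphi^{-1}\psi(y)\circ_\mu\varphi^{-1}\psi(z))\circ\varphi^2(x)$.
\item The remaining cross terms, the second and third on the right, are $(\varphi^{-1}\psi(y)\circ\varphi(x))\circ\psi(z)$ and $\psi(y)\circ(\varphi(x)\circ z)$. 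These cancel each other by associativity with $a=\varphi^{-1}\psi(y)$, $b=\varphi(x)$, $c=z$, provided their signs are opposite, which they are: $-(-1)^{|x||y|}$ versus $+(-1)^{|x||y|}$.
\end{itemize}

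The main obstacle is the bookkeeping of $\lambda$-parameters under sesquilinearity. For example, the terms with subscript $-\lambda-\partial$ sit inside an outer product, and $\partial$ must be interpreted correctly: inside $a\circ_\mu(b\circ_{-\lambda-\partial}c)$ the inner $\partial$ becomes $-\lambda-\mu-\partial$-type shifts relative to the outer variable. I would handle this by rewriting every term, via sesquilinearity, into the normal form $(u\circ_{\alpha}v)\circ_{\beta}w$ with $\alpha,\beta$ explicit polynomials in $\lambda,\mu,\partial$. The matching parameters then follow from the standard identity $a_\mu(b_{-\lambda-\partial}c)=(a_\mu b)_{-\lambda-\partial}c$, read with $\partial$ acting on the whole. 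I would then check that each associativity application is used with parameters $(\mu,-\lambda-\mu-\partial)$ summing to $-\lambda-\partial$. Signs are tracked with the Koszul rule, using that all structure maps are even. Finally, note that the statement also lists $\partial$-compatibility, which is inherited since $\varphi,\psi$ commute with $\partial$.
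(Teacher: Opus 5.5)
Your proposal is correct and follows the paper's route: you use Proposition~\ref{alcsa} for the BiHom-Lie part, expand both sides of the Leibniz identity into the same six terms the paper writes down, and cancel them by BiHom-associativity. Your explicit pairing (first with first, second-left with fourth-right, the two cross terms against each other) is in fact more detailed than the paper's one-line conclusion. The one thing to tidy is the ``standard identity'' you quote, which should read $\varphi(a)\circ_\mu(b\circ_{-\lambda-\mu-\partial}c)=(a\circ_\mu b)\circ_{-\lambda-\partial}\psi(c)$ with $\partial$ acting on the whole; the cross-term cancellation also needs the sesquilinearity step $(u\circ_{-\lambda-\partial}v)\circ_{\lambda+\mu}w=(u\circ_\mu v)\circ_{\lambda+\mu}w$.
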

\begin{proof}
 As $(A, \circ_\lambda, \varphi, \psi)$ is a regular BiHom-associative conformal superalgebra and $(L, [-_\lambda-], \varphi, \psi)$ is a regular BiHom-Lie conformal superalgebra \cite{T}, we only have to prove the conformal BiHom-Leibniz identity. For any $x, y, z\in\mathcal{H}(A)$,
 \begin{eqnarray}
  [\varphi\psi(x)_\lambda(y\circ_\mu z)]&=&
  \varphi\psi(x)\circ_\lambda(y\circ_\mu z)-(-1)^{|x|(|y|+|z|)}(\varphi^{-1}\psi(y)\circ_{\mu}\varphi^{-1}\psi(z))\circ_{-\lambda-\partial}\varphi^2(x),\nonumber\\
  {[\psi(x)_\lambda y]}\circ_{\lambda+\mu}\psi(z)&=&(\psi(x)\circ_\lambda y)\circ_{\lambda+\mu}\psi(z)-(-1)^{|x||y|}(\varphi^{-1}\psi(y)\circ_{-\lambda-\partial}\varphi(x))\circ_{\lambda+\mu}\psi(z),\nonumber\\
  \psi(y)\circ_\mu[\varphi(x)_\lambda z]&=&\psi(y)\circ_\mu(\varphi(x)\circ_\lambda z)-(-1)^{|x||z|}\psi(y)\circ_\mu(\varphi^{-1}\psi(z)\circ_{-\lambda-\partial}\varphi^2\psi^{-1}(x)).\nonumber
 \end{eqnarray}
By regular conformal BiHom-associativity the conclusion follows.
\end{proof}

\begin{cor}
 If $(A, \circ_\lambda, \varphi, \psi)$ is a regular BiHom-associative conformal algebra, then
$$(A, \circ_\lambda, [-_\lambda -], \varphi, \psi)$$
is a regular BiHom-Poisson conformal algebra, where
\begin{eqnarray}
 [x_\lambda y]=x\circ_\lambda y-\varphi^{-1}\psi(y)\circ_{-\lambda-\partial}\varphi\psi^{-1}(x),\nonumber
\end{eqnarray}
for all $x, y, z\in A$.
\end{cor}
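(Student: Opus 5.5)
The corollary is the purely even special case of the preceding theorem. I would prove it by viewing the regular BiHom-associative conformal algebra $(A, \circ_\lambda, \varphi, \psi)$ as a superalgebra concentrated in even degree, so $A = A_{\bar 0}$ and $A_{\bar 1} = 0$. Then $|x| = 0$ for every $x \in A$, and all sign factors $(-1)^{|x||y|}$ in the definitions equal $1$. In particular, the bracket of the theorem reduces to
\[
[x_\lambda y]=x\circ_\lambda y-\varphi^{-1}\psi(y)\circ_{-\lambda-\partial}\varphi\psi^{-1}(x),
\]
and this holds for every $x,y \in A$, since every element is homogeneous. Likewise, the BiHom-associativity, skew-symmetry, BiHom-Jacobi and BiHom-Leibniz identities reduce to their ungraded versions.

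Applying the preceding theorem then gives that $(A, \circ_\lambda, [-_\lambda -], \varphi, \psi)$ is a regular BiHom-Poisson conformal superalgebra with trivial odd part, that is, a regular BiHom-Poisson conformal algebra.

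If one wants to check the result directly rather than by specialization, the steps are as follows.

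\begin{enumerate}
\item By Proposition \ref{alcsa} with trivial grading, $[-_\lambda-]$ is a regular BiHom-Lie conformal bracket.
\item Expand the three terms of the Leibniz identity exactly as in the proof of the theorem, now without signs.
\item Use BiHom-associativity to identify $\varphi\psi(x)\circ_\lambda(y\circ_\mu z)$ with $(\psi(x)\circ_\lambda y)\circ_{\lambda+\mu}\psi(z)$.
\item Match the remaining cross terms through the associativity rewriting
\[
(\varphi^{-1}\psi(y)\circ_{-\lambda-\partial}\varphi(x))\circ_{\lambda+\mu}\psi(z)=\psi(y)\circ_\mu(\varphi(x)\circ_\lambda z)\text{-type}.
\]
\end{enumerate}

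The only delicate point is tracking the spectral parameters $-\lambda-\partial$ under sesquilinearity when moving $\partial$ across an outer product. Since the graded theorem already handles this, I would simply invoke it.
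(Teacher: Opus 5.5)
Your proposal is correct and matches the paper. The paper gives no separate proof of this corollary: it is the trivially graded case $A=A_{\bar 0}$ of the preceding theorem on regular BiHom-associative conformal superalgebras, where every sign $(-1)^{|x||y|}$ equals $1$, and that is exactly the specialization you carry out. Your optional direct check is also sound, since sesquilinearity in the left slot turns $(\varphi^{-1}\psi(y)\circ_{-\lambda-\partial}\varphi(x))\circ_{\lambda+\mu}\psi(z)$ into $(\varphi^{-1}\psi(y)\circ_{\mu}\varphi(x))\circ_{\lambda+\mu}\psi(z)$, and BiHom-associativity then applies.
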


\begin{cor}
 If $(A, \circ, \varphi, \psi)$ is a regular BiHom-associative  algebra, then
$$(A, \circ, [-, -], \varphi, \psi)$$
is a regular BiHom-Poisson algebra, where
\begin{eqnarray}
 [x, y]=x\circ y-\varphi^{-1}\psi(y)\circ\varphi\psi^{-1}(x),\nonumber
\end{eqnarray}
for all $x, y, z\in A$.
\end{cor}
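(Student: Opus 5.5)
The last corollary is the non-conformal, ungraded specialization of the preceding theorem and its conformal corollary. I would deduce it by specialization rather than redo the computation.

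Given a regular BiHom-associative algebra $(A,\circ,\varphi,\psi)$, regard $A$ as concentrated in even degree, so every sign $(-1)^{|x||y|}$ equals $1$. Form the current conformal algebra $P=\mathbb{C}[\partial]\otimes A$, extend $\varphi,\psi$ $\mathbb{C}[\partial]$-linearly, and set $x\circ_\lambda y=x\circ y$ for $x,y\in A$, extended by sesquilinearity, as in the example following the definition of BiHom-Poisson conformal superalgebras. Conformal BiHom-associativity on generators reduces to ordinary BiHom-associativity, since the $\lambda$-products of elements of $A$ do not depend on $\lambda$. Sesquilinearity then propagates the identity to all of $P$. Regularity of $\varphi,\psi$ on $P$ is inherited from $A$.

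The preceding corollary now gives a regular BiHom-Poisson conformal algebra structure on $P$, with bracket
$$[x_\lambda y]=x\circ_\lambda y-\varphi^{-1}\psi(y)\circ_{-\lambda-\partial}\varphi\psi^{-1}(x).$$
For $x,y\in A$, the product $\varphi^{-1}\psi(y)\circ\varphi\psi^{-1}(x)$ lies in $A$, so $\partial$ acts trivially through the $\lambda$-coefficient. Hence $[x_\lambda y]=x\circ y-\varphi^{-1}\psi(y)\circ\varphi\psi^{-1}(x)=[x,y]\in A$, which is independent of $\lambda$.

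Restricting the conformal skew-symmetry, BiHom-Jacobi and BiHom-Leibniz identities to $x,y,z\in A$ and setting $\lambda=\mu=0$ then yields exactly the ordinary BiHom-Poisson axioms for $(A,\circ,[-,-],\varphi,\psi)$.

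The main point to check is that substituting $-\lambda-\partial$ on $A$-valued $\lambda$-independent products really gives the plain product. This holds because the product has degree $0$ in $\lambda$.

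Alternatively, and as a safety net, I would verify the Leibniz identity directly. Expand $[\varphi\psi(x),y\circ z]$, $[\psi(x),y]\circ\psi(z)$ and $\psi(y)\circ[\varphi(x),z]$ as in the theorem's proof with all signs and $\lambda$'s removed. Then match the terms pairwise using BiHom-associativity $\varphi(a)\circ(b\circ c)=(a\circ b)\circ\psi(c)$, together with the facts that $\varphi,\psi$ are multiplicative automorphisms. The Lie part follows from the non-conformal case of Proposition \ref{alcsa}.
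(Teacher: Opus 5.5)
Your proposal is correct and follows the paper's route. The paper gets this corollary by specializing the conformal commutator theorem, whose proof is the same three-term expansion matched by BiHom-associativity that you give as your fallback; your current-algebra embedding $\mathbb{C}[\partial]\otimes A$ just makes that specialization rigorous. One small wording fix: $\partial$ does not act trivially on $1\otimes A$; what matters is that the $\lambda$-product of elements of $A$ is constant in $\lambda$, so substituting $-\lambda-\partial$ introduces no $\partial$ at all, as you note yourself.
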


\begin{cor}
 If $(A, \circ, \varphi, \psi)$ is a regular BiHom-associative superalgebra, then
$$(A, \circ, [-, -], \varphi, \psi)$$
is a regular BiHom-Poisson  superalgebra, where
\begin{eqnarray}
 [x, y]=x\circ y-(-1)^{|x||y|}\varphi^{-1}\psi(y)\circ\varphi\psi^{-1}(x),\nonumber
\end{eqnarray}
for all $x, y, z\in\mathcal{H}(A)$.
\end{cor}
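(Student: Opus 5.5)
The plan is to obtain this corollary as the non-conformal shadow of the theorem just proved, but to write the verification directly so as not to depend on a formal "forgetting $\partial$" argument. Take the commutator bracket $[x,y]=x\circ y-(-1)^{|x||y|}\varphi^{-1}\psi(y)\circ\varphi\psi^{-1}(x)$. Regularity ($\varphi,\psi$ are automorphisms commuting with each other) makes $\varphi^{-1}\psi$ and $\varphi\psi^{-1}$ even automorphisms commuting with everything in sight. In particular they are multiplicative, which will be used repeatedly.

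The first step is that $(A,[-,-],\varphi,\psi)$ is a regular BiHom-Lie superalgebra. This is the $\lambda$-free version of Proposition \ref{alcsa} (\cite{T}): drop all $\lambda,\mu,\partial$ subscripts, and every identity used there stays valid. Skew-symmetry $[\psi(x),\varphi(y)]=-(-1)^{|x||y|}[\psi(y),\varphi(x)]$ is immediate, since $[\psi(x),\varphi(y)]=\psi(x)\circ\varphi(y)-(-1)^{|x||y|}\psi(y)\circ\varphi(x)$. The BiHom-Jacobi identity is the cited result.

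The second step, the only new content, is the BiHom-Leibniz identity
$$[\varphi\psi(x),y\circ z]=[\psi(x),y]\circ\psi(z)+(-1)^{|x||y|}\psi(y)\circ[\varphi(x),z].$$
Expand both sides with multiplicativity of $\varphi^{-1}\psi$:
$$\mathrm{LHS}=\varphi\psi(x)\circ(y\circ z)-(-1)^{|x|(|y|+|z|)}(\varphi^{-1}\psi(y)\circ\varphi^{-1}\psi(z))\circ\varphi^2(x),$$
while the right-hand side has four terms:
$$(\psi(x)\circ y)\circ\psi(z),\qquad -(-1)^{|x||y|}(\varphi^{-1}\psi(y)\circ\varphi(x))\circ\psi(z),$$
$$(-1)^{|x||y|}\psi(y)\circ(\varphi(x)\circ z),\qquad -(-1)^{|x||y|+|x||z|}\psi(y)\circ(\varphi^{-1}\psi(z)\circ\varphi^2\psi^{-1}(x)).$$
Now apply BiHom-associativity $\varphi(a)\circ(b\circ c)=(a\circ b)\circ\psi(c)$ three times:
\begin{itemize}
\item with $(a,b,c)=(\psi(x),y,z)$, the first LHS term equals the first RHS term;
\item with $(a,b,c)=(\varphi^{-1}\psi(y),\varphi(x),z)$, the second and third RHS terms cancel;
\item with $(a,b,c)=(\varphi^{-1}\psi(y),\varphi^{-1}\psi(z),\varphi^2\psi^{-1}(x))$, the second LHS term equals the fourth RHS term.
\end{itemize}

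The main obstacle is purely bookkeeping. One must check that the signs match, namely $(-1)^{|x|(|y|+|z|)}$ on both sides of the last pairing, using that all structure maps are even. One must also make sure each application of associativity uses the correct twist, $\varphi$ on the left factor and $\psi$ on the right. Together with the BiHom-associativity of $\circ$, which is assumed, this gives the regular BiHom-Poisson superalgebra.
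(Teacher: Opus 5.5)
Your proof is correct and takes the paper's route. The paper obtains this corollary as the non-conformal case of its theorem, and that proof likewise takes the BiHom-Lie structure from the commutator result and expands $[\varphi\psi(x), y\circ z]$, $[\psi(x),y]\circ\psi(z)$ and $\psi(y)\circ[\varphi(x),z]$, matching terms by BiHom-associativity; your triples $(\psi(x),y,z)$, $(\varphi^{-1}\psi(y),\varphi(x),z)$ and $(\varphi^{-1}\psi(y),\varphi^{-1}\psi(z),\varphi^{2}\psi^{-1}(x))$ are exactly the instances the paper leaves implicit, and the signs check out.
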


\subsection{From Hom-post-Poisson conformal superalgebras}

Here, we consider the Hom-type case.
\begin{defn}\label{hppad}
 A Hom-post-Poisson conformal superalgebra is a $\mathbb{Z}_2$-graded vector space $P$ equipped with four even $\mathbb{C}-$bilinear maps
$[-_\lambda -] : P\otimes P\rightarrow P[\lambda]$, $\circ_\lambda : P\otimes P\rightarrow P[\lambda]$, $\star_\lambda : P\otimes P\rightarrow P[\lambda]$ and $\ast_\lambda : P\otimes P\rightarrow P[\lambda]$,
 two an $\mathbb{C}$-linear maps $\varphi : P\rightarrow P$  such that
\begin{enumerate}
 \item [1)]
$(P, [-_\lambda -], \circ_\lambda, \varphi)$
 is a Hom-post-Lie conformal superalgebra,
\item [2)] $(P, \star_\lambda, \ast_\lambda, \varphi)$
is a commutative  Hom-tridendriform conformal superalgebra, and they are compatible in the sense that
\item [3)]
\begin{eqnarray}
 &&[\varphi(x)_\lambda y\ast_\lambda z]=[x_\lambda y]\ast_\lambda\varphi(z)+(-1)^{|x||y|}\varphi(y)\ast_\lambda[x_\lambda z],\label{p1}\\
 &&\varphi(x)\star_\lambda[y_\lambda z]=(x\circ_\lambda y)\ast_\lambda\varphi(z)+(-1)^{|x||y|}[\varphi(y)_\lambda x\star_\lambda z],\label{p2}\\
&&\varphi(x)\circ_\lambda(y\ast_\mu z)=(x\circ_\lambda y)\ast_{\lambda+\mu}\varphi(z)+(-1)^{|x||y|}\varphi(y)\ast_\lambda(x\circ_\lambda z),\label{p3}\\
&&\varphi(x)\star_\lambda(y\circ_\mu z)\!\!+\!\!(-1)^{|x||y|}\varphi(y)\star_\lambda(x\circ_\mu z)=\Big(x\star_\lambda y\!\!+\!\!(-1)^{|x||y|}y\star_\lambda x+x\ast_\lambda y\Big)\circ_{\lambda+\mu}\varphi(z)
,\label{p4}\\
 &&\quad\varphi(x)\circ_\lambda (y\star_\mu z)-(-1)^{|x||y|}\varphi(y)\star_\lambda(x\circ_\mu z)=\label{pp5}\\ &&\hspace{3cm}=\Big(x\circ_\lambda y-(-1)^{|x||y|}y\circ_\lambda x+[x_\lambda y]\Big)\star_{\lambda+\mu}\varphi(z),\nonumber
\end{eqnarray}
\end{enumerate}
for any $x, y, z\in\mathcal{H}(P)$.
\end{defn}

Whenever the product ''$\cdot$'' and the bracket ''$[-, -]$ are trivial i.e. $x\cdot y=[x, y]=0$ for all $x, y\in P$, we say that $(P, ...)$ is a BiHom-prepoisson algebra.

% \begin{exam}
%  arxiv:2209.12242v1 (page 11).
% \end{exam}

\begin{thm}
 Let $(P, [-_\lambda -], \circ_\lambda, \star_\lambda, \ast_\lambda, \varphi, \psi)$ be a  BiHom-post-Poisson conformal superalgebra. Let us define two new  even bilinear operations
\begin{eqnarray}
x\bullet_\lambda y&=&x\star_\lambda y+(-1)^{|x||y|}y\star_{-\lambda-\partial} x+x\ast_\lambda y, \nonumber\\
\ldb x_\lambda y\rdb&=&x\circ_\lambda y-(-1)^{|x||y|}y\circ_{-\lambda-\partial}x+[x_\lambda y],\nonumber
\end{eqnarray}
for any $x, y\in \mathcal{H}(P)$. Then, $(P, \bullet_\lambda, \ldb-_\lambda-\rdb, \varphi)$ is a commutative Hom-Poisson conformal  superalgebra.
\end{thm}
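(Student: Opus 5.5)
The statement has three parts. First, $(P,\bullet_\lambda,\varphi)$ must be a commutative Hom-associative conformal superalgebra. Second, $(P,\ldb-_\lambda-\rdb,\varphi)$ must be a Hom-Lie conformal superalgebra. Third, the conformal Hom-Leibniz identity must hold between them. I would take the structure in the Hom setting, with $\psi=\varphi$, as the statement's conclusion indicates. Sesquilinearity of both new operations follows at once from sesquilinearity of $\star_\lambda,\ast_\lambda,\circ_\lambda,[-_\lambda-]$, using $\partial\varphi=\varphi\partial$.

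For the associative part I would invoke Proposition \ref{cca1} directly. Condition 2) of Definition \ref{hppad} says that $(P,\star_\lambda,\ast_\lambda,\varphi)$ is a commutative Hom-tridendriform conformal superalgebra, and $\bullet_\lambda$ is exactly the product of that proposition. Commutativity of $\bullet_\lambda$ is immediate: swapping $x,y$ and replacing $\lambda$ by $-\lambda-\partial$ exchanges the two $\star$ terms, and the $\ast$ term is commutative.

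For the Lie part I would use the Hom specialisation ($\psi=\varphi$) of Theorem \ref{cla}. Condition 1) makes $(P,[-_\lambda-],\circ_\lambda,\varphi)$ a Hom-post-Lie conformal superalgebra, and with $\psi=\varphi$ the twisting factors $\varphi^{-1}\psi$ and $\varphi\psi^{-1}$ in Theorem \ref{cla} become the identity. So $\ldb-_\lambda-\rdb$ is precisely the bracket of that theorem. Note that no regularity is actually needed here, since the inverses cancel. I would point out that the Jacobi computation in the proof of Theorem \ref{cla} never uses invertibility once $\psi=\varphi$. Skew-symmetry is checked directly: the $\circ$ part is antisymmetric by construction, and $[-_\lambda-]$ is skew.

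The main work is the Hom-Leibniz identity
$$\ldb\varphi(x)_\lambda (y\bullet_\mu z)\rdb=\ldb x_\lambda y\rdb\bullet_{\lambda+\mu}\varphi(z)+(-1)^{|x||y|}\varphi(y)\bullet_\mu\ldb x_\lambda z\rdb .$$
Expanding both sides gives three $\bullet$-summands times three bracket-summands, so about nine terms on the left and eighteen on the right. I would sort them by type:
\begin{itemize}
\item terms of shape $[\cdot_\lambda(\cdot\ast\cdot)]$ against $[\cdot]\ast\cdot$ and $\cdot\ast[\cdot]$ cancel by (\ref{p1});
\item terms $\varphi(x)\circ_\lambda(y\ast_\mu z)$ cancel by (\ref{p3});
\item terms $\varphi(x)\circ_\lambda(y\star_\mu z)$ are handled by (\ref{pp5});
\item terms of the form $(y\star z)\circ\varphi(x)$, coming from the reversed $\circ$-summand of $\ldb-\rdb$, are handled by (\ref{p4}) after using commutativity to rewrite them;
\item the remaining mixed terms $\varphi(x)\star[\cdot]$ and $[\cdot]\star\varphi(\cdot)$ are handled by (\ref{p2}).
\end{itemize}

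The obstacle is bookkeeping: matching the $\lambda$-parameters (replacing $\lambda$ by $-\lambda-\mu-\partial$ through sesquilinearity) and the signs when reversed products such as $(y\bullet_\mu z)\circ_{-\lambda-\partial}\varphi(x)$ appear. I would first rewrite every reversed term using skew-symmetry or commutativity so that $\varphi(x)$ stands on the left, and only then apply (\ref{p1})--(\ref{pp5}) group by group. A residual combination will likely remain that is a post-Lie associator expression. I would kill it using axiom (\ref{pl3}) of Definition \ref{hplad}, specialised to $\psi=\varphi$, together with (\ref{d1}).
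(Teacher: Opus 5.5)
Your plan is the paper's proof. Proposition \ref{cca1} gives the commutative Hom-associative part, Theorem \ref{cla} with $\psi=\varphi$ gives the Hom-Lie part, and the Hom-Leibniz identity is expanded and cancelled group by group via (\ref{pp5}), (\ref{p3}), (\ref{p4}), (\ref{p2}) with commutativity of $\ast_\lambda$, and (\ref{p1}). Drop only your anticipated residual: these five compatibilities close the computation exactly, and (\ref{pl3}) and (\ref{d1}) could not contribute anyway, because every term of the Leibniz identity contains exactly one of $\circ_\lambda, [-_\lambda-]$ and one of $\star_\lambda, \ast_\lambda$, whereas those axioms relate terms built from two operations of the same kind.
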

{\it Proof :}
The commutative conformal associativity comes from Proposition \ref{cca1}  and the Hom-Lie conformal structure follows from  Lemma \ref{cla}.
Let us prove the Hom-Leibniz conformal rule. For any $x, y, z\in\mathcal{H}({P})$,
\begin{eqnarray}
 &&\quad\ldb\varphi(x)_\lambda (y\bullet_{\mu} z)\rdb-\ldb x_\lambda y\rdb\bullet_{\lambda+\mu}\varphi(z)-(-1)^{|x||y|}\varphi(y)\bullet_\mu\ldb x_\lambda z\rdb=\nonumber\\
&&=\varphi(x)\circ_\lambda( y\star_\mu z+(-1)^{|y||z|}z\star_{-\mu-\partial} y+y\ast_\mu z)\nonumber\\
&&\quad-(-1)^{|x|(|y|+|z|)}( y\star_\mu z+(-1)^{|y||z|}z\star_{-\mu-\partial} y+y\ast_\mu z)\circ_{-\lambda-\partial}\varphi(x)\nonumber\\
&&\quad+[\varphi(x)_\lambda(y\star _\mu z+(-1)^{|y||z|}z\star_{-\mu-\partial} y+y\ast_\mu z)]\nonumber\\
&&-(x\circ_\lambda y-(-1)^{|x||y|}y\circ_{-\lambda-\partial} x+[x_\lambda y])\star_{\lambda+\mu}\varphi(z)\nonumber\\
&&-(-1)^{(|x|+|y|)|z|}\varphi(z)\star_{-\lambda-\mu-\partial}(x\circ_\lambda y-(-1)^{|x||y|}y\circ_{-\lambda-\partial} x+[x_\lambda y])\nonumber\\
&&-(x\circ_\lambda y-(-1)^{|x||y|}y\circ_{-\lambda-\partial} x+[x_\lambda y])\ast_{\lambda+\mu}\varphi(z)\nonumber\\
&&\quad-(-1)^{|x||y|}\Big(\varphi(y)\star_\mu(x\circ_\lambda z-(-1)^{|x||z|}z\circ_{-\lambda-\partial} x+[x_\lambda z])\nonumber\\
&&\quad+(-1)^{|y|(|x|+|z|)}(x\circ_\lambda z-(-1)^{|x||z|}z\circ_{-\lambda-\partial} x+[x_\lambda z])\star_{-\mu-\partial}\varphi(y)\nonumber\\
&&\quad+\varphi(y)\ast_\mu(x\circ_\lambda z-(-1)^{|x||z|}z\circ_{-\lambda-\partial} x+[x_\lambda z])\Big)\nonumber
\end{eqnarray}
By expending and grouping the terms, we get :
\begin{eqnarray}
 &&\quad\ldb\varphi(x)_\lambda (y\bullet_{\mu} z)\rdb-\ldb x_\lambda y\rdb\bullet_{\lambda+\mu}\varphi(z)-(-1)^{|x||y|}\varphi(y)\bullet_\mu\ldb x_\lambda z\rdb=\nonumber\\
&&=\Big(\varphi(x)\circ_\lambda(y\star_\mu z)-(x\circ_\lambda y)\star_{\lambda+\mu}\varphi(z)+(-1)^{|x||y|}(y\circ_{-\lambda-\partial} x)\star_{\lambda+\mu}\varphi(z)
-[x_\lambda y]\star_{\lambda+\mu}\varphi(z)\nonumber\\
&&-(-1)^{|x||y|}\varphi(y)\star_\mu(x\circ_\lambda z)\Big)+\Big((-1)^{|y||z|}\varphi(x)\circ_\lambda(z\star_{-\mu-\partial} y)-(-1)^{(|x|+|y|)|z|}\varphi(z)\star_{-\lambda-\mu-\partial}(x\circ_\lambda y)\nonumber\\
&&-(-1)^{|y||z|}(x\circ_\lambda z)\star_{-\mu-\partial}\varphi(y)
+(-1)^{(|x|+|y|)|z|}(z\circ_{-\lambda-\partial} x)\star_{-\lambda-\partial}\varphi(y)-(-1)^{|y||z|}[x_\lambda z]\star_{-\mu-\partial}\varphi(y)\Big)\nonumber\\
&&+\Big(\varphi(x)\circ_\lambda(y\ast_\mu z)-(x\circ_\lambda y)\ast_{\lambda+\mu}\varphi(z)-(-1)^{|x||y|}\varphi(y)\ast_\mu(x\circ_\lambda z)\Big)\nonumber\\
&&+\Big(-(-1)^{|x|(|y|+|z|)}(y\star_\mu z)\circ_{-\lambda-\partial}\varphi(x)-(-1)^{|x|(|y|+|z|)+|y||z|}(z\star_{-\mu-\partial} y)\circ_{-\lambda-\partial}\varphi(x)\nonumber\\
&&-(-1)^{|x|(|y|+|z|)}(y\ast_\mu z)\circ_{-\lambda-\partial}\varphi(x)\nonumber\\
&&+(-1)^{(|x|+|y|)|z|+|x||y|}\varphi(z)\star_{-\lambda-\mu-\partial}(y\circ_{-\lambda-\partial} x)+(-1)^{|x|(|y|+|z|)}\varphi(y)\star_\mu(z\circ_{-\lambda-\partial} x)\Big)
\nonumber\\
&&+\Big([\varphi(x)_\lambda (y\star_\mu z)]+(-1)^{|x||y|}(y\circ_{-\lambda-\partial} x)\ast_{\lambda+\mu}\varphi(z)-(-1)^{|x||y|}\varphi(y)\star_\mu[x_\lambda z]\Big)\nonumber\\
&&+\Big((-1)^{|y||z|}[\varphi(x)_\lambda (z\star_{-\mu-\partial} y)]-(-1)^{(|x|+|y|)|z|}\varphi(z)\star_{-\lambda-\mu-\partial}[x_\lambda y]\nonumber\\
&&+(-1)^{|x|(|y|+|z|)}\varphi(y)\ast_\mu(z\circ_{-\lambda-\mu} x)\Big)+\Big([\varphi(x)_\lambda (y\ast_\mu z)]-[x_\lambda y]\ast_{\lambda+\mu}\varphi(z)
\nonumber\\
&&-(-1)^{|x||y|}\varphi(y)_\mu\ast[x_\lambda z]\Big).\nonumber
\end{eqnarray}
The first two brackets cancel by (\ref{pp5}), the third by (\ref{p3}), the next by
 (\ref{p4}), the fith and sixth by (\ref{p2}) and commutativity, the last by (\ref{p1}).

We can thus conclude that $(P, \bullet_\lambda, \ldb-_\lambda-\rdb, \varphi, \psi)$ is a commutative Hom-Poisson conformal  superalgebra.
\begin{cor}
If $(P, \star_\lambda, \circ_\lambda, \varphi, \psi)$ be a Hom-pre-Poisson conformal superalgebra, then\\ $(P, \ast_\lambda, \ldb-_\lambda-\rdb, \varphi, \psi)$ is a commutative
Hom-Poisson conformal  superalgebra.
\end{cor}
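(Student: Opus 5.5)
The corollary is the specialization of the preceding theorem to the case where the two "post" operations vanish. A Hom-pre-Poisson conformal superalgebra $(P,\star_\lambda,\circ_\lambda,\varphi)$ is a Hom-post-Poisson conformal superalgebra in the sense of Definition \ref{hppad} with $[x_\lambda y]=0$ and $x\ast_\lambda y=0$ for all $x,y\in\mathcal{H}(P)$. The plan is therefore to check that these trivial operations are admissible and then to read off the conclusion from the theorem.

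\textbf{Step 1: identify the axioms.} With $[-_\lambda-]=0$, the Hom-post-Lie axioms (\ref{pl4}) and (\ref{pl3}) reduce to the Hom-pre-Lie (left-symmetric) conformal identity for $\circ_\lambda$. The skew-symmetry and Jacobi identity for the zero bracket hold trivially.

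With $\ast_\lambda=0$, the pair $(P,\star_\lambda,0,\varphi)$ is a commutative Hom-tridendriform conformal superalgebra, because:
\begin{itemize}
\item the zero product is commutative and Hom-associative;
\item (\ref{d2}) reads $0=0$;
\item (\ref{d1}) becomes the Zinbiel-type identity $\varphi(x)\star_\lambda(y\star_\mu z)=(x\star_\lambda y+(-1)^{|x||y|}y\star_{-\lambda-\partial}x)\star_{\lambda+\mu}\varphi(z)$, which is the dendriform part of the pre-Poisson axioms.
\end{itemize}

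In the compatibility conditions, (\ref{p1}) and (\ref{p3}) become vacuous. Conditions (\ref{p2}), (\ref{p4}) and (\ref{pp5}) reduce to the two mixed pre-Poisson compatibilities between $\star_\lambda$ and $\circ_\lambda$. So a Hom-pre-Poisson conformal superalgebra is exactly a Hom-post-Poisson one with $[-_\lambda-]=\ast_\lambda=0$.

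\textbf{Step 2: apply the theorem.} The theorem now gives a commutative Hom-Poisson conformal superalgebra with
$$x\bullet_\lambda y=x\star_\lambda y+(-1)^{|x||y|}y\star_{-\lambda-\partial}x,\qquad \ldb x_\lambda y\rdb=x\circ_\lambda y-(-1)^{|x||y|}y\circ_{-\lambda-\partial}x.$$
The product called $\ast_\lambda$ in the corollary is to be read as this $\bullet_\lambda$. The associativity part comes via Proposition \ref{cca1} and the Lie part via Theorem \ref{cla} with zero bracket.

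\textbf{Main obstacle.} The only delicate point is bookkeeping rather than mathematics. The paper's definition of Hom-pre-Poisson is only sketched, and the statement mixes $\varphi$ with $\psi$ and $\ast$ with $\bullet$. I would fix the interpretation: take $\psi=\varphi$, or a Hom setting with a single map, and set the derived product $\ast_\lambda:=\bullet_\lambda$.

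I would also verify that the specialized (\ref{p2}) loses its $(x\circ_\lambda y)\ast_\lambda\varphi(z)$ term consistently, since that term vanishes when $\ast_\lambda=0$. Apart from this, every identity needed is inherited directly from the theorem.
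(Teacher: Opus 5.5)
Your proposal is correct and is essentially the paper's own argument. The paper gives no separate proof: the corollary is the preceding theorem specialized to the case where $[-_\lambda-]$ and $\ast_\lambda$ are trivial. The conclusion's product is then $x\star_\lambda y+(-1)^{|x||y|}y\star_{-\lambda-\partial}x$ and $\ldb-_\lambda-\rdb$ is the graded commutator of $\circ_\lambda$, exactly as you read it. One small slip: (\ref{p2}) becomes vacuous too, since both sides vanish, so only (\ref{p4}) and (\ref{pp5}) survive as the mixed compatibilities; this does not affect the argument.
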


\subsection{From  BiHom-Poisson conformal superalgebras and twisting}
The following theorem asserts that  BiHom-Poisson color algebra turn to another one via morphisms.
\begin{thm}\label{tw1}
 Let $(P, \circ_\lambda, [-_\lambda-], \varphi, \psi )$ be a  BiHom-Poisson conformal superalgebra and $\varphi', \psi' : P\rightarrow P$ two even morphisms of
BiHom-Poisson conformal superalgebras such that the maps $\varphi, \varphi', \psi, \psi'$ commute pairewise. Then,
$$P_{(\varphi', \psi')}=(P, \bullet_\lambda:=\circ_\lambda(\varphi\otimes\psi), \ldb-_\lambda-\rdb:=[-_\lambda-](\varphi\otimes\psi), \varphi\varphi', \psi\psi')$$
is a  BiHom-Poisson conformal superalgebra.

Moreover, if $\tilde P=(\bar P, \tilde\circ_\lambda, \tilde{[-_\lambda-]}, \tilde\varphi, \tilde\psi)$ and $P_{(\tilde\varphi', \tilde\psi')}$ is another  BiHom-Poisson conformal superalgebra and $f : P\rightarrow \tilde P$ is a morphism such that $f$ commutes with $\tilde\varphi'$ and $\tilde\psi'$, Then, $f$ is also a morphism from $P_{(\varphi', \psi')}$ to
$P_{(\tilde\varphi', \tilde\psi')}$.
\end{thm}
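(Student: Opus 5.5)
Throughout, I read the twisted operations as $\bullet_\lambda=\circ_\lambda(\varphi'\otimes\psi')$ and $\ldb-_\lambda-\rdb=[-_\lambda-](\varphi'\otimes\psi')$. This is consistent with Theorem \ref{tsa} and with the structure maps $\varphi\varphi',\psi\psi'$; with $\varphi\otimes\psi$ the twisted Leibniz terms would not match. I also use that $\varphi',\psi'$ commute with $\partial$, which holds because they are $\mathbb{C}[\partial]$-linear morphisms.

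The plan is to verify the three axioms of a BiHom-Poisson conformal superalgebra separately, reusing earlier results wherever possible. Sesquilinearity of $\bullet_\lambda$ and $\ldb-_\lambda-\rdb$ follows at once from that of $\circ_\lambda$ and $[-_\lambda-]$, since $\varphi'$ and $\psi'$ commute with $\partial$. The conformal BiHom-associativity of $(P,\bullet_\lambda,\varphi\varphi',\psi\psi')$ is exactly the last assertion of Theorem \ref{tsa}, taking $R=0$. That $(P,\ldb-_\lambda-\rdb,\varphi\varphi',\psi\psi')$ is a BiHom-Lie conformal superalgebra is Proposition 2.2 of \cite{T}. Both apply because $\varphi',\psi'$ are even morphisms for each product and the four maps commute pairwise.

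The only genuinely new step is the conformal BiHom-Leibniz identity. I will expand the left-hand side and rewrite it so that the original Leibniz identity applies, moving $\varphi',\psi'$ inside the products by the morphism property and commuting them past $\varphi,\psi$:
\begin{eqnarray}
\ldb \varphi\varphi'\psi\psi'(x)_\lambda(y\bullet_\mu z)\rdb
&=&[\varphi'^2\varphi\psi\psi'(x)_\lambda\psi'(\varphi'(y)\circ_\mu\psi'(z))]\nonumber\\
&=&[\varphi\psi(\tilde x)_\lambda(\tilde y\circ_\mu\tilde z)],\nonumber
\end{eqnarray}
where $\tilde x=\varphi'^2\psi'(x)$, $\tilde y=\varphi'\psi'(y)$ and $\tilde z=\psi'^2(z)$.

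Applying the Leibniz identity of $P$ to $\tilde x,\tilde y,\tilde z$ gives two terms: $[\psi(\tilde x)_\lambda\tilde y]\circ_{\lambda+\mu}\psi(\tilde z)$ and $(-1)^{|x||y|}\psi(\tilde y)\circ_\mu[\varphi(\tilde x)_\lambda\tilde z]$. The parities are unchanged because all maps are even.

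Expanding the right-hand side in the same way:
\begin{eqnarray}
\ldb\psi\psi'(x)_\lambda y\rdb\bullet_{\lambda+\mu}\psi\psi'(z)
&=&[\psi\varphi'^2\psi'(x)_\lambda\varphi'\psi'(y)]\circ_{\lambda+\mu}\psi\psi'^2(z),\nonumber\\
\psi\psi'(y)\bullet_\mu\ldb\varphi\varphi'(x)_\lambda z\rdb
&=&\psi\varphi'\psi'(y)\circ_\mu[\varphi\varphi'^2\psi'(x)_\lambda\psi'^2(z)].\nonumber
\end{eqnarray}
These are exactly the two terms obtained above, which proves the identity. The main obstacle is only bookkeeping: one must place the powers of $\varphi'$ and $\psi'$ correctly in each slot, which the substitution $\tilde x,\tilde y,\tilde z$ handles uniformly.

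For the morphism statement, I assume $f$ intertwines the primed maps, i.e. $f\varphi'=\tilde\varphi' f$ and $f\psi'=\tilde\psi' f$ (this is the intended meaning of ``commutes with''). Then $f(x\bullet_\lambda y)=f(\varphi'(x))\,\tilde\circ_\lambda\, f(\psi'(y))=\tilde\varphi'f(x)\,\tilde\circ_\lambda\,\tilde\psi'f(y)$. This equals the twisted product of $f(x)$ and $f(y)$ in $P_{(\tilde\varphi',\tilde\psi')}$, and the bracket is handled identically. Compatibility with the new structure maps is immediate: $f\varphi\varphi'=\tilde\varphi\tilde\varphi' f$, and similarly for $\psi\psi'$.
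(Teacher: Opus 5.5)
Your proof is correct and follows the route the paper indicates: the paper's proof only says to repeat the computation of Theorem \ref{tsa}, and you cite Theorem \ref{tsa} (with $R=0$) and Proposition 2.2 of \cite{T} for the associative and Lie parts, then carry out that same substitution argument for the Leibniz identity with $\tilde x=\varphi'^2\psi'(x)$, $\tilde y=\varphi'\psi'(y)$, $\tilde z=\psi'^2(z)$. Your reading of the twisted operations as $\circ_\lambda(\varphi'\otimes\psi')$ and $[-_\lambda-](\varphi'\otimes\psi')$, with $\varphi',\psi'$ commuting with $\partial$, is the intended one; with $\varphi\otimes\psi$ as literally printed, the statement would not hold in general.
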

\begin{proof}
 It has the same technics as in Theorem \ref{tsa}.
\end{proof}

\begin{cor}\label{c1tw1}
 Let $(P, \circ_\lambda, [-_\lambda-], \varphi, \psi )$ be a  BiHom-Poisson conformal superalgebra. Then,
 $$P_{(\varphi^{n}, \psi^{n})}=(P, \bullet_\lambda:=\circ_\lambda(\varphi\otimes\psi), \ldb-_\lambda-\rdb:=[-_\lambda-](\varphi\otimes\psi), \varphi^{n+1}, \psi^{n+1})$$
is also a  BiHom-Poisson conformal superalgebra.
\end{cor}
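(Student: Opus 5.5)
The plan is to obtain Corollary \ref{c1tw1} as a direct specialization of Theorem \ref{tw1}, taking $\varphi'=\varphi^{n}$ and $\psi'=\psi^{n}$. The hypotheses of Theorem \ref{tw1} then reduce to three checks. First, $\varphi^n$ and $\psi^n$ must be even maps commuting with $\partial$. Second, the four maps $\varphi,\psi,\varphi^n,\psi^n$ must commute pairwise. Third, $\varphi^n$ and $\psi^n$ must be morphisms of the BiHom-Poisson conformal superalgebra $P$.

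The first two checks are immediate. Since $\varphi\psi=\psi\varphi$, all powers of $\varphi$ and $\psi$ commute with each other. Evenness and the relations $\partial\varphi=\varphi\partial$, $\partial\psi=\psi\partial$ pass to powers.

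For the third check I will assume, as is implicit in the statement, that $P$ is multiplicative, i.e. $\varphi$ and $\psi$ are endomorphisms of both $\circ_\lambda$ and $[-_\lambda-]$. This is the main point to flag: without multiplicativity, $\varphi^n$ and $\psi^n$ need not preserve the products, and Theorem \ref{tw1} would not apply. With it, a straightforward induction on $n$ shows that
\[
\varphi^{n}(x\circ_\lambda y)=\varphi^{n}(x)\circ_\lambda\varphi^{n}(y),\qquad \varphi^{n}([x_\lambda y])=[\varphi^{n}(x)_\lambda\varphi^{n}(y)],
\]
and similarly for $\psi^n$. Each power also commutes with $\varphi$ and $\psi$, so it is a morphism in the sense of the paper's definition.

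Theorem \ref{tw1} then gives that $P$, with products twisted by $\varphi^n\otimes\psi^n$ and structure maps $\varphi\varphi^n=\varphi^{n+1}$ and $\psi\psi^n=\psi^{n+1}$, is a BiHom-Poisson conformal superalgebra. The twisting in the corollary's display is written as $(\varphi\otimes\psi)$; this matches the displayed formula of Theorem \ref{tw1} itself, where the twist is written with $\varphi\otimes\psi$ while the intended twist is $\varphi'\otimes\psi'$. I will read the corollary consistently with the theorem, i.e. with $\bullet_\lambda=\circ_\lambda(\varphi^n\otimes\psi^n)$ and $\ldb-_\lambda-\rdb=[-_\lambda-](\varphi^n\otimes\psi^n)$.

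If one instead wants the literal reading, with twist $\varphi\otimes\psi$ and structure maps $\varphi^{n+1},\psi^{n+1}$, that version is not a direct instance of Theorem \ref{tw1}. It would need a separate argument along the lines of Theorem \ref{tsa} using multiplicativity, and I expect it can fail for generic $n$, so I will not pursue it.
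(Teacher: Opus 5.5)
Your proposal is correct and follows the paper's proof, which consists solely of taking $\varphi'=\varphi^{n}$ and $\psi'=\psi^{n}$ in Theorem \ref{tw1}. Your caveats are accurate and worth keeping: that one-line argument tacitly requires $\varphi,\psi$ to be multiplicative so that their powers are morphisms, and the twist must be read as $\varphi^{n}\otimes\psi^{n}$ (as in Theorem \ref{tsa}), which coincides with the displayed $\varphi\otimes\psi$ only for $n=1$.
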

\begin{proof}
 It suffises to take $\varphi'=\varphi^n$ and $\psi'=\psi^n$ in Theorem \ref{tw1}.
\end{proof}

By taking $\varphi=\varphi'\neq id\neq\psi=\psi'$ in Theorem \ref{tw1} or taking $n=1$ in Corollary \ref{c1tw1} we get the following corollary.

 \begin{cor}
 Let $(P, \circ_\lambda, [-_\lambda-], \varphi, \psi )$ be a BiHom-Poisson conformal superalgebra. Then,
 $$P_{(\varphi, \psi)}=(P, \bullet_\lambda:=\circ_\lambda(\varphi\otimes\psi), \ldb-_\lambda-\rdb:=[-_\lambda-](\varphi\otimes\psi), \varphi^{2}, \psi^{2})$$
is also a BiHom-Poisson conformal superalgebra.
\end{cor}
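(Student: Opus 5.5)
This corollary is the special case $\varphi'=\varphi$, $\psi'=\psi$ of Theorem \ref{tw1}, or equivalently $n=1$ in Corollary \ref{c1tw1}. So the plan is to verify that the hypotheses of Theorem \ref{tw1} hold for this choice, and then read off the conclusion.

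\textbf{Hypotheses.} Theorem \ref{tw1} requires $\varphi',\psi'$ to be even morphisms of BiHom-Poisson conformal superalgebras, with $\varphi,\varphi',\psi,\psi'$ commuting pairwise.
\begin{enumerate}
\item Pairwise commutation reduces to $\varphi\psi=\psi\varphi$, which is part of the definition of a BiHom conformal superalgebra.
\item Evenness and compatibility with $\partial$ are also built into the definition.
\item The one real requirement is that $\varphi$ and $\psi$ are morphisms. This means $\varphi(x\circ_\lambda y)=\varphi(x)\circ_\lambda\varphi(y)$ and $\varphi([x_\lambda y])=[\varphi(x)_\lambda\varphi(y)]$, and likewise for $\psi$. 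That is exactly multiplicativity of $P$, which I would assume, as is implicit in Theorem \ref{tw1} and Corollary \ref{c1tw1}.
\end{enumerate}

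\textbf{Direct check.} If a self-contained check is preferred, I would mirror the computation in Theorem \ref{tsa}.
\begin{enumerate}
\item \emph{BiHom-associativity.} Expand $\varphi^2(x)\bullet_\lambda(y\bullet_\mu z)$ as $\varphi^3(x)\circ_\lambda(\varphi\psi(y)\circ_\mu\psi^2(z))$. Apply BiHom-associativity to $\varphi^2(x),\varphi\psi(y),\psi^2(z)$, then use multiplicativity to regroup the result as $(x\bullet_\lambda y)\bullet_{\lambda+\mu}\psi^2(z)$.
\item \emph{Skew-symmetry.} Apply the skew-symmetry of $[-_\lambda-]$ to the arguments $\varphi^2(x)$ and $\varphi\psi(y)$, noting that $\varphi$ and $\psi$ commute with $\partial$, so the shift $\lambda\mapsto-\lambda-\partial$ passes through the twists.
\item \emph{BiHom-Jacobi identity.} Each term of the new identity equals $\varphi\psi$ applied to, or is a twisted instance of, the corresponding term of the original identity, after pushing the maps inside with multiplicativity.
\item \emph{BiHom-Leibniz identity.} $\ldb\varphi^2\psi^2(x)_\lambda(y\bullet_\mu z)\rdb$ becomes the original Leibniz identity evaluated at $\varphi^2(x),\varphi\psi(y),\psi^2(z)$, with an extra $\varphi\psi$ applied, hence splits into the two required terms.
\end{enumerate}

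\textbf{Main obstacle.} The only delicate point is bookkeeping in the Jacobi and Leibniz identities. One must check that the powers of $\varphi$ and $\psi$ line up correctly on each argument, and that the Koszul signs are unchanged; they are, because all the maps involved are even.
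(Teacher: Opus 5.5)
Your proposal is correct and matches the paper, which obtains this corollary as the special case $\varphi'=\varphi$, $\psi'=\psi$ of Theorem \ref{tw1} (equivalently $n=1$ in Corollary \ref{c1tw1}); you are right that this tacitly requires $\varphi,\psi$ to be morphisms, i.e.\ $P$ multiplicative. In your optional direct check some substitutions are slightly off. Skew-symmetry should be applied to $\varphi\psi(x),\varphi\psi(y)$, and the new Leibniz identity is exactly the original one at $\varphi^2\psi(x),\varphi\psi(y),\psi^2(z)$ with no extra $\varphi\psi$; this does not affect the main argument.
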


By taking $\varphi=\psi\neq id\neq\varphi'=\psi'$ in Theorem \ref{tw1}, we get the following corollary.

\begin{cor}
 Let $(P, \circ_\lambda, [-_\lambda-], \varphi, \psi )$ be a BiHom-Poisson conformal superalgebra. Then,
 $$P_{(\varphi, \psi)}=(P, \bullet_\lambda:=\circ_\lambda(\varphi\otimes\varphi), \ldb-_\lambda-\rdb:=[-_\lambda-](\varphi\otimes\varphi), \varphi\psi)$$
is a Hom-Poisson conformal superalgebra.
\end{cor}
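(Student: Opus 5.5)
The plan is to read the corollary as the specialization of Theorem \ref{tw1} in which the two new structure maps coincide. It is not a fresh computation. Concretely, I would apply Theorem \ref{tw1} with the twisting pair $\varphi':=\psi$ and $\psi':=\varphi$. Under this choice the twisted structure maps of $P_{(\varphi',\psi')}$ are $\varphi\varphi'=\varphi\psi$ and $\psi\psi'=\psi\varphi$. Since $\varphi$ and $\psi$ commute by definition of a BiHom conformal superalgebra, these are the same map $\varphi\psi$. A BiHom-Poisson conformal superalgebra whose two structure maps are equal is, by the remark after the definition of BiHom-Lie conformal superalgebras, exactly a Hom-Poisson conformal superalgebra. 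So the conclusion follows once the hypotheses of Theorem \ref{tw1} are checked.

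\textbf{Checking the hypotheses.} Theorem \ref{tw1} needs $\varphi'=\psi$ and $\psi'=\varphi$ to be even morphisms of the BiHom-Poisson conformal superalgebra $P$, and needs $\varphi,\varphi',\psi,\psi'$ to commute pairwise.
\begin{enumerate}
\item Commutation is automatic, since only $\varphi$ and $\psi$ occur and they commute.
\item Evenness and commutation with $\partial$ are part of the definition of the structure maps.
\item The morphism property, namely $\varphi(x\circ_\lambda y)=\varphi(x)\circ_\lambda\varphi(y)$, $\varphi[x_\lambda y]=[\varphi(x)_\lambda\varphi(y)]$ and likewise for $\psi$, is the statement that $P$ is multiplicative.
\end{enumerate}
I would therefore state that the corollary is understood for multiplicative $P$, which is the standing convention under which Theorem \ref{tw1} produces it. With these hypotheses, Theorem \ref{tw1} gives the BiHom-associativity, the BiHom-Lie axioms and the conformal BiHom-Leibniz rule for the twisted operations with structure maps $(\varphi\psi,\varphi\psi)$. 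These become exactly the Hom-associativity, Hom-skew-symmetry, Hom-Jacobi and Hom-Leibniz identities with twisting map $\varphi\psi$.

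\textbf{Main obstacle: which twisted product is meant.} The twisted operations must be read with the convention of Theorem \ref{tw1}, where the product is $\circ_\lambda(\varphi'\otimes\psi')$. The literal choice $\circ_\lambda(\varphi\otimes\varphi)$ fails for general $\psi$, as the following test of Hom-associativity shows.
\begin{enumerate}
\item Write $\bullet_\lambda:=\circ_\lambda(\varphi\otimes\varphi)$. Then $\varphi\psi(x)\bullet_\lambda(y\bullet_\mu z)=\varphi^2\psi(x)\circ_\lambda(\varphi^2(y)\circ_\mu\varphi^2(z))$.
\item Applying BiHom-associativity with first argument $\varphi\psi(x)$ turns this into $(\varphi\psi(x)\circ_\lambda\varphi^2(y))\circ_{\lambda+\mu}\psi\varphi^2(z)$.
\item The other side is $(x\bullet_\lambda y)\bullet_{\lambda+\mu}\varphi\psi(z)=(\varphi^2(x)\circ_\lambda\varphi^2(y))\circ_{\lambda+\mu}\varphi^2\psi(z)$.
\item These agree only if $\varphi\psi(x)$ can be replaced by $\varphi^2(x)$ in the first slot, which holds in general only when $\varphi=\psi$.
\end{enumerate}
So the key point is to take $\varphi'=\psi$ and $\psi'=\varphi$, giving the product $\circ_\lambda(\psi\otimes\varphi)$ and bracket $[-_\lambda-](\psi\otimes\varphi)$. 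For these the same computation closes:
$$\varphi\psi(x)\bullet_\lambda(y\bullet_\mu z)=\varphi\psi^2(x)\circ_\lambda(\varphi\psi(y)\circ_\mu\varphi^2(z))=(\psi^2(x)\circ_\lambda\varphi\psi(y))\circ_{\lambda+\mu}\psi\varphi^2(z),$$
and the right-hand side equals $(x\bullet_\lambda y)\bullet_{\lambda+\mu}\varphi\psi(z)$. When $\varphi=\psi$ this product is literally $\circ_\lambda(\varphi\otimes\varphi)$, which recovers the displayed formula in the statement.

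I would then remark that the Hom-Leibniz and Hom-Jacobi identities are obtained by the identical substitution in the proof of Theorem \ref{tw1}. Each application of $\varphi'$ or $\psi'$ is pushed through $\circ_\lambda$ or $[-_\lambda-]$ using multiplicativity, and the original BiHom identity is then invoked.
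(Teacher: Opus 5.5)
Your argument is correct, but it specializes Theorem~\ref{tw1} differently from the paper. The difference matters, because the statement as printed is only right when $\varphi=\psi$.

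The paper's one-line justification takes $\varphi=\psi$ and $\varphi'=\psi'$ in Theorem~\ref{tw1}. For the displayed product $\circ_\lambda(\varphi\otimes\varphi)$ and structure map $\varphi\psi$ to come out, one needs in effect $\varphi'=\psi'=\varphi=\psi$. So the paper really only covers Hom-type input, where $\varphi\psi=\varphi^2$.

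You allow $\varphi\neq\psi$ and take $\varphi'=\psi$, $\psi'=\varphi$, so the two new structure maps agree by commutativity alone. This gives a result for every multiplicative BiHom-Poisson conformal superalgebra, with product $\circ_\lambda(\psi\otimes\varphi)$ and bracket $[-_\lambda-](\psi\otimes\varphi)$. It coincides with the paper's construction exactly when $\varphi=\psi$.

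Your diagnosis of the printed formula is right, and it can be made concrete. Take a unital Poisson algebra $A$, Yau-twisted by $\varphi=\mathrm{id}$ and a Poisson automorphism $\psi\neq\mathrm{id}$, and view it as a conformal algebra via $\mathbb{C}[\partial]\otimes A$. Then $\bullet_\lambda=\circ_\lambda$, and Hom-associativity at $y=z=1$ compares $\psi(x)$ with $x$.

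You also handle two points the paper glosses over. First, multiplicativity must be assumed, since in both routes the twisting maps have to be morphisms. Second, Theorem~\ref{tw1} has to be read with product $\circ_\lambda(\varphi'\otimes\psi')$, as in Theorem~\ref{tsa}. With the printed $\circ_\lambda(\varphi\otimes\psi)$, your choice $\varphi'=\psi$, $\psi'=\varphi$ would not be Hom-associative.

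You verify Hom-associativity directly, and the remaining identities follow by the same substitution. For instance, $\ldb\varphi\psi(x)_\lambda(y\bullet_\mu z)\rdb=[\varphi\psi^2(x)_\lambda(\varphi\psi(y)\circ_\mu\varphi^2(z))]$ is the BiHom-Leibniz identity evaluated at $\psi(x)$, $\varphi\psi(y)$, $\varphi^2(z)$. So your argument is complete.
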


\begin{cor}
If we suppose that the BiHom-Poisson conformal superalgebra is regular and the morphisms are bijective in  Theorem \ref{tw1}, and taking
$\varphi'=\varphi^{-1} \neq id\neq\psi'=\psi^{-1}$ get that
$$P_{(\varphi^{-1}, \psi^{-1})}=(P, \bullet_\lambda:=\circ_\lambda(\varphi\otimes\psi), \ldb-_\lambda-\rdb:=[-_\lambda-](\varphi\otimes\psi))$$
is a Poisson conformal superalgebra.
\end{cor}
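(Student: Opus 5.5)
The plan is to obtain the statement as a direct specialization of Theorem \ref{tw1}, reading the twisted operations there as $\circ_\lambda(\varphi'\otimes\psi')$ and $[-_\lambda-](\varphi'\otimes\psi')$. This is the convention actually used in the proof of Theorem \ref{tsa}, on which Theorem \ref{tw1} is modeled. With $\varphi'=\varphi^{-1}$ and $\psi'=\psi^{-1}$ the operations become
$$x\bullet_\lambda y=\varphi^{-1}(x)\circ_\lambda\psi^{-1}(y),\qquad \ldb x_\lambda y\rdb=[\varphi^{-1}(x)_\lambda\psi^{-1}(y)],$$
and the new structure maps are $\varphi\varphi^{-1}=\mathrm{id}$ and $\psi\psi^{-1}=\mathrm{id}$. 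So the first task is to check that the hypotheses of Theorem \ref{tw1} hold for this choice. The second is to note that a BiHom-Poisson conformal superalgebra whose structure maps are both the identity is exactly a Poisson conformal superalgebra.

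\textbf{Step 1: $\varphi^{-1},\psi^{-1}$ are even morphisms.} By regularity, $\varphi$ and $\psi$ are bijective even endomorphisms of both $\circ_\lambda$ and $[-_\lambda-]$, and they commute with $\partial$. Applying $\varphi^{-1}$ to both sides of
$$\varphi(\varphi^{-1}x\circ_\lambda\varphi^{-1}y)=x\circ_\lambda y$$
gives $\varphi^{-1}(x\circ_\lambda y)=\varphi^{-1}(x)\circ_\lambda\varphi^{-1}(y)$. The same argument works for the bracket and for $\psi^{-1}$. Moreover $\varphi^{-1}$ and $\psi^{-1}$ commute with $\partial$, with $\varphi$, with $\psi$, and with each other, because $\varphi\psi=\psi\varphi$. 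Hence all of $\varphi,\psi,\varphi^{-1},\psi^{-1}$ commute pairwise, and Theorem \ref{tw1} applies.

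\textbf{Step 2: read off the conclusion.} Theorem \ref{tw1} yields that $(P,\bullet_\lambda,\ldb-_\lambda-\rdb,\mathrm{id},\mathrm{id})$ is BiHom-Poisson. Substituting $\varphi=\psi=\mathrm{id}$ into the axioms reduces them to the untwisted ones:
\begin{itemize}
\item BiHom-associativity becomes ordinary conformal associativity;
\item the skew-symmetry \eqref{sks} and the conformal BiHom-Jacobi identity \eqref{bhj} become the usual Lie conformal axioms;
\item the conformal BiHom-Leibniz identity becomes $[x_\lambda(y\ast_\mu z)]=[x_\lambda y]\ast_{\lambda+\mu}z+(-1)^{|x||y|}y\ast_\mu[x_\lambda z]$.
\end{itemize}
This is precisely the definition of a Poisson conformal superalgebra.

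\textbf{Main obstacle.} The only real concern is that Theorem \ref{tw1} is proved only by reference to Theorem \ref{tsa}. If needed, I would verify the Leibniz identity for the twisted structure directly. Using the Leibniz identity for $P$ at the triple $(\varphi^{-2}\psi^{-1}x,\ \varphi^{-1}\psi^{-1}y,\ \psi^{-2}z)$ with structure maps $\varphi,\psi$, one computes
$$\ldb x_\lambda(y\bullet_\mu z)\rdb=[\varphi^{-1}x_\lambda(\varphi^{-1}\psi^{-1}y\circ_\mu\psi^{-2}z)],$$
where $\varphi^{-1}x=\varphi\psi(\varphi^{-2}\psi^{-1}x)$. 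The two resulting terms match $\ldb x_\lambda y\rdb\bullet_{\lambda+\mu}z$ and $(-1)^{|x||y|}y\bullet_\mu\ldb x_\lambda z\rdb$ after pulling $\varphi^{-1}$ and $\psi^{-1}$ through the products by multiplicativity. The other axioms are handled by the same substitution.
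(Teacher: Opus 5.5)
Your proposal is correct and follows the paper's route: the corollary comes from specializing Theorem~\ref{tw1} to $\varphi'=\varphi^{-1}$, $\psi'=\psi^{-1}$ (legitimate by regularity), so the structure maps become the identity and the BiHom-Poisson axioms reduce to the Poisson conformal ones. You read the twisted operations as $\circ_\lambda(\varphi'\otimes\psi')$, i.e.\ $x\bullet_\lambda y=\varphi^{-1}(x)\circ_\lambda\psi^{-1}(y)$, and this is the right reading; the $\varphi\otimes\psi$ in the statement is a typo carried over from Theorem~\ref{tw1}, and with that literal product the multiplication is not associative in general. Your direct verification of the Leibniz identity at the triple $(\varphi^{-2}\psi^{-1}x,\varphi^{-1}\psi^{-1}y,\psi^{-2}z)$ also checks out.
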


By taking $\varphi=\psi=id$ and then putting $\varphi'=\varphi$ and $\psi'=\psi$ in Theorem \ref{tw1}, we obtain :
 \begin{cor}
 Let $(P, \circ_\lambda, [-_\lambda-])$ be a Poisson conformal superalgebra. Then,
$$P_{(\varphi, \psi)}=(P, \bullet_\lambda:=\circ_\lambda, \ldb-_\lambda-\rdb:=[-_\lambda-], \varphi, \psi)$$
is a  BiHom-Poisson conformal superalgebra.
\end{cor}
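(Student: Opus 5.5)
Since $(P,\circ_\lambda,[-_\lambda-])$ is an ordinary Poisson conformal superalgebra, I read it as a BiHom-Poisson conformal superalgebra with $\varphi=\psi=id$, and I take the given $\varphi,\psi$ to be two commuting even morphisms of this Poisson conformal superalgebra that commute with $\partial$. The corollary is then Theorem \ref{tw1} applied with structure maps $id,id$ and twisting maps $\varphi'=\varphi$, $\psi'=\psi$. The twisted operations are meant to be $\bullet_\lambda=\circ_\lambda(\varphi\otimes\psi)$ and $\ldb-_\lambda-\rdb=[-_\lambda-](\varphi\otimes\psi)$, as in Theorem \ref{tsa}; the display in the statement abbreviates this, since with untwisted products and nontrivial $\varphi,\psi$ BiHom-associativity fails in general. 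The structure maps become $id\cdot\varphi=\varphi$ and $id\cdot\psi=\psi$.

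\textbf{Step 1 (BiHom-associativity).} Take $\varphi=\psi=id$ in the last part of Theorem \ref{tsa}. Directly:
\begin{eqnarray}
\varphi(x)\bullet_\lambda(y\bullet_\mu z)&=&\varphi^2(x)\circ_\lambda(\varphi\psi(y)\circ_\mu\psi^2(z))\nonumber\\
&=&(\varphi^2(x)\circ_\lambda\varphi\psi(y))\circ_{\lambda+\mu}\psi^2(z)\nonumber\\
&=&(x\bullet_\lambda y)\bullet_{\lambda+\mu}\psi(z).\nonumber
\end{eqnarray}
Here the first line uses that $\psi$ is multiplicative, and the third line uses that $\varphi$ is multiplicative.

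\textbf{Step 2 (BiHom-Lie structure).} This is Proposition 2.2 of \cite{T}, quoted above, with $\varphi=\psi=id$. Concretely, skew-symmetry of $\ldb\psi(x)_\lambda\varphi(y)\rdb=[\varphi\psi(x)_\lambda\varphi\psi(y)]$ follows from ordinary skew-symmetry applied to $\varphi\psi(x)$ and $\varphi\psi(y)$. For the Jacobi identity, each term $\ldb\psi^2(x)_\lambda\ldb\psi(y)_\mu\varphi(z)\rdb\rdb$ becomes $\varphi\psi^2$ applied to $[x_\lambda[y_\mu z]]$, because $\varphi$ and $\psi$ are bracket morphisms commuting with $\partial$. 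The identity is then the image of the ordinary conformal Jacobi identity under $\varphi\psi^2$.

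\textbf{Step 3 (BiHom-Leibniz identity).} I expect this to be the only point needing care. One computes
$$\ldb\varphi\psi(x)_\lambda(y\bullet_\mu z)\rdb=\varphi^2\psi(x)\circ\ldots$$
explicitly, that is, as $[\varphi^2\psi(x)_\lambda(\varphi\psi(y)\circ_\mu\psi^2(z))]$. Applying the untwisted Leibniz rule gives two terms. The first is $[\varphi^2\psi(x)_\lambda\varphi\psi(y)]\circ_{\lambda+\mu}\psi^2(z)$, which equals $\ldb\psi(x)_\lambda y\rdb\bullet_{\lambda+\mu}\psi(z)$. The second is $(-1)^{|x||y|}\varphi\psi(y)\circ_\mu[\varphi^2\psi(x)_\lambda\psi^2(z)]$, which equals $(-1)^{|x||y|}\psi(y)\bullet_\mu\ldb\varphi(x)_\lambda z\rdb$.

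The main obstacle is bookkeeping: checking that the exponents of $\varphi$ and $\psi$ match exactly on each side. This works precisely because $\varphi$ and $\psi$ commute, preserve both operations, and commute with $\partial$. The last property keeps sesquilinearity intact and makes $\lambda$-substitutions such as $-\lambda-\partial$ pass through $\varphi$ and $\psi$.
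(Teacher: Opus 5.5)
Your proposal is correct and follows the paper's route: the paper obtains this corollary by specializing Theorem \ref{tw1} to structure maps $id,id$ with twisting maps $\varphi'=\varphi$, $\psi'=\psi$, exactly as you do. Your explicit checks of sesquilinearity, BiHom-associativity, skew-symmetry, Jacobi and the Leibniz rule are all sound.

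You are also right to read the products as $\circ_\lambda(\varphi\otimes\psi)$ and $[-_\lambda-](\varphi\otimes\psi)$. Theorem \ref{tw1} as printed twists by the old structure maps, which are $id$ here, and this is what produces the untwisted products in the displayed statement. That literal version is false in general, because BiHom-associativity would then require $\varphi(x)\circ_\lambda(y\circ_\mu z)=x\circ_\lambda(y\circ_\mu\psi(z))$. The correct twisting is by $\varphi'\otimes\psi'$, as in Theorem \ref{tsa}.
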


By taking $\varphi'=\psi^n=\varphi^n$ in Theorem \ref{tw1}, we get :

\begin{cor}\label{tw2}
 Let $(P, \circ_\lambda, [-_\lambda-], \varphi, \psi )$ be a multiplicative BiHom-Poisson conformal superalgebra. Then, for any on negative integer $n$,
$$P_n=(P, \bullet^{n}_\lambda=\varphi^{n}\circ_\lambda, \ldb-_\lambda-\rdb^{n}=\psi^{n}[-_\lambda-], \varphi^{n+1}, \varphi^{n}\psi)$$
is a  BiHom-Poisson conformal superalgebra.
\end{cor}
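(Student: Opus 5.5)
The plan is to obtain the statement as the specialization of Theorem \ref{tw1} indicated in the sentence preceding it, namely $\varphi'=\psi'$ with $\varphi'=\psi^n=\varphi^n$. I read Theorem \ref{tw1} with the intended twisted operations $\circ_\lambda(\varphi'\otimes\psi')$ and $[-_\lambda-](\varphi'\otimes\psi')$, as in Theorem \ref{tsa}, rather than the literal $\circ_\lambda(\varphi\otimes\psi)$.

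\textbf{Step 1: the twisting maps are morphisms.} Set $\varphi'=\psi'=\varphi^n$, and use that the paper's wording stipulates $\psi^n=\varphi^n$.
\begin{itemize}
\item Since $P$ is multiplicative, $\varphi$ and $\psi$ are even endomorphisms for both $\circ_\lambda$ and $[-_\lambda-]$, and they commute with $\partial$ and with each other.
\item Hence $\varphi^n$ is an even morphism of BiHom-Poisson conformal superalgebras. This is a straightforward induction on $n$, using $\varphi\psi=\psi\varphi$ to see that $\varphi^n$ commutes with both structure maps.
\item The four maps $\varphi,\psi,\varphi',\psi'$ commute pairwise, so the hypotheses of Theorem \ref{tw1} are satisfied.
\end{itemize}

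\textbf{Step 2: identify the twisted structure.} Theorem \ref{tw1} gives the BiHom-Poisson conformal superalgebra $(P,\circ_\lambda(\varphi^n\otimes\varphi^n),[-_\lambda-](\varphi^n\otimes\varphi^n),\varphi^{n+1},\psi\varphi^n)$. Multiplicativity then rewrites the operations:
\[
\varphi^n(x)\circ_\lambda\varphi^n(y)=\varphi^n(x\circ_\lambda y),\qquad [\varphi^n(x)_\lambda\varphi^n(y)]=[\psi^n(x)_\lambda\psi^n(y)]=\psi^n[x_\lambda y].
\]
The second equality uses $\varphi^n=\psi^n$ followed by multiplicativity of $\psi$. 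Finally $\psi\varphi^n=\varphi^n\psi$, so the structure maps are exactly $\varphi^{n+1}$ and $\varphi^n\psi$. This yields $P_n$ as stated; the case $n=0$ is trivial.

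\textbf{Main obstacle.} The crux is the bracket $\psi^n[-_\lambda-]$ being paired with the product $\varphi^n\circ_\lambda$ and with the maps $(\varphi^{n+1},\varphi^n\psi)$. Skew-symmetry is harmless without any extra hypothesis. Setting $u=\varphi^n\psi^n x$ and $v=\varphi^n\psi^n y$, one has $\psi^n[\varphi^n\psi(x)_\lambda\varphi^{n+1}(y)]=[\psi(u)_\lambda\varphi(v)]$, so (\ref{sks}) applies directly.

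For the BiHom-Jacobi and Leibniz identities, however, expanding the first Jacobi term gives
\[
\psi^n\varphi^n\bigl[\varphi^n\psi^2(x)_\lambda\,\psi^n[\psi(y)_\mu\varphi(z)]\bigr],
\]
in which the arguments carry $\varphi^n$ on $x$ but $\psi^n$ on $y$ and $z$. The three cyclic terms then match a single instance of (\ref{bhj}) only when $\varphi^n=\psi^n$ on the relevant elements. So I would first verify the identities directly, and if that fails, make explicit the hypothesis $\varphi^n=\psi^n$ that the paper's phrase ``$\varphi'=\psi^n=\varphi^n$'' encodes. Under that hypothesis everything reduces to Theorem \ref{tw1} as in Steps 1 and 2.
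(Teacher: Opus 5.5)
Your route is the paper's own: its entire proof is the remark that one takes $\varphi'=\psi'=\varphi^n$ in Theorem~\ref{tw1}, read with the twist $\circ_\lambda(\varphi'\otimes\psi')$ as you do. Your Steps 1 and 2 carry this out correctly.

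\emph{What you left undecided has a definite answer.} You did not settle whether the identities for the printed bracket $\psi^n[-_\lambda-]$ hold without assuming $\varphi^n=\psi^n$. They do not, so the printed statement is false as it stands. Here is a counterexample.
\begin{itemize}
\item Let $P=\mathbb{C}[\partial]\otimes\mathfrak{sl}_2$ (purely even) with zero product and $\psi=\mathrm{id}$.
\item Let $\varphi$ be the $\mathbb{C}[\partial]$-linear extension of the automorphism $\theta$ with $\theta(e)=se$, $\theta(f)=s^{-1}f$, $\theta(h)=h$.
\item Set $[a_\lambda b]=[\theta(a),b]_0$ for $a,b\in\mathfrak{sl}_2$.
\end{itemize}
This is a multiplicative BiHom-Poisson conformal algebra. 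Indeed $[\psi(a)_\lambda\varphi(b)]=\theta[a,b]_0$ and $[\psi^2(a)_\lambda[\psi(b)_\mu\varphi(c)]]=\theta[a,[b,c]_0]_0$, so skew-symmetry and Jacobi come from $\mathfrak{sl}_2$. Associativity and the Leibniz rule are trivial because the product is zero.

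In $P_n$ the bracket is again $[-_\lambda-]$ and the structure maps are $(\theta^{n+1},\theta^n)$. So the first BiHom-Jacobi term is
$$[\theta^{2n}(a)_\lambda[\theta^n(b)_\mu\theta^{n+1}(c)]]=\theta^{n+1}[\theta^n(a),[b,c]_0]_0 .$$
For $(a,b,c)=(e,f,h)$ the cyclic sum equals $2(s^n-s^{-n})h$, which is nonzero whenever $s^{2n}\neq 1$. This is exactly the mismatch you spotted: $\varphi^n$ lands on the outer argument but $\psi^n$ on the inner ones.

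\emph{So a correction is unavoidable, and you should commit to one rather than hedge.}
\begin{itemize}
\item Adding the hypothesis $\varphi^n=\psi^n$, as you propose, gives a true but very narrow statement. In the example above it forces $\theta^n=\mathrm{id}$.
\item The better correction matches what Theorem~\ref{tw1} actually outputs for $\varphi'=\psi'=\varphi^n$: replace the bracket by $\varphi^n[-_\lambda-]=[-_\lambda-](\varphi^n\otimes\varphi^n)$.
\end{itemize}
With the second reading, your Steps 1 and 2 are already a complete proof that uses only multiplicativity, with no extra hypothesis. Under that reading the $\psi^n$ in the printed bracket, and in the paper's hint ``$\varphi'=\psi^n=\varphi^n$'', is best regarded as a misprint for $\psi'$.
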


Whenever $G=\{e\}$, we recover
\begin{cor}(\cite{S}, Theorem 1)\\
 Let $(P, \circ_\lambda, [-_\lambda-], \varphi, \psi )$ be a  BiHom-Poisson conformal algebra and $\varphi', \psi' : P\rightarrow P$ two morphisms of
BiHom-Poisson conformal algebras such that the maps $\varphi, \varphi', \psi, \psi'$ commute pairewise. Then,
$$P_{(\varphi', \psi')}=(P, \bullet_\lambda:=\circ_\lambda(\varphi\otimes\psi), \ldb-_\lambda-\rdb:=[-_\lambda-](\varphi\otimes\psi), \varphi\varphi', \psi\psi')$$
is a  BiHom-Poisson conformal algebra.

Moreover, if $\tilde P=(\bar P, \tilde\circ_\lambda, \tilde{[-_\lambda-]}, \tilde\varphi, \tilde\psi)$ and $P_{(\tilde\varphi', \tilde\psi')}$ is another  BiHom-Poisson conformal algebra and $f : P\rightarrow \tilde P$ is a morphism such that $f$ commutes with $\tilde\varphi'$ and $\tilde\psi'$, Then, $f$ is also a morphism from $P_{(\varphi', \psi')}$ to
$P_{(\tilde\varphi', \tilde\psi')}$.
\end{cor}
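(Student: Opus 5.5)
The plan is to derive this statement as the special case of Theorem \ref{tw1} in which the $\mathbb{Z}_2$-grading is trivial. A BiHom-Poisson conformal algebra is exactly a BiHom-Poisson conformal superalgebra $P=P_{\bar 0}\oplus P_{\bar 1}$ with $P_{\bar 1}=0$. For such $P$ every element is homogeneous of degree $\bar 0$, so all the signs $(-1)^{|x||y|}$ occurring in the skew-symmetry (\ref{sks}), the BiHom-Jacobi identity (\ref{bhj}) and the BiHom-Leibniz identity are equal to $1$. In the same way, any linear map between such spaces is automatically even. Consequently the hypotheses of the statement are precisely those of Theorem \ref{tw1} applied to a superalgebra concentrated in even degree, and its two conclusions (the twisted structure and the morphism property) are exactly what Theorem \ref{tw1} gives. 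So the first step is just to record this dictionary and invoke Theorem \ref{tw1}.

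Since the proof of Theorem \ref{tw1} in the paper is only indicated, I will also sketch the direct verification. I read the twisted operations as $a\bullet_\lambda b=\varphi'(a)\circ_\lambda\psi'(b)$ and $\ldb a_\lambda b\rdb=[\varphi'(a)_\lambda\psi'(b)]$, consistent with Theorem \ref{tsa}. Sesquilinearity is immediate because $\varphi',\psi'$ commute with $\partial$. BiHom-associativity follows exactly as in the second half of the proof of Theorem \ref{tsa}. The BiHom-Lie structure is the twisting result of \cite{T} (Proposition 2.2) quoted above.

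For the BiHom-Leibniz identity, I use that $\varphi',\psi'$ are morphisms commuting with $\varphi,\psi$, and rewrite
$$\ldb\varphi\varphi'\psi\psi'(x)_\lambda(y\bullet_\mu z)\rdb=[\varphi\psi(X)_\lambda(Y\circ_\mu Z)],$$
where $X=\varphi'^2\psi'(x)$, $Y=\varphi'\psi'(y)$ and $Z=\psi'^2(z)$. Applying the original Leibniz identity splits this into two terms. The first is
$$[\psi(X)_\lambda Y]\circ_{\lambda+\mu}\psi(Z)=[\varphi'^2\psi\psi'(x)_\lambda\varphi'\psi'(y)]\circ_{\lambda+\mu}\psi\psi'^2(z),$$
which equals $\ldb\psi\psi'(x)_\lambda y\rdb\bullet_{\lambda+\mu}\psi\psi'(z)$ once $\varphi'$ is pulled out of the bracket. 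The second term, $\psi(Y)\circ_\mu[\varphi(X)_\lambda Z]$, is matched with $\psi\psi'(y)\bullet_\mu\ldb\varphi\varphi'(x)_\lambda z\rdb$ in the same way.

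For the morphism statement, the relations $f\varphi'=\tilde\varphi' f$ and $f\psi'=\tilde\psi' f$, together with $f$ being a morphism of the untwisted structures, give
$$f(a\bullet_\lambda b)=f(\varphi'(a))\,\tilde\circ_\lambda\, f(\psi'(b))=\tilde\varphi' f(a)\,\tilde\circ_\lambda\,\tilde\psi' f(b),$$
and the analogous identity holds for the bracket.

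The main obstacle is bookkeeping rather than a conceptual difficulty. One has to distribute the powers of $\varphi',\psi'$ correctly so that the arguments have exactly the shape $\varphi\psi(X)$, $\psi(X)$, $\varphi(X)$ required by the Leibniz and Jacobi identities. This relies throughout on pairwise commutativity of $\varphi,\psi,\varphi',\psi'$ and on $\varphi',\psi'$ being multiplicative for both $\circ_\lambda$ and $[-_\lambda-]$. The Jacobi identity is the longest of these checks, and there I will simply cite \cite{T}.
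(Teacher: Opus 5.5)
Your proposal is correct and follows the paper's route. The paper gets this corollary simply by specializing Theorem \ref{tw1} to the trivial grading, and it justifies Theorem \ref{tw1} by the same twisting computation as in Theorem \ref{tsa}; your direct sketch of the associativity, Leibniz and morphism checks carries out exactly that computation. You are also right to read the twisted operations as $\circ_\lambda(\varphi'\otimes\psi')$ and $[-_\lambda-](\varphi'\otimes\psi')$, since the $\varphi\otimes\psi$ printed in the statement is a misprint. Your assumption that $\varphi',\psi'$ commute with $\partial$ is part of the usual notion of a morphism of conformal algebras.
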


Now, we give a result of twisting by averaging operator.

\begin{thm}\label{bk1}
Let $(P,\circ, [-, -], \varphi, \psi)$ be a BiHom-Poisson conformal superalgebra and $\beta : P\rightarrow P$ an averaging operator i.e. an even $\mathbb{C}[\partial]$-linear map that commutes with $\varphi$ and $\psi$, and
\begin{eqnarray}
 \beta(\beta(x)\circ_\lambda y)=\beta(x)\circ_\lambda\beta(y)=\beta(x\circ_\lambda \beta(y))\quad\mbox{and}\quad \beta([\beta(x)_\lambda y])=[\beta(x)_\lambda\beta(y)]=\beta([x_\lambda \beta(y)]).\nonumber
\end{eqnarray}

Let us define two new operations $\bullet_\lambda : P\times P\rightarrow P[\lambda]$ and $\ldb-_\lambda, -\rdb : P\times P\rightarrow P[\lambda]$ by
$$x\bullet_\lambda y:= \beta(x)\circ_\lambda y, \quad\ldb x_\lambda y\rdb:=[\beta(x)_\lambda y].$$
Then, $(P, \bullet_\lambda, \ldb-_\lambda- \rdb, \varphi, \psi)$ is also a BiHom-Poisson conformal superalgebra.
\end{thm}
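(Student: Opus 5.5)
We verify the three groups of axioms for the new operations in turn: sesquilinearity, BiHom-associativity of $\bullet_\lambda$, the BiHom-Lie axioms for $\ldb-_\lambda-\rdb$, and the conformal BiHom-Leibniz identity. Two features of $\beta$ do all the work. Since $\beta$ is $\mathbb{C}[\partial]$-linear and commutes with $\varphi,\psi$, we may freely move it past the structure maps and $\partial$. Since $\beta$ is averaging, we have $\beta(x)\circ_\lambda\beta(y)=\beta(\beta(x)\circ_\lambda y)=\beta(x\circ_\lambda\beta(y))$, and similarly for the bracket.

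\emph{Sesquilinearity.} $\partial x\bullet_\lambda y=\beta(\partial x)\circ_\lambda y=\partial\beta(x)\circ_\lambda y=-\lambda\, x\bullet_\lambda y$. Similarly, $x\bullet_\lambda\partial y=(\partial+\lambda)x\bullet_\lambda y$, and the same holds for the bracket.

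\emph{BiHom-associativity.} We compute
$\varphi(x)\bullet_\lambda(y\bullet_\mu z)=\varphi\beta(x)\circ_\lambda(\beta(y)\circ_\mu z)=(\beta(x)\circ_\lambda\beta(y))\circ_{\lambda+\mu}\psi(z)$
using BiHom-associativity of $\circ_\lambda$. By the averaging identity, $\beta(x)\circ_\lambda\beta(y)=\beta(\beta(x)\circ_\lambda y)=\beta(x\bullet_\lambda y)$, so the right side is $(x\bullet_\lambda y)\bullet_{\lambda+\mu}\psi(z)$.

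\emph{Conformal BiHom-Jacobi identity.} The same trick gives
$\ldb\psi^2(x)_\lambda\ldb\psi(y)_\mu\varphi(z)\rdb\rdb=[\psi^2\beta(x)_\lambda[\psi\beta(y)_\mu\varphi(z)]]$,
and similarly for the two cyclic terms. So the cyclic sum should reduce to the BiHom-Jacobi identity of $[-_\lambda-]$ applied to $\beta(x),\beta(y),z$. The terms do not match immediately: in the second and third terms $z$ is the inner left slot, giving for instance $[\psi^2\beta(y)_\mu[\psi\beta(z)_{-\partial-\lambda}\varphi(x)]]$ rather than $[\psi^2\beta(y)_\mu[\psi\beta(z)_{-\partial-\lambda}\varphi\beta(x)]]$. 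The averaging property lets us redistribute $\beta$:
\[
[\psi^2\beta(y)_\mu[\psi\beta(z)_{\nu}\varphi(x)]]\ \text{ versus }\ \beta\text{ applied to expressions with }\beta\text{ on all three slots.}
\]
\textbf{This is the main obstacle.} I would first try applying $\beta$ to the whole cyclic sum and using $\beta([\beta(a)_\lambda b])=[\beta(a)_\lambda\beta(b)]$ repeatedly. This turns $\beta$ of each term into the term of the original Jacobi identity for $\beta(x),\beta(y),\beta(z)$, which sums to zero. However, this yields only $\beta(\text{sum})=0$, so one may need injectivity of $\beta$ or an extra argument.

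\emph{Skew-symmetry.} The same issue appears here. $\ldb\psi(x)_\lambda\varphi(y)\rdb=[\psi\beta(x)_\lambda\varphi(y)]$, and skew-symmetry of $[-_\lambda-]$ does not directly convert it into $-(-1)^{|x||y|}[\psi\beta(y)_{-\partial-\lambda}\varphi(x)]$. I would handle this the same way as the Jacobi identity, or interpret the statement as requiring the identities after applying $\beta$. I expect that the authors' argument follows the pattern of \cite{S} and checks skew-symmetry and Jacobi only modulo these averaging manipulations.

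\emph{Leibniz identity.}
$\ldb\varphi\psi(x)_\lambda(y\bullet_\mu z)\rdb=[\varphi\psi\beta(x)_\lambda(\beta(y)\circ_\mu z)]$, which by the original Leibniz rule for $\beta(x),\beta(y),z$ equals
$[\psi\beta(x)_\lambda\beta(y)]\circ_{\lambda+\mu}\psi(z)+(-1)^{|x||y|}\psi\beta(y)\circ_\mu[\varphi\beta(x)_\lambda z]$.
By the averaging identity, $[\psi\beta(x)_\lambda\beta(y)]=\beta\ldb\psi(x)_\lambda y\rdb$, so the first term is $\ldb\psi(x)_\lambda y\rdb\bullet_{\lambda+\mu}\psi(z)$. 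The second term is exactly $(-1)^{|x||y|}\psi(y)\bullet_\mu\ldb\varphi(x)_\lambda z\rdb$. This completes the identity.
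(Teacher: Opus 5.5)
Your verifications of sesquilinearity, of BiHom-associativity of $\bullet_\lambda$ and of the conformal BiHom-Leibniz identity are correct. The genuine gap is the one you flag yourself: you never prove skew-symmetry or the conformal BiHom-Jacobi identity for $\ldb-_\lambda-\rdb$. Neither escape route you suggest proves the stated theorem. Applying $\beta$ to the Jacobi sum $S$ only gives $\beta(S)=0$, and reading the axioms ``after applying $\beta$'' changes the statement.

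In fact no argument can close this gap, because the statement is false as given. Take $\varphi=\psi=\mathrm{id}$, and let $\mathfrak g=\mathbb{C}e_1\oplus\mathbb{C}e_2$ be the (even) Lie algebra with $[e_1,e_2]=e_2$. Let $P=\mathbb{C}[\partial]\otimes\mathfrak g$ be its current Lie conformal algebra, with $[a_\lambda b]=[a,b]$ for $a,b\in\mathfrak g$ and zero product $\circ_\lambda=0$; this is trivially a BiHom-Poisson conformal superalgebra. Let $\beta$ be the $\mathbb{C}[\partial]$-linear extension of the projection $e_1\mapsto e_1$, $e_2\mapsto 0$. It is an idempotent Lie endomorphism, so $\beta([\beta(x)_\lambda y])=[\beta(x)_\lambda\beta(y)]=\beta([x_\lambda\beta(y)])$, and the conditions on $\circ_\lambda$ hold trivially. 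Yet $\ldb {e_1}_\lambda e_2\rdb=e_2$ while $\ldb {e_2}_{-\partial-\lambda}e_1\rdb=[\beta(e_2)_{-\partial-\lambda}e_1]=0$, so skew-symmetry fails. For $(x,y,z)=(e_1,e_1,e_2)$ the Jacobi sum is $S=e_2+0+0\neq 0$ although $\beta(S)=0$. This is exactly the situation your `apply $\beta$' idea cannot rule out.

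The paper's own proof is the single sentence that the result follows by a simple computation, so it does not supply the missing step either. There is no sign in it of a check `modulo averaging manipulations'. To get a true statement you need an extra hypothesis such as $[\beta(x)_\lambda y]=[x_\lambda\beta(y)]$ for all $x,y$; this follows from the averaging identities whenever $\beta$ is injective. Under it, skew-symmetry of $\ldb-_\lambda-\rdb$ is just skew-symmetry of $[-_\lambda-]$ for the pair $\beta(x),y$. The new Jacobi sum for $x,y,z$ then coincides term by term with the old one for $\beta(x),\beta(y),z$. In the second term, move $\beta$ from $\psi(z)$ to $\varphi(x)$. In the third term, move $\beta$ from $\psi^2(z)$ onto the inner bracket and then use $\beta([\beta(a)_\lambda b])=[\beta(a)_\lambda\beta(b)]$. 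This is the rigorous form of your idea. Without such a hypothesis only your three correct verifications survive. As in the classical case, where averaging operators turn Lie algebras into Leibniz algebras, $\ldb-_\lambda-\rdb$ need not be skew-symmetric.
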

\begin{proof}
 The proof is straighforward by a simple computation.
\end{proof}

We end this subsection by twisting by Rota-Baxter operator.

\begin{thm}\label{R}
 Let $(P, \circ_\lambda, [-_\lambda-], \varphi, \psi )$ be a BiHom-Poisson conformal superalgebra and $R : P\rightarrow P$ be a Rota-Baxter operator of weight
 $k\in\mathbb{K}$ on $P$. Then,
 $$P_{k, R}=(P, \bullet_\lambda, \ldb-_\lambda-\rdb, \varphi, \psi) $$ is again a  BiHom-Poisson conformal superalgebra with
\begin{eqnarray}
x\bullet_\lambda y &=& R(x)\circ_\lambda y + x\circ_\lambda R(y) +k x\circ_\lambda y,\\
\ldb x_\lambda y\rdb &=& [R(x)_\lambda y] + [x_\lambda R(y)] + k[x_\lambda y],
\end{eqnarray}
for all $x, y\in\mathcal{H}(P)$.\\
Moreover, $R$ is a morphism of  $(P, \bullet_\lambda, \ldb-_\lambda-\rdb, \varphi, \psi )$
 into $(P, \circ_\lambda, [-_\lambda-], \varphi, \psi )$.
\end{thm}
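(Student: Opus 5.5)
The plan splits the theorem into its three defining pieces. The associative piece and the Lie piece come directly from earlier results. Since $R$ is a Rota-Baxter operator for $\circ_\lambda$, Proposition \ref{rac} shows that $(P,\bullet_\lambda,\varphi,\psi)$ is BiHom-associative. Since $R$ is also a Rota-Baxter operator for $[-_\lambda-]$, Proposition \ref{rlc} shows that $(P,\ldb-_\lambda-\rdb,\varphi,\psi)$ is a BiHom-Lie conformal superalgebra. Both propositions also give that $R$ preserves each product: the Rota-Baxter identities read $R(x)\circ_\lambda R(y)=R(x\bullet_\lambda y)$ and $[R(x)_\lambda R(y)]=R(\ldb x_\lambda y\rdb)$. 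Together with $R\varphi=\varphi R$ and $R\psi=\psi R$, this proves the \emph{moreover} part. Sesquilinearity is routine, because $R$ is $\mathbb{C}[\partial]$-linear.

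What remains is the conformal BiHom-Leibniz identity:
$$\ldb\varphi\psi(x)_\lambda(y\bullet_\mu z)\rdb=\ldb\psi(x)_\lambda y\rdb\bullet_{\lambda+\mu}\psi(z)+(-1)^{|x||y|}\psi(y)\bullet_\mu\ldb\varphi(x)_\lambda z\rdb.$$
I would expand the left-hand side in the same style as the proof of Proposition \ref{rac}. First write $\ldb\varphi\psi(x)_\lambda w\rdb=[R\varphi\psi(x)_\lambda w]+[\varphi\psi(x)_\lambda R(w)]+k[\varphi\psi(x)_\lambda w]$ with $w=y\bullet_\mu z$. Then, in the middle term, replace $R(y\bullet_\mu z)$ by $R(y)\circ_\mu R(z)$. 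After moving $R$ through $\varphi$ and $\psi$, every term has the form $[\varphi\psi(a)_\lambda(b\circ_\mu c)]$, where each of $a,b,c$ is either $R$ applied to $x,y,z$ or the bare element. The terms are:
\begin{itemize}
\item those with $(a,b,c)\in\{(Rx,Ry,z),(Rx,y,Rz),(x,Ry,Rz)\}$, with coefficient $1$;
\item those with exactly one $R$, with coefficient $k$, together with $k(Rx,y,z)$ coming from the first term;
\item $k^2(x,y,z)$.
\end{itemize}
Apply the original Leibniz identity to each of these terms.

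Next, expand the right-hand side the same way. For example,
$$\ldb\psi(x)_\lambda y\rdb\bullet_{\lambda+\mu}\psi(z)=R(\ldb\psi(x)_\lambda y\rdb)\circ\psi(z)+\ldb\psi(x)_\lambda y\rdb\circ R\psi(z)+k\,\ldb\psi(x)_\lambda y\rdb\circ\psi(z).$$
In the first summand, rewrite $R(\ldb\psi x_\lambda y\rdb)=[R\psi x_\lambda Ry]$, then expand everything into ordinary brackets and products. Do the same for $\psi(y)\bullet_\mu\ldb\varphi(x)_\lambda z\rdb$. The sign $(-1)^{|x||y|}$ is unaffected, since $R$ is even.

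Finally, match terms by their pattern of $R$'s and power of $k$. The main obstacle is this bookkeeping. On the left, a term like $k[\varphi\psi Rx_\lambda(y\circ_\mu z)]$ appears once. Its Leibniz expansion gives $k[\psi Rx_\lambda y]\circ\psi z+k(-1)^{|x||y|}\psi y\circ[\varphi Rx_\lambda z]$, and these must be recognized among the right-side terms produced from the $k$-parts of the $\bullet$ and $\ldb\,\rdb$ expansions. I would organize both sides as a table indexed by (which of $x,y,z$ carry $R$, power of $k$) and check that the coefficients agree. This is the same combinatorics as in Propositions \ref{rac} and \ref{rlc}, so the count should close.
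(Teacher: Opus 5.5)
Your proposal is correct and follows essentially the same route as the paper: you take the associative and Lie parts from Proposition~\ref{rac} and Proposition~\ref{rlc}, then check the Leibniz identity. You expand the left side into the seven terms $[\varphi\psi(a)_\lambda(b\circ_\mu c)]$, apply the original Leibniz rule to each, expand the right side using $R(\ldb u_\lambda v\rdb)=[R(u)_\lambda R(v)]$, and match terms. The bookkeeping you leave open does close: both sides give the same fourteen terms, seven of the form $[\cdot_\lambda\cdot]\circ_{\lambda+\mu}\psi(\cdot)$ and seven of the form $(-1)^{|x||y|}\psi(\cdot)\circ_\mu[\cdot_\lambda\cdot]$, matched one-to-one by $R$-pattern and power of $k$.
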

\begin{proof}
The BiHom-associativity follows from Theorem \ref{rac} and the BiHom-Lie structure from Proposition \ref{rlc}. Now, we prove the Leibniz conformal identity. For any $x, y, z\in \mathcal{H}(P)$,
\begin{eqnarray}
 \ldb\psi\varphi(x)_{\lambda}(y\bullet_{\mu} z)\rdb
&=&\ldb\psi\varphi(x)_{\lambda}\Big(R(y)\circ_{\mu}z+y\circ_{\mu}R(z)+k y\circ_{\mu} z\Big)\rdb\nonumber\\
&=&[R\psi(x)_{\lambda}\Big(R(y)\circ_{\mu}z+y\circ_{\mu}R(z)+k y\circ_{\mu}z\Big)]\nonumber\\
&&+[\psi(x)_{\lambda}R\Big(R(y)\circ_{\mu}z+y\circ_{\mu}R(z)+k y\circ_{\mu}z\Big)]\nonumber\\
&&+k[\psi(x)_{\lambda}\Big(R(y)\circ_{\mu}z+y\circ_{\mu}R(z)+k y\circ_{\mu}z\Big)]\nonumber\\
&=&[R\psi(x)_{\lambda} (R(y)\circ_{\mu}z)]+[R\psi(x)_{\lambda} (y\circ_{\mu}R(z))]+k[R\psi(x)_{\lambda} (y\circ_{\mu}z)]\nonumber\\
&&+[\psi(x)_{\lambda} (R(y)\circ_{\mu}R(z))]\nonumber\\
&&+k[\psi(x)_{\lambda} (R(y)\circ_{\mu}z)]+k[\psi(x)_{\lambda} (y\circ_{\mu}R(z))]+k^2[\psi(x)_{\lambda} (y\circ_{\mu}z)]\nonumber.
\end{eqnarray}
By conformal BiHom-Leibniz identity,
\begin{eqnarray}
 \ldb\psi\varphi(x)_{\lambda} (y\bullet_{\mu} z)\rdb
&=&[R\psi(x)_{\lambda} R(y)]_{\lambda+\mu}\psi(z)+(-1)^{|x||y|}\psi(R(y))\circ_{\mu}[R\varphi(x)_{\lambda} z]\nonumber\\ &&+[R\psi(x)_{\lambda} y]\circ_{\lambda+\mu}\psi(R(z))+(-1)^{|x||y|}\psi(y)\circ_{\mu}[R\varphi(x)_{\lambda} R(z)]\nonumber\\
&&+k[R\psi(x)_{\lambda} y]\circ_{\lambda+\mu}\psi(z)+\lambda(-1)^{|x||y|}\psi(y)\circ_\mu[R\varphi(x)_{\lambda} z]\nonumber\\
&&+[\psi(x)_{\lambda} R(y)]\circ_{\lambda+\mu}\psi(R(z))+(-1)^{|x||y|}\psi(R(y))\circ_{\mu}[\varphi(x)_{\lambda} R(z)]\nonumber\\
&&+k[\psi(x)_{\lambda} R(y)]\circ_{\lambda+\mu}\psi(z)+(-1)^{|x||y|}k\psi R(y)\circ_{\mu}[\varphi(x)_{\lambda} z]\nonumber\\
&&+k[\psi(x)_{\lambda} y]\circ_{\lambda+\mu}\psi(R(z))+(-1)^{|x||y|}k\psi(y)\circ_{\mu}[\varphi(x)_{\lambda} R(z)]\nonumber\\
&&+k^2[\psi(x)_{\lambda} y]\circ_{\lambda+\mu}\psi(z)+
(-1)^{|x||y|}k^2\psi(y)\circ_{\mu}[\varphi(x)_{\lambda} z]\nonumber.
\end{eqnarray}
Furthermore,

\begin{eqnarray}
&&\ldb\psi(x)_{\lambda} y\rdb\bullet_{\lambda+\mu}\psi(z)+(-1)^{|x||y|}\psi(y)\bullet_{\mu}\ldb\varphi(x)_{\lambda} z\rdb=\nonumber\\
&&=\Big([R\psi(x)_{\lambda} y]+[\psi(x)_{\lambda} R(y)]+k[\psi(x)_{\lambda} y]\Big)\bullet_{\lambda+\mu}\psi(z)\nonumber\\
&&\quad+(-1)^{|x||y|}\psi(y)\bullet_{\mu}\Big([R\varphi(x)_{\lambda} z]+[\varphi(x)_{\lambda} R(z)]+k[\varphi(x)_{\lambda} z]\Big).\nonumber\\
&&=R\Big([R\psi(x)_{\lambda} y]+[\psi(x)_{\lambda} R(y)]+k[\psi(x)_{\lambda} y]\Big)\circ_{\lambda+\mu}\psi(z)\nonumber\\
&&\quad+\Big([R\psi(x)_{\lambda} y]+[\psi(x)_{\lambda} R(y)]+k[\psi(x)_{\lambda} y]\Big)\circ_{\lambda+\mu}R\psi(z)\nonumber\\
&&\quad+k\Big([R\psi(x)_{\lambda} y]+[\psi(x)_{\lambda} R(y)]+k[\psi(x)_{\lambda} y]\Big)\circ_{\lambda+\mu}\psi(z)\nonumber\\
&&\quad+(-1)^{|x||y|}R\psi(y)\circ_{\mu}\Big([R\varphi(x)_{\lambda} z]+[\varphi(x)_{\lambda} R(z)]+k[\varphi(x)_{\lambda} z]\Big)\nonumber\\
&&+(-1)^{|x||y|}\psi(y)\circ_{\mu}R\Big([R\varphi(x)_{\lambda} z]+[\varphi(x)_{\lambda} R(z)]+k[\varphi(x)_{\lambda} z])\nonumber\\
&&\quad+(-1)^{|x||y|}k\psi(y)\circ_{\mu}\Big([R\varphi(x)_{\lambda} z]+[\varphi(x)_{\lambda} R(z)]+k[\varphi(x)_{\lambda} z]\Big).\nonumber
\end{eqnarray}
\begin{eqnarray}
&&\ldb\psi(x)_{\lambda} y\rdb\bullet_{\lambda+\mu}\psi(z)+(-1)^{|x||y|}\psi(y)\bullet_{\mu}\ldb\varphi(x)_{\lambda} z\rdb=\nonumber\\
&&={[R\psi(x)_{\lambda} R(y)]}\circ_{\lambda+\mu}\psi(z)\nonumber\\
&&\quad+[R\psi(x)_{\lambda} y]\circ_{\lambda+\mu}\psi(R(z))+[\psi(x)_{\lambda} R(y)]\circ_{\lambda+\mu}\psi(R(z))+k[\psi(x)_{\lambda} y]\circ_{\lambda+\mu}\psi(R(z))\nonumber\\
&&\quad+k[R\psi(x)_{\lambda} y]\circ_{\lambda+\mu}\psi(z)+k[\psi(x)_{\lambda} R(y)]\circ_{\lambda+\mu}\psi(z)+k^2[\psi(x)_{\lambda} y]\circ_{\lambda+\mu}\psi(z)\nonumber\\
&&\quad+(-1)^{|x||y|}\psi(R(y))\circ_{\mu}[R\varphi(x)_{\lambda} z]+(-1)^{|x||y|}\psi(R(y))\circ_{\mu}[\varphi(x)_{\lambda} R(z)]\nonumber\\
&&\quad+(-1)^{|x||y|}k\psi R(y)\circ_{\mu}[\varphi(x)_{\lambda} z]+(-1)^{|x||y|}\psi(y)\circ_{\mu}[R\varphi(x)_{\lambda} R(z)]\nonumber\\
&&\quad+(-1)^{|x||y|}k\psi(y)\circ_{\mu}[R\varphi(x)_{\lambda} z]
+(-1)^{|x||y|}k\psi(y)\circ_{\mu}[\varphi(x)_{\lambda} R(z)]\nonumber\\
&&\quad+(-1)^{|x||y|}k^2\psi(y)\circ_{\mu}[\varphi(x)_{\lambda} z]\nonumber.
\end{eqnarray}
A simple comparaison allows us to draw the conclusion.
\end{proof}
Whenever $\lambda=0$, and $G=\{e\}$, we recover
\begin{cor}(\cite{S}, Proposition 5)\\
Under the Assumption of  Theorem \ref{R},
$$P_{0, R}=(P, \bullet_\lambda, \ldb-_\lambda-\rdb, \varphi, \psi) $$ is a  BiHom-Poisson conformal algebra.
\end{cor}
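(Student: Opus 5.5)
The statement to prove is the last corollary: under the hypotheses of Theorem \ref{R} (in the non-graded case), $P_{0,R}$ is a BiHom-Poisson conformal algebra. I would obtain it as a direct specialization of Theorem \ref{R} rather than by a new computation.

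First, take the weight $k=0$ in Theorem \ref{R}. The twisted operations then reduce to
$$x\bullet_\lambda y=R(x)\circ_\lambda y+x\circ_\lambda R(y),\qquad \ldb x_\lambda y\rdb=[R(x)_\lambda y]+[x_\lambda R(y)].$$
Second, specialize the grading to the trivial one, i.e. take every element even. Then all signs $(-1)^{|x||y|}$ are equal to $1$, and the conformal superalgebra axioms become the ordinary conformal axioms of \cite{S}:
\begin{enumerate}
\item[i)] sesquilinearity;
\item[ii)] conformal BiHom-associativity;
\item[iii)] skew-symmetry \eqref{sks};
\item[iv)] the BiHom-Jacobi identity \eqref{bhj};
\item[v)] the BiHom-Leibniz identity.
\end{enumerate}
Under these two specializations, a Rota-Baxter operator of weight $0$ on a BiHom-Poisson conformal algebra is exactly the data required in Theorem \ref{R}.

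The verification then splits into three parts, each covered by an earlier result.
\begin{enumerate}
\item[(a)] Conformal BiHom-associativity of $\bullet_\lambda$ follows from Proposition \ref{rac} with $k=0$.
\item[(b)] Skew-symmetry and the BiHom-Jacobi identity of $\ldb-_\lambda-\rdb$ follow from Proposition \ref{rlc} with $k=0$.
\item[(c)] The conformal BiHom-Leibniz identity follows from the term-by-term comparison in the proof of Theorem \ref{R}. With $k=0$ only the four $k$-free terms survive on each side: those involving $[R\psi(x)_\lambda R(y)]$, $[R\psi(x)_\lambda y]\circ\psi R(z)$, $[\psi(x)_\lambda R(y)]\circ\psi R(z)$, and the analogous right-hand terms. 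These match after applying the Leibniz identity of the original algebra and the Rota-Baxter identity $R(a)\circ_\lambda R(b)=R(R(a)\circ_\lambda b+a\circ_\lambda R(b))$.
\end{enumerate}

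The only step needing care is (c). In the printed computation, one stray coefficient ($\lambda$ in place of $k$) and a misplaced $R\psi$ versus $\psi R$ must be corrected. I would redo it with $k=0$ to confirm the matching, using that $R$ commutes with $\varphi$ and $\psi$ so that $R\psi=\psi R$ and $R\varphi=\varphi R$.
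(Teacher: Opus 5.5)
Your proposal is correct and follows the paper's route: the paper obtains this corollary simply by specializing Theorem \ref{R} to weight $k=0$ (its ``$\lambda=0$'' is a typo for $k=0$) and to the trivial grading, so your parts (a)--(c) just re-run the proof of Theorem \ref{R} in that case. One small slip: with $k=0$ there are six $k$-free terms on each side of the Leibniz comparison, not four. Three have the form $[\cdot_\lambda\cdot]\circ_{\lambda+\mu}\psi(\cdot)$ and three have the form $\psi(\cdot)\circ_\mu[\cdot_\lambda\cdot]$, and matching them uses the Rota--Baxter identity for the bracket as well as for $\circ_\lambda$.
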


With $\psi=\varphi$ in Theorem\ref{R}, we get
\begin{cor}
 If $(P, \circ_\lambda, [-_\lambda-], \varphi)$ is a Hom-Poisson conformal superalgebra and $R : P\rightarrow P$ is a Rota-Baxter operator of weight
 $k\in\mathbb{K}$ on $P$. Then,
 $$P_{k, R}=(P, \bullet_\lambda, \ldb-_\lambda-\rdb, \varphi) $$ is also a Hom-Poisson conformal superalgebra.
\end{cor}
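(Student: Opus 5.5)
The statement is the last corollary: with $\psi=\varphi$, a Rota-Baxter operator $R$ of weight $k$ on a Hom-Poisson conformal superalgebra $(P,\circ_\lambda,[-_\lambda-],\varphi)$ produces a new Hom-Poisson conformal superalgebra $P_{k,R}$. I will obtain it as the specialization $\psi=\varphi$ of Theorem \ref{R}, checking that each ingredient of that theorem survives the specialization.

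First, the identification. A Hom-Poisson conformal superalgebra $(P,\circ_\lambda,[-_\lambda-],\varphi)$ is the BiHom-Poisson conformal superalgebra $(P,\circ_\lambda,[-_\lambda-],\varphi,\varphi)$. Under $\psi=\varphi$ the axioms become the following.
\begin{itemize}
\item BiHom-associativity reads $\varphi(x)\circ_\lambda(y\circ_\mu z)=(x\circ_\lambda y)\circ_{\lambda+\mu}\varphi(z)$, which is Hom-associativity.
\item Skew-symmetry (\ref{sks}) and the BiHom-Jacobi identity (\ref{bhj}) become the Hom-versions with $\varphi$ applied to both arguments. This matches the remark after the definition of BiHom-Lie conformal superalgebras that $\varphi=\psi$ recovers Hom-Lie conformal superalgebras.
\item The Leibniz identity becomes $[\varphi^2(x)_\lambda(y\ast_\mu z)]=[\varphi(x)_\lambda y]\ast_{\lambda+\mu}\varphi(z)+(-1)^{|x||y|}\varphi(y)\ast_\mu[\varphi(x)_\lambda z]$.
\end{itemize}
A Rota-Baxter operator in the Hom sense commutes with $\varphi$ and satisfies the two Rota-Baxter identities. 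These are exactly the BiHom conditions with $\psi=\varphi$.

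Next, apply Theorem \ref{R} to $(P,\circ_\lambda,[-_\lambda-],\varphi,\varphi)$ and $R$. This gives that $(P,\bullet_\lambda,\ldb-_\lambda-\rdb,\varphi,\varphi)$ is a BiHom-Poisson conformal superalgebra, with $\bullet_\lambda$ and $\ldb-_\lambda-\rdb$ defined by the same formulas. Reading the conclusion back through the identification above yields the Hom-Poisson conformal superalgebra $P_{k,R}$. It also recovers, if wanted, that $R$ is a morphism $P_{k,R}\to P$.

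Where the argument actually rests on Theorem \ref{R}, its three inputs (Proposition \ref{rac}, Proposition \ref{rlc}, and the Leibniz computation) use only the commutation $R\varphi=\varphi R$, $R\psi=\psi R$ and the Rota-Baxter identities. None of them requires $\varphi\neq\psi$, so nothing breaks under specialization.

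The one point needing care is the Leibniz computation in Theorem \ref{R}, whose final comparison contains a stray factor ($\lambda$ in place of $k$ in one term). Before relying on it, I would re-expand both sides with $\psi=\varphi$ and sort terms by the number of $R$'s. The terms with two $R$'s match by the original Leibniz rule applied to $(R x,Ry,z)$, $(Rx,y,Rz)$, $(x,Ry,Rz)$, combined with the Rota-Baxter identity for $R(y)\circ_\mu R(z)$ and for $[R\varphi(x)_\lambda R(y)]$. The remaining terms, with one $R$ or with a $k$ or $k^2$ coefficient, match by linearity. This term-by-term comparison is the only real obstacle, and it is bookkeeping rather than a new idea.
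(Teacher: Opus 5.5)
Your proposal is correct and follows the paper's route exactly: the paper obtains this corollary by setting $\psi=\varphi$ in Theorem \ref{R}, and that theorem's ingredients use only $R\varphi=\varphi R$, $R\psi=\psi R$ and the two Rota--Baxter identities, as you observe. You are right to flag the stray $\lambda$ (it should be $k$) in the paper's Leibniz computation; one small correction to your sketch is that the terms with one $R$ or with coefficient $k^2$ also match only after applying the original Leibniz rule to $(Rx,y,z)$, $(x,Ry,z)$, $(x,y,Rz)$ and $(x,y,z)$, not by linearity alone.
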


\subsection{From extensions}

\begin{thm}
 Let $(P', \circ'_\lambda, [-_\lambda,-]', \varphi', \psi')$ and $(P'', \circ''_\lambda, [-_\lambda,-]'', \varphi'', \psi''´)$ be two BiHom-Poisson conformal superalgebras. Then,
 $$P=(P'\oplus P'', \circ_\lambda, [ -_\lambda -], \varphi, \psi)$$ is a BiHom-Poisson conformal superalgebra with :
 \begin{eqnarray}
 \partial &=&\partial'\oplus\partial'',\\
 \varphi=(\varphi''+\varphi'')(x''+x'')&=&\varphi'(x')+\varphi''(x''),\\
 \psi=(\psi'+\psi'')(x'+x'')&=&\psi'(x')+\psi''(x''),\\
 {(x'+x'')\circ_\lambda(y'+y'')}&:=& x'\circ'_\lambda y' +
 y'\circ''_\lambda y'',\\
  {}[(x'+x'')_\lambda(y'+y'')]&:=&[x'_\lambda y']'+[x''_\lambda y'']'',
 \end{eqnarray}
for any $x', y'\in P', x'', y''\in P''$.

% Moreover, if is a linear map. Then,
% is a morphism if and only if its graph
% is a BiHom-subalgebra of [see On BiHom-associative algebras].
\end{thm}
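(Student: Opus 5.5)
The direct sum statement should be proved componentwise. I read the (typo-laden) formulas in the statement as intended: $\varphi(x'+x'')=\varphi'(x')+\varphi''(x'')$, $\psi(x'+x'')=\psi'(x')+\psi''(x'')$, $(x'+x'')\circ_\lambda(y'+y'')=x'\circ'_\lambda y'+x''\circ''_\lambda y''$. The $\mathbb{Z}_2$-grading on $P'\oplus P''$ is $P_i=P'_i\oplus P''_i$, so a homogeneous element $x=x'+x''$ has $|x|=|x'|=|x''|$. This is the point that makes all sign factors $(-1)^{|x||y|}$ split uniformly over the two summands. With this convention every structure map is a direct sum of the corresponding maps on $P'$ and $P''$, and each operation sends $P'\times P''$ and $P''\times P'$ to $0$.

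First I check the BiHom conformal superalgebra axioms. The maps $\varphi$ and $\psi$ are even and commute, because $\varphi'\psi'=\psi'\varphi'$ and $\varphi''\psi''=\psi''\varphi''$. They commute with $\partial=\partial'\oplus\partial''$ componentwise. Sesquilinearity of $\circ_\lambda$ and $[-_\lambda-]$ follows by applying it in each summand; for instance,
$$\partial x\circ_\lambda y=\partial'x'\circ'_\lambda y'+\partial''x''\circ''_\lambda y''=-\lambda(x'\circ'_\lambda y'+x''\circ''_\lambda y'').$$
I also note that $\partial$ acting on $P[\lambda]$ is again componentwise, so expressions such as $y\circ_{-\lambda-\partial}x$ split as $y'\circ'_{-\lambda-\partial'}x'+y''\circ''_{-\lambda-\partial''}x''$.

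Next I handle the three identities: BiHom-associativity, skew-symmetry together with the BiHom-Jacobi identity, and the BiHom-Leibniz rule. Each is multilinear in $x,y,z$, and I evaluate both sides on $x=x'+x''$, $y=y'+y''$, $z=z'+z''$. Because all mixed products vanish and the structure maps preserve the summands, every nested expression equals the same expression computed in $P'$ plus the one computed in $P''$. For example,
$$[\varphi\psi(x)_\lambda(y\circ_\mu z)]=[\varphi'\psi'(x')_\lambda(y'\circ'_\mu z')]'+[\varphi''\psi''(x'')_\lambda(y''\circ''_\mu z'')]''.$$
Each identity then reduces to the sum of the corresponding identities in $P'$ and $P''$, which hold by hypothesis. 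The BiHom-Lie part alone is exactly (\cite{T}, Proposition 2.3), so I can simply invoke it.

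The only genuinely delicate step is bookkeeping the parity signs and the $\lambda$-brackets with $-\lambda-\partial$, in particular in the Jacobi and Leibniz terms. This is settled by the homogeneity observation above, namely that the parity of $x'$ and $x''$ both equal $|x|$. Hence no further computation is required.
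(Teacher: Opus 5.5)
Your proposal is correct. The paper states this theorem without proof, and your componentwise verification is exactly the routine check it leaves implicit: take the grading $P_i=P'_i\oplus P''_i$ so the components of a homogeneous element share its parity, note that mixed products vanish and every structure map and $\partial$ acts componentwise, and then each axiom splits into the corresponding axioms of $P'$ and $P''$. You also correctly read the typos in the statement as $\varphi=\varphi'+\varphi''$ and $x''\circ''_\lambda y''$ in place of $y'\circ''_\lambda y''$.
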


Whenever $G=\{e\}$, we recover
\begin{cor}(\cite{S}, Lemma 1)\\
 Let $(P', \circ'_\lambda, [-_\lambda,-]', \varphi', \psi')$ and $(P'', \circ''_\lambda, [-_\lambda,-]'', \varphi'', \psi''´)$ be two BiHom-Poisson conformal algebras. Then,
 $$P=(P'\oplus P'', \circ_\lambda, [ -_\lambda -], \varphi, \psi)$$ is a BiHom-Poisson conformal algebra with :
 \begin{eqnarray}
 \partial &=&\partial'\oplus\partial'',\\
 \varphi=(\varphi''+\varphi'')(x''+x'')&=&\varphi'(x')+\varphi''(x''),\\
 \psi=(\psi'+\psi'')(x'+x'')&=&\psi'(x')+\psi''(x''),\\
 {(x'+x'')\circ_\lambda(y'+y'')}&:=& x'\circ'_\lambda y' +
 y'\circ''_\lambda y'',\\
  {}[(x'+x'')_\lambda(y'+y'')]&:=&[x'_\lambda y']'+[x''_\lambda y'']'',
 \end{eqnarray}
for any $x', y'\in P', x'', y''\in P''$.
\end{cor}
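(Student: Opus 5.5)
The statement is the last Corollary, the $G=\{e\}$ specialisation of the direct-sum theorem. I would deduce it from that theorem by restricting to purely even spaces. A BiHom-Poisson conformal algebra is a BiHom-Poisson conformal superalgebra whose odd part is zero. In that case $|x|=0$ for every element, every sign $(-1)^{|x||y|}$ is $1$, and $\mathcal{H}(P)=P$. So the plan is: regard $P'$ and $P''$ as superalgebras concentrated in degree $0$, apply the preceding theorem to get that $P'\oplus P''$ is a BiHom-Poisson conformal superalgebra, and note that $P'\oplus P''$ is again purely even. Its axioms, with all signs trivial, are exactly those of a BiHom-Poisson conformal algebra.

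To make this self-contained, I would also sketch why the preceding theorem holds. I read the product as $(x'+x'')\circ_\lambda(y'+y'')=x'\circ'_\lambda y'+x''\circ''_\lambda y''$, correcting the evident typo. Likewise $\varphi=\varphi'\oplus\varphi''$, $\psi=\psi'\oplus\psi''$ and $\partial=\partial'\oplus\partial''$. The checks proceed in order.

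\begin{enumerate}
\item Sesquilinearity and the commutation $\partial\varphi=\varphi\partial$, $\partial\psi=\psi\partial$ hold componentwise.
\item BiHom-associativity holds because every term splits as a $P'$-component plus a $P''$-component. Each component is the corresponding identity in $P'$ or $P''$; the mixed terms vanish since $P'\circ_\lambda P''=0$.
\item The BiHom-Lie conformal structure is given by Proposition 2.3 of \cite{T} as quoted above.
\item The conformal BiHom-Leibniz identity $[\varphi\psi(x)_\lambda(y\circ_\mu z)]=[\psi(x)_\lambda y]\circ_{\lambda+\mu}\psi(z)+\psi(y)\circ_\mu[\varphi(x)_\lambda z]$ splits into its $P'$ and $P''$ projections. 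Each projection is the Leibniz identity of $P'$ or $P''$.
\end{enumerate}

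The only point needing care is the Leibniz step. There one must check that no cross terms survive. This holds because both operations are block-diagonal and the structure maps preserve the summands, so every term is a sum of a pure $P'$ expression and a pure $P''$ expression. I do not expect a genuine obstacle: the result is a formal specialisation, and the proof amounts to observing that the sign factors disappear when the odd parts are trivial.
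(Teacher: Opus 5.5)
Your proposal is correct and follows the paper's route. The paper obtains this corollary by specializing the preceding direct-sum theorem for superalgebras to the trivially graded case (its ``$G=\{e\}$''), which is what you do by taking the odd parts to be zero. Your componentwise sketch of that theorem reads the typo $y'\circ''_\lambda y''$ as $x''\circ''_\lambda y''$. It supplies a verification the paper states without proof, and it is sound because all operations are block-diagonal and the structure maps preserve the summands.
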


In the rest of this subsection, we consider classical commutativity instead of the BiHom-commutativity. And, we need the following lemmas for the next theorem.
\begin{lem}\label{bhca1}
In any commutative BiHom-associative  superalgebra $(A, \circ, \varphi, \psi)$, we have
\begin{enumerate}
 \item [1)] $\psi^2(a)\circ(\psi(b)\circ\varphi(c))=\psi^2(a)\circ(\psi(b)\circ\psi(c)),$
 \item [2)] $\psi^2(b)\circ(\psi(c)\circ\varphi(a))=(-1)^{(|b|+|c|)|a|}\psi^2(a)\circ(\psi(b)\circ\psi(c)),$
\item [3)]$\psi^2(c)\circ(\psi(a)\circ\varphi(b))=(-1)^{(|a|+|b|)|c|}\psi^2(a)\circ(\psi(b)\circ\psi(c)),$
\end{enumerate}
for any $a, b, c\in\mathcal{H}(A)$.
\end{lem}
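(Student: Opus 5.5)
The plan is to derive everything from two identities: the (classical, graded) commutativity $u\circ v=(-1)^{|u||v|}v\circ u$ and BiHom-associativity $\varphi(u)\circ(v\circ w)=(u\circ v)\circ\psi(w)$. I will also use that $\varphi,\psi$ commute and are even, and multiplicativity of $\varphi,\psi$ if needed. I would prove item 1) first, since 2) and 3) should reduce to it by cyclically permuting $(a,b,c)$ and tracking signs.

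For 1), I would start from $\psi^2(a)\circ(\psi(b)\circ\varphi(c))$ and move the whole expression into the form $(\,\cdot\,\circ\,\cdot\,)\circ\psi(\cdot)$, where BiHom-associativity can be applied. Concretely, I would commute the outer product to get $\pm(\psi(b)\circ\varphi(c))\circ\psi(\psi(a))$. I would then apply associativity from right to left to obtain $\pm\varphi\psi(b)\circ(\varphi(c)\circ\psi(a))$. Commuting the inner factor gives $\pm\varphi\psi(b)\circ(\psi(a)\circ\varphi(c))$, and associativity again gives $\pm(\psi(b)\circ\psi(a))\circ\psi\varphi(c)$. Iterating this "commute, reassociate" cycle permutes the roles of $a,b,c$ and redistributes the twists $\varphi,\psi$ among the three slots. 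The aim is to find a closed cycle of such moves in which the twist carried by $c$ changes from $\varphi$ to $\psi$ while $a$ returns to the slot with $\psi^2$.

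The signs should be bookkept at each commutation. The total sign must come out trivial for 1), since the same elements end in the same order. Once 1) holds, I would prove 2) by writing $\psi^2(b)\circ(\psi(c)\circ\varphi(a))$, applying 1) with $(a,b,c)$ replaced by $(b,c,a)$ to get $\psi^2(b)\circ(\psi(c)\circ\psi(a))$, and then using the same commute/reassociate moves to rotate this to $\psi^2(a)\circ(\psi(b)\circ\psi(c))$. Each move of $a$ past $b$ and $c$ contributes the sign $(-1)^{(|b|+|c|)|a|}$. Item 3) is the same argument with the cyclic permutation $(c,a,b)$, giving the sign $(-1)^{(|a|+|b|)|c|}$.

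The main obstacle is item 1) itself. The move cycle described above preserves the total number of twists in each position pattern, so it is not clear that it can convert a $\varphi$ on $c$ into a $\psi$ without some additional relation. I would first try to close the cycle using both readings of associativity together with multiplicativity, e.g.\ $\psi(u\circ v)=\psi(u)\circ\psi(v)$, to absorb a twist from one factor into the product. If this fails, I expect the identity to require an extra hypothesis, such as $\varphi=\psi$ on products or the regularity used elsewhere in the paper. In that case I would state precisely which condition is needed and verify 1) under it.
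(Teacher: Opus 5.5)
\textbf{The gap is item 1), on which the rest of your argument depends.} Your first four moves are correct; they coincide with the first four steps of the paper's chain and bring you to $(-1)^{|a||b|}(\psi(b)\circ\psi(a))\circ\psi\varphi(c)$. At that point you stop and conjecture that multiplicativity, regularity, or $\varphi=\psi$ on products might be needed. You reduce 2) and 3) to 1), so as written the proposal establishes none of the three identities.

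The obstruction you cite does not exist. BiHom-associativity $\varphi(x)\circ(y\circ z)=(x\circ y)\circ\psi(z)$ removes a $\varphi$ from one factor and places a $\psi$ on another, or the reverse. So the only invariant is the \emph{total} number of twists, not how they split between $\varphi$ and $\psi$, and both sides of 1) carry four twists. Your four moves use associativity once in each direction, which is why the $\varphi$ has not yet disappeared. One more commutation followed by a left-to-right application finishes:
\[
\begin{aligned}
(-1)^{|a||b|}(\psi(b)\circ\psi(a))\circ\psi\varphi(c)
&=(-1)^{|a||b|+(|a|+|b|)|c|}\,\varphi\psi(c)\circ(\psi(b)\circ\psi(a))\\
&=(-1)^{|a||b|+(|a|+|b|)|c|}\,(\psi(c)\circ\psi(b))\circ\psi^2(a)\\
&=\psi^2(a)\circ(\psi(b)\circ\psi(c)).
\end{aligned}
\]
The second step takes $x=\psi(c)$, $y=\psi(b)$, $z=\psi(a)$. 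The final commutations contribute $(-1)^{|a||b|+|a||c|+|b||c|}$, which cancels the accumulated sign. This is exactly the paper's proof. It uses only graded commutativity, BiHom-associativity, $\varphi\psi=\psi\varphi$ and the evenness of the twists, so no extra hypothesis is needed. Adding one would also weaken the tensor-product theorem that relies on this lemma for an arbitrary commutative BiHom-associative superalgebra.

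With 1) in hand, your treatment of 2) and 3) is sound. Apply 1) to $(b,c,a)$, respectively $(c,a,b)$, and then rotate. The rotation again needs one associativity step in each direction, for example
\[
\begin{aligned}
\psi^2(b)\circ(\psi(c)\circ\psi(a))
&=(-1)^{|b|(|a|+|c|)}\,\varphi\psi(c)\circ(\psi(a)\circ\psi(b))\\
&=(-1)^{|b||c|}\,(\psi(c)\circ\psi(b))\circ\psi^2(a)\\
&=(-1)^{|a|(|b|+|c|)}\,\psi^2(a)\circ(\psi(b)\circ\psi(c)).
\end{aligned}
\]
You should write this chain out rather than just asserting the sign. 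The paper does not cite 1) for items 2) and 3); it writes them as longer direct commute/reassociate chains of the same kind. That is only a difference in bookkeeping.
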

\begin{proof}
\begin{enumerate}
 \item [1)] For any $a, b, c\in \mathcal{H}(A)$, we have :
\begin{eqnarray}
 \psi^2(a)\circ(\psi(b)\circ\varphi(c))&=&(-1)^{|a|(|b|+|c|)}(\psi(b)\circ\varphi(c))\circ\psi^2(a)=(-1)^{|a|(|b|+|c|)}\varphi\psi(b)\circ(\varphi(c)\circ\psi(a))\nonumber\\
&=&(-1)^{|a||b|}\varphi\psi(b)\circ(\psi(a)\circ\varphi(c))=(-1)^{|a||b|}(\psi(b)\circ\psi(a))\circ\psi\varphi(c)\nonumber\\
&=&(-1)^{|a||b|+(|a|+|b|)|c|}\varphi\psi(c)\circ(\psi(b)\circ\psi(a))\nonumber\\
&=&(-1)^{|a||b|+(|a|+|b|)|c|}(\psi(c)\circ\psi(b))\circ\psi^2(a)\nonumber\\
&=&\psi^2(a)\circ(\psi(b)\circ\psi(c))\nonumber.
\end{eqnarray}
\item [2)] For any $a, b, c\in \mathcal{H}(A)$, we have :
\begin{eqnarray}
 \psi^2(b)\circ(\psi(c)\circ\varphi(a))
&=&(-1)^{(|a|+|c|)|b|}(\psi(c)\circ\varphi(a))\circ\psi^2(b)
=(-1)^{(|a|+|c|)|b|}\varphi\psi(c)\circ(\varphi(a)\circ\psi(b))\nonumber\\
&=&(-1)^{|b||c|}\varphi\psi(c)\circ(\psi(b)\circ\varphi(a))
=(-1)^{|b||c|}(\psi(c)\circ\psi(b))\circ\psi\varphi(a)\nonumber\\
&=&(-1)^{|b||c|+(|b|+|c|)|a|}\varphi\psi(a)\circ(\psi(c)\circ\psi(b))\nonumber\\
&=&(-1)^{|b||c|+(|b|+|c|)|a|}(\psi(a)\circ\psi(c))\circ\psi^2(b)\nonumber\\
&=&(-1)^{(|a|+|c|)|b|}(\psi(c)\circ\psi(a))\circ\psi^2(b)
=(-1)^{(|a|+|c|)|b|}\varphi\psi(c)\circ(\psi(a)\circ\psi(b))\nonumber\\
&=&(-1)^{|b||c|}\varphi\psi(c)\circ(\psi(b)\circ\psi(a))
=(-1)^{|b||c|}(\psi(c)\circ\psi(b))\circ\psi^2(a)\nonumber\\
&=&(-1)^{(|b|+|c|)|a|}\psi^2(a)\circ(\psi(b)\circ\psi(c))\nonumber\\
&=&(-1)^{(|b|+|c|)|a|}\psi^2(a)\circ(\psi(b)\circ\psi(c))\nonumber.
\end{eqnarray}
 \item [3)] For any $a, b, c\in \mathcal{H}(A)$, we have :
\begin{eqnarray}
 \psi^2(c)\circ(\psi(a)\circ\varphi(b))&=&(-1)^{(|a|+|b|)|c|}(\psi(a)\circ\varphi(b))\circ\psi^2(c)=(-1)^{(|a|+|b|)|c|}\varphi\psi(a)\circ(\varphi(b)\circ\psi(c))\nonumber\\
&=&(-1)^{|a||c|}\varphi\psi(a)\circ(\psi(c)\circ\varphi(b))=(-1)^{|a||c|}(\psi(a)\circ\psi(c))\circ\psi\varphi(b)\nonumber\\
&=&(-1)^{(|a|+|c|)|b|+|a||c|}\varphi\psi(b)\circ(\psi(a)\circ\psi(c))=(-1)^{(|a|+|c|)|b|}\varphi\psi(b)\circ(\psi(c)\circ\psi(a)) \nonumber\\
&=&(-1)^{(|a|+|c|)|b|}(\psi(b)\circ\psi(c))\circ\psi^2(a)\nonumber\\
&=&(-1)^{(|a|+|b|)|c|}\psi^2(a)\circ(\psi(b)\circ\psi(c))\nonumber.
\end{eqnarray}
\end{enumerate}
This achieves the proof.
\end{proof}
\begin{lem}\label{bhca2}
If $(A, \circ, \varphi, \psi)$ is a commutative BiHom-associative superalgebra, we have
\begin{eqnarray}
 \varphi\psi(a)\circ (b\circ c)= (-1)^{|a||b|}\psi(b)\circ(\varphi(a)\circ c).
\end{eqnarray}
\end{lem}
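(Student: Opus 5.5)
The plan is to obtain the identity by alternating the (classical) super-commutativity $u\circ v=(-1)^{|u||v|}v\circ u$ with the BiHom-associativity $\varphi(x)\circ(y\circ z)=(x\circ y)\circ\psi(z)$, in the same spirit as the chains of equalities in Lemma \ref{bhca1}.

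First I will apply BiHom-associativity to the left-hand side, viewing $\varphi\psi(a)$ as $\varphi(x)$ with $x=\psi(a)$:
\[
\varphi\psi(a)\circ(b\circ c)=(\psi(a)\circ b)\circ\psi(c).
\]
Next I will commute the inner factors, $\psi(a)\circ b=(-1)^{|a||b|}b\circ\psi(a)$ (all structure maps are even, so parities are unchanged). Reading BiHom-associativity backwards then gives
\[
\varphi\psi(a)\circ(b\circ c)=(-1)^{|a||b|}\varphi(b)\circ(\psi(a)\circ c).
\]
The sign $(-1)^{|a||b|}$ already agrees with the one in the statement. What remains is to show that the roles of $\varphi$ and $\psi$ on the outer and middle factors can be interchanged, i.e. $\varphi(b)\circ(\psi(a)\circ c)=\psi(b)\circ(\varphi(a)\circ c)$.

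This interchange is the step I expect to be the main obstacle. BiHom-associativity only lets $\varphi$ enter on the leftmost slot and $\psi$ on the rightmost slot, so the exchange has to be produced by moving each factor to an outer position using commutativity and then re-associating. I will handle the right-hand side $\psi(b)\circ(\varphi(a)\circ c)$ in the same way as in part 1) of Lemma \ref{bhca1}:
\begin{enumerate}
\item commute the outer product to get $(-1)^{|b|(|a|+|c|)}(\varphi(a)\circ c)\circ\psi(b)$;
\item re-associate this to $(-1)^{|b|(|a|+|c|)}\varphi^2(a)\circ(c\circ b)$;
\item continue commuting and re-associating until the expression has the form $\varphi(\,\cdot\,)\circ(\,\cdot\circ\cdot\,)$, which can be compared with the expression obtained from the left-hand side.
\end{enumerate}

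If the direct chain does not close up, I will apply both sides with $a,b,c$ replaced by $\psi(a),\psi(b),\psi(c)$ and use the normal form $\psi^2(a)\circ(\psi(b)\circ\psi(c))$ from Lemma \ref{bhca1}. Parts 1)--3) of that lemma show that any placement of one $\varphi$ and the $\psi$'s inside such a triple product collapses to this normal form, up to the Koszul sign. I would therefore aim to prove the identity first on such arguments, and then argue that this suffices; for example, in the regular case one can remove the extra $\psi$'s by applying $\psi^{-1}$ and using that $\psi$ is multiplicative.

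Throughout, I will track the Koszul signs. Each commutation of homogeneous $u,v$ contributes $(-1)^{|u||v|}$, and the final sign should reduce to $(-1)^{|a||b|}$ after the parities of $c$ cancel in pairs, as they do in Lemma \ref{bhca1}.
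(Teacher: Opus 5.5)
Your opening reduction is correct. The two moves give $\varphi\psi(a)\circ(b\circ c)=(-1)^{|a||b|}\varphi(b)\circ(\psi(a)\circ c)$. Your steps (1)--(2) plus one inner commutation give $\psi(b)\circ(\varphi(a)\circ c)=(-1)^{|a||b|}\varphi^2(a)\circ(b\circ c)$. Hence the lemma is equivalent to $\varphi\psi(a)\circ(b\circ c)=\varphi^2(a)\circ(b\circ c)$, and this is exactly what step (3) leaves unproved. Its stopping rule (reach the shape $\varphi(\cdot)\circ(\cdot\circ\cdot)$, then compare) is already met after step (2), and that comparison is the whole content of the lemma.

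The missing idea is to strip the $\psi$ from $a$ rather than the $\varphi$. Your first move re-associates with $\varphi\psi(a)$ in the left slot, which consumes the $\varphi$ and leaves $\psi(a)$ stuck in the middle. Instead, commute $\varphi\psi(a)=\psi(\varphi(a))$ to the right end, where re-association consumes the $\psi$. Then bring $\varphi(a)$ back to the left, where it picks up a $\varphi$:
\begin{align*}
\varphi\psi(a)\circ(b\circ c)
&=(-1)^{|a|(|b|+|c|)}(b\circ c)\circ\psi(\varphi(a))
=(-1)^{|a|(|b|+|c|)}\varphi(b)\circ(c\circ\varphi(a))\\
&=(-1)^{|a||b|}\varphi(b)\circ(\varphi(a)\circ c)
=(-1)^{|a||b|}(b\circ\varphi(a))\circ\psi(c)\\
&=(\varphi(a)\circ b)\circ\psi(c)=\varphi^2(a)\circ(b\circ c).
\end{align*}
Together with your steps (1)--(2) this finishes the proof; it is essentially the paper's chain.

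Your fallback does not close the gap. Lemma~\ref{bhca1} only treats products of the shape $\psi^2(\cdot)\circ(\psi(\cdot)\circ\varphi(\cdot))$, so it does not apply directly to the substituted left-hand side $\varphi\psi^2(a)\circ(\psi(b)\circ\psi(c))$. This can be repaired. One re-association and a commutation turn that expression into $\pm\psi^2(c)\circ(\psi^2(a)\circ\psi(b))$, and parts 1)--2) make $\psi^2(x)\circ(\psi(y)\circ\psi(z))$ totally symmetric up to Koszul signs. But the result is the identity only for arguments in $\psi(A)$. Returning to arbitrary $a,b,c$ needs $\psi$ surjective, or injective and multiplicative, and the lemma assumes neither. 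So as written your argument proves only the regular case.
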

\begin{proof}
For any $a, b, c\in \mathcal{H}(A)$, we have :
\begin{eqnarray}
 \varphi\psi(a)\circ(b\circ c)&=&(-1)^{|a|(|b|+|c|)}(b\circ c)\circ\psi\varphi(a)=(-1)^{|a|(|b|+|c|)}\varphi(b)\circ(c\circ\varphi(a))\nonumber\\
&=&(-1)^{|a||b|}\varphi(b)\circ(\varphi(a)\circ c)
=(-1)^{|a||b|}(b\circ\varphi(a))\circ\psi(c)=(\varphi(a)\circ b)\circ\psi(c)\nonumber\\
&=&\varphi^2(a)\circ(b\circ c)=(-1)^{|b||c|}\varphi^2(a)\circ (c\circ b)\nonumber\\
&=&(-1)^{|b||c|}(\varphi(a)\circ c)\circ\psi(b)\nonumber\\
&=&(-1)^{|a||b|}\psi(b)\circ(\varphi(a)\circ c)\nonumber.
\end{eqnarray}
This ends the proof.
\end{proof}

The below result gives an extension of a  BiHom-Poisson color algebra by a commutative BiHom-associative color algebra.
\begin{thm}\label{tens}
 Let $(A, \circ, \varphi, \psi)$  be a commutative BiHom-associative superalgebra and $(P, \circ'_\lambda, [-_\lambda,-]', \varphi', \psi')$ be a  BiHom-Poisson conformal superalgebra.
Then, the tensor product $A\otimes P$ endowed with the even (bi)linear maps
\begin{eqnarray}
\partial&:&=Id\otimes\partial : A\otimes P\rightarrow A\otimes P,\quad a\otimes x\mapsto a\otimes\partial(x)\nonumber\\
\Phi &:&=\varphi\otimes \varphi' : A\otimes P\rightarrow A\otimes P,\quad a\otimes x\mapsto\varphi(a)\otimes\varphi'(x), \nonumber\\
\Psi &:&=\psi\otimes \psi' : A\otimes P\rightarrow A\otimes P,\quad a\otimes x\mapsto\psi(a)\otimes\psi'(x) ,\nonumber\\
\bullet_\lambda &:& (A\otimes P)\times (A\otimes P)\rightarrow A\otimes P,
\quad (a\otimes x, b\otimes y)\mapsto(-1)^{|b||x|}a\circ b\otimes x\circ'_\lambda y,\nonumber\\
\ldb-_\lambda -\rdb &:& (A\otimes P)\times (A\otimes P)\rightarrow A\otimes P, \quad (a\otimes x, b\otimes y)\mapsto(-1)^{|b||x|}(a\circ b)\otimes [x_\lambda y]',\nonumber
\end{eqnarray}
for all $a, b \in\mathcal{H}(A)$, $x, y\in \mathcal{H}(P)$,
is a  BiHom-Poisson conformal algebra.
\end{thm}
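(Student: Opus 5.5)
We verify the three groups of axioms for $(A\otimes P, \bullet_\lambda, \ldb-_\lambda-\rdb, \Phi, \Psi)$ directly on homogeneous pure tensors $a\otimes x$, $b\otimes y$, $c\otimes z$, with $|a\otimes x|=|a|+|x|$. Throughout, the $A$-factor is handled by commutativity and Lemmas \ref{bhca1} and \ref{bhca2}, and the $P$-factor by the corresponding BiHom-Poisson conformal axiom of $P$.

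\textbf{Structural facts.} $\Phi$ and $\Psi$ are even and commute, since $\varphi\psi=\psi\varphi$ and $\varphi'\psi'=\psi'\varphi'$. They commute with $\partial=Id\otimes\partial$ because $\varphi',\psi'$ commute with $\partial$ on $P$. Sesquilinearity of $\bullet_\lambda$ and of $\ldb-_\lambda-\rdb$ is inherited from $\circ'_\lambda$ and $[-_\lambda-]'$, since $\partial$ acts only on the second factor and the Koszul sign $(-1)^{|b||x|}$ is unchanged by $\partial$.

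\textbf{BiHom-associativity of $\bullet_\lambda$.} Expanding both sides gives
\begin{eqnarray}
\Phi(a\otimes x)\bullet_\lambda\big((b\otimes y)\bullet_\mu(c\otimes z)\big)&=&\pm\,\varphi(a)\circ(b\circ c)\otimes\varphi'(x)\circ'_\lambda(y\circ'_\mu z),\nonumber\\
\big((a\otimes x)\bullet_\lambda(b\otimes y)\big)\bullet_{\lambda+\mu}\Psi(c\otimes z)&=&\pm\,(a\circ b)\circ\psi(c)\otimes(x\circ'_\lambda y)\circ'_{\lambda+\mu}\psi'(z).\nonumber
\end{eqnarray}
In both, the sign is $(-1)^{|b||x|+|c|(|x|+|y|)}$. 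BiHom-associativity in $A$ and in $P$ then gives equality, so no lemma is needed here.

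\textbf{Skew-symmetry of the bracket.} $\ldb\Psi(a\otimes x)_\lambda\Phi(b\otimes y)\rdb$ equals $(-1)^{|b||x|}\psi(a)\circ\varphi(b)\otimes[\psi'(x)_\lambda\varphi'(y)]'$. On the $P$-factor we apply (\ref{sks}). On the $A$-factor we need $\psi(a)\circ\varphi(b)=(-1)^{|a||b|}\psi(b)\circ\varphi(a)$. This does not follow from classical commutativity alone, which only yields $\varphi(b)\circ\psi(a)$. I would therefore first try to derive the needed swap from Lemmas \ref{bhca1}--\ref{bhca2}. Failing that, I would read skew-symmetry in the weakened sense where the $\psi^2$-twisted forms agree, as in Lemma \ref{bhca1}. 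This identity is the point to check carefully. Granting it, the signs combine to $-(-1)^{(|a|+|x|)(|b|+|y|)}$ as required.

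\textbf{BiHom-Jacobi identity.} Each of the three cyclic terms factors as a sign times $A$-part $\otimes$ $P$-part. The $A$-parts are $\psi^2(a)\circ(\psi(b)\circ\varphi(c))$ and its cyclic permutations, and Lemma \ref{bhca1} 1)--3) rewrites all three as the common element $\psi^2(a)\circ(\psi(b)\circ\psi(c))$, up to explicit signs. After this, the three $A$-factors coincide, and the sum becomes that common element tensored with the $P$-Jacobi sum (\ref{bhj}), which vanishes. The main obstacle is the sign bookkeeping: one must check that the Koszul signs from the definitions, combined with the signs of Lemma \ref{bhca1}, reproduce exactly the coefficients $(-1)^{|X||Z|}$, $(-1)^{|X||Y|}$, $(-1)^{|Y||Z|}$ of (\ref{bhj}) for $X=a\otimes x$, $Y=b\otimes y$, $Z=c\otimes z$. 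I would do this by tracking parities of $a,b,c$ and of $x,y,z$ separately; the mixed terms should cancel pairwise.

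\textbf{Leibniz identity.} The left side $\ldb\Phi\Psi(a\otimes x)_\lambda((b\otimes y)\bullet_\mu(c\otimes z))\rdb$ has $A$-part $\varphi\psi(a)\circ(b\circ c)$. On the right, the first term has $A$-part $(\psi(a)\circ b)\circ\psi(c)=\varphi\psi(a)\circ(b\circ c)$ by BiHom-associativity. The second term has $A$-part $\psi(b)\circ(\varphi(a)\circ c)$, which Lemma \ref{bhca2} converts to $(-1)^{|a||b|}\varphi\psi(a)\circ(b\circ c)$. Factoring out the common $A$-element, the identity reduces to the conformal BiHom-Leibniz identity in $P$. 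It remains to check that the signs, including the $(-1)^{|a||b|}$ from the lemma, reassemble into $(-1)^{|X||Y|}$.
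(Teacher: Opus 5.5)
You follow the paper's proof for BiHom-associativity, for the BiHom-Jacobi identity via Lemma \ref{bhca1}, and for the Leibniz rule via BiHom-associativity and Lemma \ref{bhca2}. The sign bookkeeping you defer does close. Write $X=a\otimes x$, $Y=b\otimes y$, $Z=c\otimes z$. Each term of (\ref{bhj}) in $A\otimes P$, with its coefficient, equals $(-1)^{|a||c|+|a||z|+|b||x|+|c||y|}\,\psi^2(a)\circ(\psi(b)\circ\psi(c))$ tensored with the corresponding signed term of (\ref{bhj}) in $P$. In the Leibniz rule, both sides become $(-1)^{|b||x|+|c||x|+|c||y|}\,\varphi\psi(a)\circ(b\circ c)$ tensored with the corresponding side of the Leibniz rule in $P$.

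The genuine gap is skew-symmetry, and it cannot be closed, because the statement is false as written. You correctly reduced (\ref{sks}) for $A\otimes P$ to $\psi(a)\circ\varphi(b)=(-1)^{|a||b|}\psi(b)\circ\varphi(a)$. With classical commutativity this is equivalent to $\psi(a)\circ\varphi(b)=\varphi(a)\circ\psi(b)$.

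Lemmas \ref{bhca1} and \ref{bhca2} only constrain triple products with a twisted outer factor, so they cannot produce this binary identity. Your ``weakened sense'' reading is not the axiom (\ref{sks}) either.

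Here is a counterexample. Let $A$ be purely even with basis $e_1,e_2,e_3$. Set $e_1\circ e_2=e_2\circ e_1=e_3$ and all other products of basis vectors zero. Take $\varphi=Id$ and $\psi(e_1)=e_1$, $\psi(e_2)=2e_2$, $\psi(e_3)=2e_3$.
\begin{itemize}
\item All triple products vanish, so $A$ is commutative and BiHom-associative, and both lemmas hold trivially. It is even multiplicative and regular.
\item Yet $\psi(e_1)\circ\varphi(e_2)=e_3\neq 2e_3=\psi(e_2)\circ\varphi(e_1)$.
\item Take for $P$ the Virasoro conformal algebra $\mathbb{C}[\partial]L$ with $[L_\lambda L]=(\partial+2\lambda)L$, zero product and identity twists. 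Then (\ref{sks}) for $e_1\otimes L$, $e_2\otimes L$ would read $e_3\otimes(\partial+2\lambda)L=2e_3\otimes(\partial+2\lambda)L$, which is false.
\end{itemize}

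The paper's proof has the same hole, since it never checks skew-symmetry. The repair is to add the hypothesis $\psi(a)\circ\varphi(b)=\varphi(a)\circ\psi(b)$ for all $a,b\in A$, which is automatic when $\varphi=\psi$. With it, your sign computation gives skew-symmetry, and the remaining steps go through unchanged.
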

\begin{proof}
It is easy to prove the conformal BiHom-associativity.
For the BiHom-Jacobi conformal identity, we have successively, for any $a, b, c\in \mathcal{H}(A)$ and $x, y, z\in \mathcal{H}(P)$,
\begin{eqnarray}
 &&\hspace{2cm}\ldb\Psi^2(a\otimes x)_\lambda \ldb\Psi(b\otimes y)_\mu \Phi(c\otimes z)\rdb\rdb=\nonumber\\
&&=(-1)^{|y||c|}\ldb(\psi^2(a)\otimes\psi'^2(x))_\lambda (\psi(b)\varphi(c)\otimes[\psi'(y)_\mu \varphi'(z)]')\rdb\nonumber\\
&&=(-1)^{|y||c|+|x|(|b|+|c|)}\psi^2(a)(\psi(b)\varphi(c))\otimes[\psi'^2(x)_\lambda [\psi'(y)_\mu \varphi'(z)]']'\nonumber,
\end{eqnarray}
\begin{eqnarray}
 &&\hspace{2cm}\ldb\Psi^2(b\otimes y)_\lambda \ldb\Psi(c\otimes z)_{-\lambda-\partial} \Phi(a\otimes x)\rdb\rdb=\nonumber\\
&&=(-1)^{|z||a|}\ldb(\psi^2(b)\otimes\psi'^2(y))_\lambda (\psi(c)\varphi(a)\otimes[\psi'(z)_{-\lambda-\partial} \varphi'(x)]')\rdb\nonumber\\
&&=(-1)^{|z||a|+|y|(|c|+|a|)}\psi^2(b)(\psi(c)\varphi(a))\otimes[\psi'^2(y)_\lambda [\psi'(z)_{-\lambda-\partial} \varphi'(x)]']'\nonumber
\end{eqnarray}
and
\begin{eqnarray}
 &&\hspace{2cm}\ldb\Psi^2(c\otimes z)_\lambda \ldb\Psi(a\otimes x)_{-\lambda-\mu\partial} \Phi(b\otimes y)\rdb\rdb=\nonumber\\
&&=(-1)^{|x||b|}\ldb(\psi^2(c)\otimes\psi'^2(z))_\lambda (\psi(a)\varphi(b)\otimes[\psi'(x)_{-\lambda-\mu-\partial} \varphi'(y)]')\rdb\nonumber\\
&&=(-1)^{|x||b|+|z|(|a|+|b|)}\psi^2(c)(\psi(a)\varphi(b))\otimes[\psi'^2(z)_\lambda [\psi'(a)_{-\lambda-\mu-\partial} \varphi'(y)]']'\nonumber,
\end{eqnarray}
The conclusion follows from \ref{bhca1}.

 Now, let us prove the BiHom-Leibniz identity.
\begin{eqnarray}
 &&\hspace{2cm}\ldb\Psi\Phi(a\otimes x)_\lambda (b\otimes y)\bullet_\mu(c\otimes z)\rdb=\nonumber\\
&&=(-1)^{|y||c|}\ldb(\varphi\psi(a)\otimes\psi'\varphi'(x))_\lambda (b\circ c\otimes y\circ_\mu' z)\rdb\nonumber\\
&&=(-1)^{|y||c|+|x|(|b|+|c|)}\varphi\psi(a)\circ(b\circ c)\otimes[\psi'\varphi'(x)_\lambda (y\circ_\mu' z)]'\nonumber\\
&&=(-1)^{|y||c|+|x|(|b|+|c|)}\varphi\psi(a)\circ(b\circ c)\otimes\Big([\psi'(x)_\lambda y]'_{\lambda+\mu}\psi'(z)+(-1)^{|x||y|}\psi'(y)\circ_\mu[\varphi'(x)_\lambda z]')\nonumber\\
&&=(-1)^{|y||c|+|x||b|+|x||c|}\varphi\psi(a)\circ(b\circ c)\otimes[\psi'(x)_\lambda y]'_{\lambda+\mu}\psi'(z)+\nonumber\\
&&\quad+(-1)^{|y||c|+(|x|(|b|+|c|)+|x||y|}\varphi\psi(a)\circ(b\circ c)\otimes\psi'(y)\circ_\mu[\varphi'(x)_\lambda z]'\nonumber.
\end{eqnarray}

By color BiHom-associativity and Lemma \ref{bhca2},
\begin{eqnarray}
 &&\hspace{2cm}\ldb\Psi\Phi(a\otimes x)_\lambda( (b\otimes y)\bullet_\mu(c\otimes z))\rdb\nonumber\\
&&=(-1)^{|x||b|}(\psi(a)\circ b\otimes[\psi'(x)_\lambda y]'_{\lambda+\mu})(\psi(c)\otimes\psi'(z))\nonumber\\
&&\quad+(-1)^{|y||c|+|x|(|b|+|c|)+|x||y|+|a||b|}\psi(b)\circ(\varphi(a)\circ c)\otimes\psi'(y)\circ'_\mu[\varphi'(x)_\lambda z]'\nonumber\\
&&=(-1)^{|x||b|}(\psi(a)\circ b\otimes[\psi'(x), y]'_{\lambda+\mu})(\psi(c)\otimes\psi'(z))\nonumber\\
&&\quad+(-1)^{|x||c|+|x||b|+|x||y|+|a||b|}(-1)^{|a||y|}(\psi(b)\otimes\psi'(y))(\varphi(a)c\otimes[\varphi'(x)_\lambda z]')\nonumber\\
&&=\ldb\psi(a)\otimes\psi'(x)_\lambda b\otimes y\rdb_{\lambda+\mu}(\psi(c)\otimes\psi'(z))\nonumber\\
&&\quad+(-1)^{(|a|+|x|)(|b|+|y|)}(\psi(b)\otimes\psi'(y))\circ_\mu' \ldb(\varphi(a)\otimes\varphi'(x))_\lambda c\otimes z\rdb\nonumber\\
&&=\ldb\Psi(a\otimes x)_\lambda b\otimes y\rdb\circ'_{\lambda+\mu}\Psi(c\otimes z)
+(-1)^{(|a|+|x|)(|b|+|y|)}\Psi(b\otimes y)\circ'_\mu \ldb\Phi(a\otimes x)_\lambda c\otimes z\rdb.\nonumber
\end{eqnarray}
This achieves the proof.
\end{proof}

\begin{cor}
  Let $(A, \circ, \varphi)$  be a commutative Hom-associative
c superalgebra and $(P, \circ'_\lambda, [-_\lambda,-]', \varphi', \psi')$ be a  BiHom-Poisson conformal superalgebra.
Then, the tensor product $A\otimes P$ is a  BiHom-Poisson conformal superalgebra.
\end{cor}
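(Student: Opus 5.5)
The plan is to deduce the corollary from Theorem \ref{tens} by viewing a commutative Hom-associative superalgebra as a special commutative BiHom-associative superalgebra. Given $(A, \circ, \varphi)$ commutative and Hom-associative, i.e. $\varphi(a)\circ(b\circ c)=(a\circ b)\circ\varphi(c)$ and $a\circ b=(-1)^{|a||b|}b\circ a$, I set $\psi:=\varphi$. The quadruple $(A, \circ, \varphi, \varphi)$ then satisfies the BiHom-associativity $\varphi(a)\circ(b\circ c)=(a\circ b)\circ\psi(c)$ verbatim, and the two structure maps trivially commute. Commutativity in the classical sense is exactly the notion used in Lemmas \ref{bhca1} and \ref{bhca2} and in Theorem \ref{tens}. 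Hence $(A,\circ,\varphi,\varphi)$ is a commutative BiHom-associative superalgebra in the required sense.

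Next I apply Theorem \ref{tens} to the pair $(A,\circ,\varphi,\varphi)$ and $(P, \circ'_\lambda, [-_\lambda,-]', \varphi', \psi')$. This gives that $A\otimes P$ is a BiHom-Poisson conformal superalgebra with the following data:
\begin{itemize}
\item the derivation $\partial = Id\otimes\partial$;
\item the structure maps $\Phi=\varphi\otimes\varphi'$ and $\Psi=\varphi\otimes\psi'$;
\item the product $(a\otimes x)\bullet_\lambda(b\otimes y)=(-1)^{|b||x|}a\circ b\otimes x\circ'_\lambda y$;
\item the bracket $\ldb (a\otimes x)_\lambda (b\otimes y)\rdb=(-1)^{|b||x|}(a\circ b)\otimes[x_\lambda y]'$.
\end{itemize}
This is the asserted structure.

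The one point to check is that Theorem \ref{tens} really applies with $\psi=\varphi$. In particular, Lemmas \ref{bhca1} and \ref{bhca2}, used in its proof of the Jacobi and Leibniz identities, require only commutativity, BiHom-associativity and $\varphi\psi=\psi\varphi$, all of which hold here. I would also note that the $\mathbb{C}[\partial]$-module structure, sesquilinearity and the commutation $\partial\Phi=\Phi\partial$, $\partial\Psi=\Psi\partial$ are inherited factorwise, since $\partial$ acts only on the $P$ factor.

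No genuine obstacle is expected, since the corollary is a direct specialization. If anything needs care, it is only confirming that no hypothesis in Theorem \ref{tens} (for instance regularity or multiplicativity of $A$) was silently used. Inspecting its proof, only the identities of Lemmas \ref{bhca1} and \ref{bhca2} enter, so the specialization is safe.
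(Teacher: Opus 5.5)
Your proposal is correct and follows the paper's route exactly: the corollary is the specialization $\psi=\varphi$ of Theorem \ref{tens}, with $\Phi=\varphi\otimes\varphi'$ and $\Psi=\varphi\otimes\psi'$. One addition to your last paragraph: the proof of Theorem \ref{tens} never checks the skew-symmetry axiom (\ref{sks}), and it is precisely $\psi=\varphi$ that makes it hold. Indeed, $\varphi(a)\circ\varphi(b)=(-1)^{|a||b|}\varphi(b)\circ\varphi(a)$ together with skew-symmetry of $[-_\lambda-]'$ gives $\ldb\Psi(a\otimes x)_\lambda\Phi(b\otimes y)\rdb=-(-1)^{(|a|+|x|)(|b|+|y|)}\ldb\Psi(b\otimes y)_{-\lambda-\partial}\Phi(a\otimes x)\rdb$, whereas for $\psi\neq\varphi$ under classical commutativity this would require $\psi(a)\circ\varphi(b)=\varphi(a)\circ\psi(b)$, which can fail.
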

\begin{cor}
  Let $(A, \circ)$  be a commutative associative
 superalgebra and\\ $(P, \circ'_\lambda, [-_\lambda,-]', \varphi', \psi')$ be a  BiHom-Poisson conformal superalgebra.
Then, the tensor product $A\otimes P$ is a  BiHom-Poisson conformal superalgebra.
\end{cor}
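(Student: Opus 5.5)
The statement to prove is the last corollary: for a commutative associative superalgebra $(A,\circ)$ and a BiHom-Poisson conformal superalgebra $(P,\circ'_\lambda,[-_\lambda,-]',\varphi',\psi')$, the tensor product $A\otimes P$ is a BiHom-Poisson conformal superalgebra. My plan is to derive it as the special case $\varphi=\psi=\mathrm{Id}_A$ of Theorem \ref{tens}.

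The first step is to check that $(A,\circ,\mathrm{Id},\mathrm{Id})$ is a commutative BiHom-associative superalgebra. The identity maps are even and commute with each other. The BiHom-associativity $\varphi(a)\circ(b\circ c)=(a\circ b)\circ\psi(c)$ reduces, with $\varphi=\psi=\mathrm{Id}$, to ordinary associativity. The commutativity used in this subsection is the classical graded one, $a\circ b=(-1)^{|a||b|}b\circ a$, and this holds by assumption. So all hypotheses of Lemmas \ref{bhca1}, \ref{bhca2} and Theorem \ref{tens} are satisfied.

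The second step is to apply Theorem \ref{tens}. The structure maps on $A\otimes P$ become $\Phi=\mathrm{Id}\otimes\varphi'$ and $\Psi=\mathrm{Id}\otimes\psi'$, and $\partial=\mathrm{Id}\otimes\partial$. The operations are
\[
(a\otimes x)\bullet_\lambda(b\otimes y)=(-1)^{|b||x|}(a\circ b)\otimes x\circ'_\lambda y,\qquad
\ldb (a\otimes x)_\lambda (b\otimes y)\rdb=(-1)^{|b||x|}(a\circ b)\otimes[x_\lambda y]'.
\]
The theorem then gives the three required properties directly: conformal BiHom-associativity, the BiHom-Lie structure (sesquilinearity, skew-symmetry and the BiHom-Jacobi identity), and the BiHom-Leibniz identity.

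The main obstacle, if any, is that Theorem \ref{tens} is stated as producing a ``conformal algebra'' while its proof visibly keeps the super signs. It also leaves skew-symmetry implicit. I would verify skew-symmetry directly. In
\[
\ldb\Psi(a\otimes x)_\lambda\Phi(b\otimes y)\rdb=(-1)^{|b||x|}(a\circ b)\otimes[\psi'(x)_\lambda\varphi'(y)]',
\]
the classical graded commutativity of $A$ together with the skew-symmetry (\ref{sks}) of $[-_\lambda-]'$ produce the sign $-(-1)^{(|a|+|x|)(|b|+|y|)}$. This is exactly the skew-symmetry sign required for the homogeneous elements $a\otimes x$ and $b\otimes y$. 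Sesquilinearity follows since $\partial$ acts only on the $P$-factor. With these checks, the corollary follows in the super setting.
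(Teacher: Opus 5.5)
Your proposal is correct and follows the same route as the paper, which gives no separate proof and obtains the corollary by specializing Theorem \ref{tens} to $\varphi=\psi=\mathrm{Id}_A$, where graded commutativity and associativity make $(A,\circ,\mathrm{Id},\mathrm{Id})$ a commutative BiHom-associative superalgebra. Your explicit skew-symmetry check is a worthwhile addition: the paper's proof of Theorem \ref{tens} never verifies skew-symmetry, and only in this untwisted case does the needed identity $\psi(a)\circ\varphi(b)=(-1)^{|a||b|}\psi(b)\circ\varphi(a)$ reduce to plain graded commutativity.
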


{Perspectives :} According to other classical, conformal or Bi(Hom)-types structures, we may give constructions of BiHom-Poisson conformal superalgebras from either derivations, Reynolds operators or element of centroids.

% ------------------------------------------------------------------------
%GATHER{Xbib.bib}   % For Gather Purpose Only
%GATHER{Paper.bbl}  % For Gather Purpose Only

%\bibliographystyle{amsplain}
%\bibliography{reference}

\end{document}